\newtheorem{theorem}{Theorem}
\newtheorem{lemma}[theorem]{Lemma}
\newtheorem{proposition}[theorem]{Proposition}
\newtheorem{corollary}[theorem]{Corollary}
\newcommand{\proof}{\par\noindent\textbf{Proof. }}
\renewcommand{\Re}{\mathrm{Re}\,}
\newcommand{\C}{\ensuremath{\mathbb{C}}}
\newcommand{\F}{\ensuremath{\mathbb{F}}}
\newcommand{\J}{\ensuremath{\mathbb{J}}}
\newcommand{\N}{\ensuremath{\mathbb{N}}}
\newcommand{\Q}{\ensuremath{\mathbb{Q}}}
\newcommand{\R}{\ensuremath{\mathbb{R}}}
\newcommand{\T}{\ensuremath{\mathbb{T}}}
\newcommand{\Z}{\ensuremath{\mathbb{Z}}}
\newcounter{ste}
\newlength\largeurLabelsuite
\newenvironment{suite}%
{ \begin{list}%
     {\stepcounter{ste}\theste\phantom{i}$^o$}%
     {\setlength{\topsep}{0pt}%
       \setlength{\labelwidth}{-\largeurLabelsuite}%
       \setlength{\leftmargin}{0pt}%
       \setlength{\itemsep}{0pt}}%
   }%
   {\setcounter{ste}{0}\end{list}}
\newlength\largeurLabelListe
\newenvironment{Liste}
{ \begin{list}%
     {---}%
     {\setlength{\topsep}{0pt}%
       \setlength{\labelwidth}{-\largeurLabelListe}%
       \setlength{\leftmargin}{0pt}%
       \setlength{\itemsep}{0pt}}%
   }%
   { \end{list} }
\newlength\largeurLabelliste
\begin{document}

\title{Unchained polygons and the N-body problem}
\author{Alain Chenciner \& Jacques F{\'e}joz}
\maketitle

\begin{quote}
  \begin{flushright}
    \textit{To the memory of J. Moser, with admiration}
  \end{flushright}
\end{quote}

\begin{abstract}
  We study both theoretically and numerically the Lyapunov families
  which bifurcate in the vertical direction from a horizontal relative
  equilibrium in $\R^3$. As explained in \cite{CF1}, very symmetric
  relative equilibria thus give rise to some recently studied classes
  of periodic solutions.  We discuss the possibility of continuing
  these families globally as action minimizers in a rotating frame
  where they become periodic solutions with particular symmetries. A
  first step is to give estimates on intervals of the frame rotation
  frequency over which the relative equilibrium is the sole absolute
  action minimizer: this is done by generalizing to an arbitrary
  relative equilibrium the method used in \cite{BT} by V. Batutello
  and S. Terracini.

  In the second part, we focus on the relative equilibrium of the
  equal-mass regular $N$-gon.  The proof of the local existence of the
  vertical Lyapunov families relies on the fact that the restriction
  to the corresponding directions of the quadratic part of the energy
  is positive definite. We compute the symmetry groups
  $G_{\frac{r}{s}}(N,k,\eta)$ of the vertical Lyapunov families
  observed in appropriate rotating frames, and use them for
  continuing the families globally.

  The paradigmatic examples are the ``Eight'' families for an odd
  number of bodies and the ``Hip-Hop'' families for an even number.
  The first ones generalize Marchal's $P_{12}$ family for 3 bodies,
  which starts with the equilateral triangle and ends with the Eight
  \cite{CM,Ma2,CFM,CF1,S}; the second ones generalize the Hip-Hop
  family for 4 bodies, which starts from the square and ends with the
  Hip-Hop \cite{CV,CF1,TV}. 

  We argue that it is precisely for these two families that global
  minimization may be used. In the other cases, obstructions to the
  method come from isomorphisms between the symmetries of different
  families; this is the case for the so-called ``chain''
  choreographies (see \cite{S}), where only a local minimization
  property is true (except for $N=3$). Another interesting feature of
  these chains is the deciding role played by the parity, in
  particular through the value of the angular momentum. For the
  Lyapunov families bifurcating from the regular $N$-gon whith $N\leq
  6$ we check in an appendix that locally the torsion is not zero,
  which justifies taking the rotation of the frame as a parameter.
\end{abstract}

\clearpage  
\tableofcontents 
\listoffigures 
\clearpage

\section*{RELATIVE EQUILIBRIA\\ AND THEIR  LYAPUNOV FAMILIES}
\addcontentsline{toc}{section}{RELATIVE EQUILIBRIA AND THEIR LYAPUNOV
   FAMILIES}

\section{Lyapunov families bifurcating normally from relative
  equilibria}

We consider relative equilibria rotating in the horizontal plane $\R^2
\times \{0\} \subset \R^3$, that is, of the form
$$e^{\J \omega_1 t} \, C,$$
where $C$ is a central configuration, $\omega_1 >0$ is the frequency
and $\J$ is the horizontal-rotation operator acting diagonally on the
horizontal components by a rotation of $\pi/2$ and trivially on the
vertical ones. Along the paper, various normalizations of $C$ and
various rotating frames will be considered, and the corresponding
relative equilibrium will often be denoted by $\bar x(t)$.

\subsection{The horizontal and vertical variational equations}

The variational equations associated with a solution $x_i=x_i(t)$ of
the Newton's equations for $N$ bodies in the Euclidean space $(\R^3,
\| \cdot \|)$
$$\ddot x_i=\sum_{i\not=j}m_j\frac{x_j-x_i}{||x_j-x_i||^3},\;
i=1,\ldots,N$$
are
$$\ddot {\delta x_i}=\sum_{j\not= i}m_j\frac{\delta x_j-\delta
   x_i}{||x_j-x_i||^3}-3\sum_{j\not= i}m_j
\frac{\langle x_j-x_i,\delta x_j-\delta
  x_i\rangle}{||x_j-x_i||^5}(x_j-x_i).\eqno(VE)$$

Along a planar solution $x_i=x_i(t)$ (supposed to be horizontal), the
Pythagoras theorem implies these equations split into horizontal and
vertical parts. Namely, if $\delta x_i=h_i+z_i \in \C \oplus \R$ is
the decomposition of the variations in respectively horizontal and
vertical components, (VE) is equivalent to the following pair of
equations: the complicated {\it horizontal variational equation}
$$\ddot {h_i}=\sum_{j\not= 
i}m_j\frac{h_j-h_i}{||x_j-x_i||^3}-3\sum_{j\not= 
i}m_j \frac{\langle
   x_j-x_i,h_j-h_i\rangle}{||x_j-x_i||^5}(x_j-x_i),\eqno(HVE)$$
and the much simpler {\it vertical variational equation}
$$\ddot z_i=\sum_{j\not= i}m_j\frac{z_j-z_i}{||x_j-x_i||^3}.\eqno(VVE)$$

\subsection{Vertical variations of a relative equilibrium}
\label{sec:vvRe}

As the mutual distances $r_{ij}=||x_j-x_i||$ stay constant along a
relative equilibrium motion, the corresponding vertical variational
equation has constant coefficients:
{\small
   $$\left(\begin{array}{c}\ddot z_1\\ .\\.\\. \\ \ddot
       z_{N}\end{array}\right)
   =\left(
     \begin{array}{cccccc}
       -\sum_{j\not=1}{m_j\over r_{j1}^3}&{m_2\over
         r_{21}^3}&{m_3\over r_{31}^3}&.&.&{m_N\over r_{N1}^3}\\
       {m_1\over r_{12}^3}&-\sum_{j\not=2}{m_j\over
         r_{j2}^3}&{m_3\over r_{32}^3}&.&.&{m_N\over r_{N2}^3}\\
       .&.&.&.&.&.\cr.&.&.&.&.&.\\
       {m_1\over r_{1N}^3}&{m_2\over r_{2N}^3}&{m_3\over
         r_{3N}^3}&.&.&-\sum_{j\not=N}{m_j\over
         r_{jN}^3}
     \end{array}\right) \left(
     \begin{array}{c}
       z_1\\.\\.\\. \\  z_{N}
     \end{array}\right),$$}
or $\ddot z={\cal W}z$, where ${\cal W}$ is, up to a factor -2, the
transposed of the {\sl Wintner-Conley matrix} (i.e., up to a factor -2
and a transposition, it represents the endomorphism $A$ of the space
of codispositions in \cite{AC}). The sum of the elements of any line
of ${\cal W}$ is equal to 0. This implies that it acts on the space of
dispositions ${\cal D}=\R^N/(1,1,\ldots,1)\R$, to which $z$ rightly
belongs.

The matrix ${\cal W}$ is symmetric for the mass scalar product, which
means that
$$z'\cdot {\cal W}z''={\cal W}z'\cdot z'',\quad\hbox {where}\quad z'\cdot
z''=\sum_{i=1}^N{m_iz'_iz''_i}.$$ Hence the eigenvalues of ${\cal W}$
are real; because the Newton force is attractive, they are also
negative (see~\cite[Proposition~1]{Mo} or~\cite{AC}). We will call
their distinct values the {\sl vertical frequencies} and denote them
$-\omega_1^2,-\omega_2^2,\ldots,-\omega_{\ell}^2$, $\ell\le N-1$, {\it
  where the $\omega_k$ are chosen to be positive}; note that by making
${\cal W}$ act on ${\cal D}$ we have taken away the eigenvalue 0.

Now, let $Z_1,\ldots,Z_{(N-1)}$ be a basis of ${\cal D}$ (which can be
chosen orthogonal) consisting of eigenvectors of ${\cal W}$ with
eigenvalues $-\omega(1)^2,\ldots,-\omega(N-1)^2$, not necessarily
distinct. The general solution $Z(t)$ of (VVE) is of the form
$$Z(t)=\sum_{j=1}^{N-1} \Re(\alpha_jZ_je^{i\omega(j)t}),\quad \alpha_j\in\C,$$
that is
$$Z(t)=\sum_{k=1}^\ell{\Re(W_ke^{i\omega_k t})},$$
where each $W_k$ is a {\it complex} eigenvector of ${\cal W}$ with
eigenvalue $-\omega_k^2$.

\subsection{What is known about the vertical frequencies}
\label{sec:known}

First, one of the frequencies is that of the relative equilibrium,
$\omega_1$: it corresponds indeed to infinitesimal rotations around a
horizontal axis.

Let us now compare $\omega_1$ to the other frequencies. Let $I(x) =
|x|^2=\sum{m_i||x_i||^2}$ be the moment of inertia (i.e. the
square norm in the mass metric), and let $U(x)=\sum_{i<j}{m_im_j\over
  ||x_j-x_i||}$ be the potential function. Since a central
configuration $C$ is a critical point of the scaled potential $\tilde
U=I^{1\over 2}U$, it is natural to write $U$ in terms of $\tilde U$:
$$dU(x)\delta x=-(x\cdot\delta x)I(x)^{-{3\over 2}}\tilde
U(x)+I(x)^{-{1\over 2}}d\tilde U(x)\delta x$$
and
$$d^2U(x)(\delta x,\delta x)=3(x\cdot\delta x)^2I(x)^{-{5\over 2}}
\tilde U(x) - 2(x\cdot\delta x)I(x)^{-{3\over 2}}d\tilde U(x)\delta x$$
$$-|\delta x|^2I(x)^{-{3\over 2}}\tilde U(x)+I(x)^{-{1\over 2}}d^2\tilde
U(x)(\delta x,\delta x).$$

Let now $x=C$, the central configuration. Let us split a tangent
vector $\delta x$ as before, into horizontal and vertical components
$h\in \C$ and $z\in \R$. Since $d\tilde U(C)=0$,
\begin{eqnarray*}
   d^2U(C)(h+z,h+z)
   &=&3(C\cdot h)^2I(C)^{-{5\over 2}}\tilde U(C) -
   (|h|^2+|z|^2)I^{-\frac{3}{2}}\tilde U(C) \\
   && + I(C)^{-{1\over 2}} d^2\tilde U(C)(h+z,h+z).
\end{eqnarray*}
In particular, for a vertical variation,
$$d^2U(C)(z,z)=-|z|^2I(C)^{-{3\over 2}}\tilde U(C)+I(C)^{-{1\over
    2}}d^2\tilde U(C)(z,z).$$ But, taking the scalar product with
$\bar x(t)= e^{\J \omega_1 t} C$ of the identity
$$\nabla U(C)=\ddot {\bar x}(t)=-\omega_1^2\bar x(t),$$ where the
gradient is relative to the mass metric, one gets $I(C)^{-{3\over
     2}}\tilde U(C)=\omega_1^2$. Finally, one deduces that, if ${\cal
   W}(C)Z_k=-\omega_k^2Z_k$,
$$d^2\tilde  U(C)(Z_k,Z_k)=(\omega_1^2-\omega_k^2)I^{1\over 2}|Z_k|^2.$$
If we write any vertical variation $Z=\sum_{i=1}^{N-1}u_iZ_{(i)}$ in
terms of the orthogonal basis introduced in section~\ref{sec:vvRe}, we
get
$$d^2\tilde U(C)(Z,Z) = \sum_{i=1}^{N-1}
\left(\omega_1^2-\omega(i)^2\right)I^{\frac{1}{2}}u_i^2|Z_{(i)}|^2.$$
Is was proved by Pacella (in the equal-mass case) and Moeckel (in the
general case) that a planar central configuration $C$ of at least 4
bodies is never a local minimum of $\tilde U$: more precisely, there
exists always some $Z$ such that $d^2\tilde U(C)(Z,Z)<0$. This implies
the

\begin{lemma}[Pacella, Moeckel]
   For any central configuration $C$ of at least 4 bodies, at least one
   of the normal frequencies $\omega_k$ is strictly greater than
   $\omega_1$.
\end{lemma}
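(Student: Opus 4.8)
The plan is to read the conclusion directly off the diagonalization of $d^2\tilde U(C)$ established just above, treating the Pacella--Moeckel nonminimality statement as the single nontrivial input. Recall that the orthogonal eigenbasis $Z_{(1)},\ldots,Z_{(N-1)}$ of ${\cal W}$ simultaneously diagonalizes the restriction of the Hessian $d^2\tilde U(C)$ to vertical variations: writing a vertical variation as $Z=\sum_{i=1}^{N-1}u_iZ_{(i)}$, we have
$$d^2\tilde U(C)(Z,Z)=\sum_{i=1}^{N-1}\bigl(\omega_1^2-\omega(i)^2\bigr)I^{1/2}u_i^2|Z_{(i)}|^2.$$
Hence the sign of this quadratic form is controlled, term by term, by the sign of $\omega_1^2-\omega(i)^2$, since $I^{1/2}>0$, each $u_i^2\ge 0$, and each $|Z_{(i)}|^2>0$ (the $Z_{(i)}$ being nonzero eigenvectors).

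First I would invoke the cited theorem of Pacella (equal masses) and Moeckel (general masses): because $C$ has at least $4$ bodies, it is not a local minimum of $\tilde U$, and in fact there is a vertical variation $Z$ with $d^2\tilde U(C)(Z,Z)<0$. Decomposing this particular $Z$ in the eigenbasis and substituting into the formula, the right-hand side becomes a sum in which the only possibly-negative factor is $\omega_1^2-\omega(i)^2$. Such a sum can be negative only if at least one of these factors is negative, that is $\omega(i)^2>\omega_1^2$ for some index $i$ with $u_i\neq 0$. Since all the frequencies are chosen positive, this yields $\omega(i)>\omega_1$; and as the distinct values of the $\omega(i)$ are exactly the normal frequencies $\omega_1,\ldots,\omega_\ell$, at least one $\omega_k$ satisfies $\omega_k>\omega_1$, which is the assertion.

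The only genuine obstacle is external to this deduction: it is the Pacella--Moeckel theorem itself, whose proof demands a real analysis of the Hessian of the scaled potential on the space of planar configurations, and which is precisely what forces the hypothesis $N\ge 4$. That hypothesis is essential here: for the equilateral triangle ($N=3$, equal masses) the matrix ${\cal W}$ acting on dispositions has a single nonzero eigenvalue, so there is only one vertical frequency, which must then equal $\omega_1$; correspondingly $d^2\tilde U(C)$ vanishes on vertical variations and the conclusion fails. Granting the Pacella--Moeckel input, however, the lemma is pure bookkeeping: its entire content is that, on the vertical directions, a negative direction of $d^2\tilde U(C)$ is the same thing as an eigenvalue of ${\cal W}$ lying strictly below $-\omega_1^2$.
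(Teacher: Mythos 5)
Your proposal is correct and is essentially identical to the paper's own argument: both take the Pacella--Moeckel nonminimality statement as the sole external input and read the conclusion off the diagonalization $d^2\tilde U(C)(Z,Z)=\sum_i(\omega_1^2-\omega(i)^2)I^{1/2}u_i^2|Z_{(i)}|^2$ established just before the lemma. Nothing is missing.
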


This has a direct consequence on the Hessian of the Lagrangian action:
let
$$\zeta(t)=Z_k(\frac{\omega_1}{\omega_k}t)=\Re(W_ke^{i\omega_1t}).$$
Then, for the action during time $T=\frac{2\pi}{\omega_1}$, one has
$$d^2{A}(
\bar x(t))(\zeta(t),\zeta(t))=\pi |W_k|^2(\omega_1^2-\omega_k^2),$$
which has the same sign as $d^2\tilde  U(C)(Z_k,Z_k)$.
The proof of this formula is a direct computation: on the one hand,
$$\int_0^{T}|\dot\zeta(t)|^2dt=\pi \omega_1^2|W_k|^2,$$
on the other hand, we have seen that, for any $t$,
$d^2U(C)(\zeta(t),\zeta(t))=-\omega_k^2\zeta(t)$.
This implies the following identity and hence the desired formula:
$$\int_0^{T}{d^2U(
  \bar x(t))(\zeta(t),\zeta(t))dt} =
-\omega_k^2\int_0^{2\pi}{|\zeta(t)|^2dt}=-\omega_k^2\pi |W_k|^2.$$
\smallskip

It follows that, if $N\ge 4$, a relative equilibrium is never a local
minimizer of the action (compare \cite{C1}).

\subsection{Lyapunov families and their lifts}
\label{sec:lyapunov}

\begin{figure}[h]
    \centering
    \includegraphics[width=10cm]{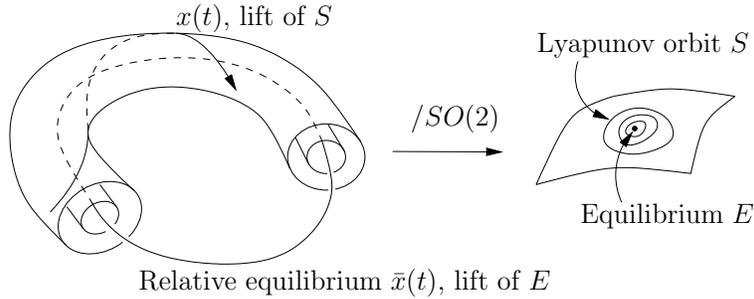}
    \caption{Lyapunov family of a relative equilibrium}
    \label{fig:lyapunov}
\end{figure}

The system may be reduced by fixing the angular momentum and
quotienting by rotations around its axis; if the relative equilibrium
$\bar x(t)$ is horizontal, the angular momentum vector will be
vertical. Besides, by definition of a relative equilibrium, the
solution $\bar x(t)$ projects to an equilibrium point $E$ of the
reduced system. Each pair of conjugate, purely imaginary eigenvalues
of the linearization of the reduced vector field corresponds to a
$2\pi/\omega_k$-periodic eigenmode. In good cases, these eigenmodes
give rise to a family of solutions of the reduced system of period
close to $2\pi/\omega_k$ in the neighborhood of the equilibrium,
called a \emph{Lyapunov family}.

\medskip In the simplest case where no other frequency of the
equilibrium is an integer multiple of $\omega$ (this is for example
the case of the Hip-Hop family of the four-body problem), the Lyapunov
center theorem~\cite{Mos} proves the local existence, uniqueness and
regularity of this family. In resonant cases, one can use either an
argument involving a higher order normal form (such as in \cite{CF2}
for the $P_{12}$ family), or a topological, existence argument (such
as the Weinstein-Moser theorem~\cite{Mos}); for a detailed discussion
on the simplest examples, see section~\ref{sec:localCont}. Global
existence theorems also apply \cite{AY,CMP}, but they give only
general information on the nature of the families.

\medskip Each Lyapunov family $L$ lifts to the original phase space as
a local one-para\-meter (non-reduced, Lyapunov) family of invariant
two-tori foliated by quasi-periodic solutions; on a given torus, any
two solutions differ only by a rotation around the vertical axis
(figure~\ref{fig:lyapunov}). A non-reduced Lyapunov family associated
with an eigenmode $z(t) = \Re Z_k \, e^{i\omega_k t}$ is tangent to
the lift to the phase space of the \emph{cylindrical} family of curves
$$t \mapsto (\bar x(t),Az(t-\varphi)),$$
para\-metrized by a phase $\varphi$ and an amplitude $A$. When
observed in a rotating frame which puts into resonance the horizontal
and vertical frequencies, these curves become a cylindrical family of
loops, whose period is an integer multiple of $2\pi/\omega_k$, say
$s2\pi/\omega_k$, $s\in\Z \setminus 0$.

In a continuous family of rotating frames whose rotation frequency now
depends on $S$ (in general the rotation of the frame will vary, due to
torsion, while it is fixed along the one-parameter family of solutions
in the tangent cylinder), the quasiperiodic Lyapunov solutions $x(t)$
themselves become periodic solutions whose homology class in $\T^2$ is
independant of $S$. More precisely, if $\mu(S) \in \R/\Z$ is their
monodromy, defined by
$$x(t+ 2\pi/\omega(S)) = e^{\J 2\pi \mu (S) / \omega(S)}x(t),$$
where $\omega(S)$ is the frequency of $S$ and $\J$, as before, is the
rotation operator,\footnote{Vertical colinear motions should be avoided
  since they correspond to a singularity of the reduction.} the
condition is that
$$\dfrac{\mu(S) - \varpi}{\omega(S)} \in \Q.$$

The scaling invariance of Newton's equations (if $x(t)$ is a
solution, so is $\lambda^{-\frac{2}{3}}x(\lambda t)$ for any
$\lambda>0$) entails that each family can be freely rescaled. Fixing
the norm of the angular momentum is a way to choose a normalization.
But when moving away from the relative equilibrium, this may
artificially lead to singularities, at elements of the family having
zero angular momentum. Fixing instead the period (in addition to the
direction of the angular momentum) is then a better choice of
normalization, allowing to bypass zero-angular momentum elements. (On
the other hand there is no hope with our techniques to bypass
singularities where the period itself tends to infinity.) This allows
to assume that in the above family of rotating frames the Lyapunov
solutions all have the same period $s2\pi /\omega_k$.

\medskip When the loops in the cylindrical family have some
symmetries, the question arises whether the solutions in the the
Lyapunov family, when observed in the above family of rotating frames,
share the same symmetry. The uniqueness statement in the Lyapunov
theorem implies that the answer is positive locally when the eigenmode
is simple.  However, the first eigenmode is always degenerate, which
requires more analysis (see section~\ref{sec:localCont}).

The really interesting question is to continue the family globally:
if the integrated torsion effect allows continuation up to the
inertial frame, we shall have completed the primary purpose of our
study --to prove the existence of symmetric solutions in the inertial
frame. If the symmetry group is rich enough, this program can be
achieved using minimization of the Lagrangian action. This will be
illustrated in section~\ref{sec:global} with
choreographic and Hip-Hop solutions arising from the regular $N$-gon
relative equilibrium.

\section{Minimizing properties of relative equilibria}
\label{sec:minimizing}

Assume here that the central configuration $C$ is normalized so that
$e^{\J t} C$ is a relative equilibrium.

We study the action minimizing properties of the rescaled relative
equilibrium
$$\bar x(t) = x^\omega_\varpi(t) = e^{\J (\omega+\varpi) t} C_\varpi^\omega,$$
where 
$$C_\varpi^\omega=(\omega+\varpi)^{-\frac{2}{3}}C.$$ 
In a frame rotating with frequency $\varpi$, it becomes the
$T = 2\pi/\omega$-periodic loop
$$\bar y(t) = y_\varpi^\omega(t) = e^{-\J\varpi t} x_\varpi^\omega(t).$$

Let $\Lambda$ be the space of $T$-periodic loops in the configuration
space with $H^1$ regularity. For each value of $\varpi$ we define the
action $\mathcal {A}_\varpi(y)$ of $y(t)\in\Lambda$ as the action of
the path $x(t)=e^{\J\varpi t}y(t)$ in the inertial frame:
$$\mathcal{A}_\varpi(y)=\mathcal{A}(x) =
\int_0^{T}\left(\frac{1}{2}|\dot x|^2+U(x)\right)dt
=\int_0^{T}\left(\frac{1}{2}|\dot y+\J\varpi y|^2+U(y)\right)dt.$$
We call $\mathcal {A}_\varpi$ the action in a frame rotating with
frequency $\varpi$. \smallskip

\subsection{Local minimizing properties}
\label{sec:localMin}

In a frame whose
rotation frequency is $\varpi$, $x^\omega_\varpi(t)$ has frequency
$\omega$.  The Lagrangian action during a given time $T$ of a curve
$x(t)=(x_1(t),\ldots,x_n(t))$ in the configuration space is
$$\mathcal{A}(x) = \int_0^T\left[{1\over 2}|\dot
  x(t)|^2+U(x(t))\right]dt,$$ where, as before, $|\dot
x|^2=\sum{m_i||\dot{x}_i||^2}$ and $U(x) = \sum_{i<j}{m_im_j\over
  ||x_j-x_i||}$ is the potential function. As the kinetic and
potential energies of $x^\omega_\varpi$ are independant of time, its
action during time $T$ is readily computed to be
$$\mathcal{A}(C,\omega,T,\varpi) := \mathcal{A}
(x^\omega_\varpi) = (\omega+\varpi)^\frac{2}{3} \frac{T}{2\pi}a,$$
where $a$ is the action of the normalized solution $x_0^1(t) = e^{\J
  t}C$.

As expected, the action tends to zero when $\varpi$ tends to
$-\omega$: at the limit, the rotation of the frame alone accounts for
the motion and the bodies have only to stay still at infinity in the
inertial frame.  \smallskip

We are interested in the local action minimizing properties under
appropriate symmetry constraints of the members of this family in the
following case:
$$\omega=\frac{r}{s}\omega_k,\quad 
T=s\frac{2\pi}{\omega_k}=r\frac{2\pi}{\omega},$$ where $\omega_1=1,
\omega_2,\cdots,\omega_k,\cdots$ are the frequencies of the vertical
variational equation associated with $x_0^1(t)$ and $r,s$ are mutually
prime integers. Note that $T$ is the minimal period of
$(x_\varpi^\omega(t),Z_k(t))$ in the rotating frame with frequency
$\varpi$, where $Z_k(t)$ has frequency $\omega_k$.  \smallskip

Let us first compute the Hessian
$$d^2\mathcal{A}(x)(\xi,\xi) =
\int_0^T \left[|\dot\xi(t)|^2+d^2U(x(t))(\xi(t),\xi(t))\right]dt.$$ of
the action when $x=x^\omega_\varpi$ and $\xi=(0,0,Z_k)$ with $Z_k$ a
solution of (VVE) of the form $Z_k(t)=\Re(W_ke^{i\omega_k t})$ and
$W_k$ a complex eigenvector of ${\cal W}(C)$ with eigenvalue
$-\omega_k^2$ (see section 1.2).  As $\int_0^{T}{\cos^2(\omega_k
  t)dt}=\int_0^{T}{\sin^2(\omega_k t)dt}$, we get immediately that
$$\int_0^{T}|\dot Z_k(t)|^2dt=\omega_k^2\int_0^{T}| Z_k(t)|^2dt.$$
Hence
$$d^2\mathcal{A}(x^\omega_\varpi)(Z_k,Z_k)=
\int_0^{T}\left[\omega_k^2|Z_k(t)|^2+d^2U(C^\omega_\varpi)
   (Z_k(t),Z_k(t))\right]dt.$$

Now $dU(x)X=X\cdot {\cal W}x$, where $x$ and $X$ belong to ${\cal
   D}\otimes \R^3\equiv{\cal D}^3$ and ${\cal W}$ acts on each
component. Since the mutual distances, hence ${\cal W}$, stay constant
under a vertical variation, we get
$$d^2U(C^\omega_\varpi)(Z_k(t),Z_k(t))=Z_k(t)\cdot{\cal
   W}(C^\omega_\varpi)Z_k(t)=-(C^\omega_\varpi/C)^{-3}\omega_k^2|Z_k(t)|^2,$$
that is
$$d^2\mathcal{A}(x^\omega_\varpi)(Z_k,Z_k)=
\bigl(1-(\omega+\varpi)^2\bigr)\omega^2_k\int_0^T|Z_k(t)|^2dt.$$ In
particular, $d^2\mathcal{A}(x^\omega_\varpi)(Z_k,Z_k)=0$ iff
$\varpi=\pm 1-\omega=\pm 1-\frac{r}{s}\omega_k$.  This is easy to
explain: $\mathcal{A}$ is the action in a frame which rotates in such
a way that the period of the relative equilibrium becomes $s$ times
the one of $Z_k(t)$. It follows that $Z_k(t)$ defines a periodic
Jacobi field and this implies that the corresponding variation belongs
to the kernel of the Hessian of $\mathcal{A}$.

\begin{figure}[h]
    \centering
    \includegraphics[width=9cm]{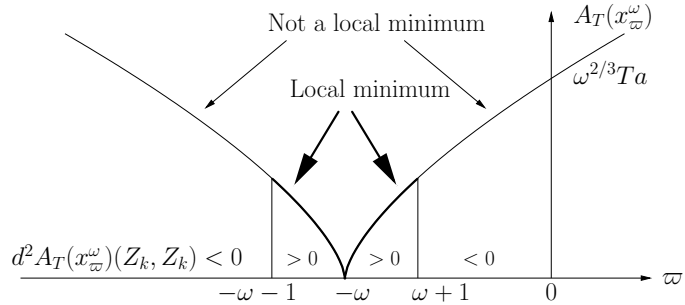}
    \caption{Action of a relative equilibrium in a frame with rotation
      speed $\varpi$}
    \label{fig:actionRE}
\end{figure}

In the rotating frame, the solution $x^\omega_\varpi$ possesses all
the symmetries of the solutions of the (VVE) considered above. If
the symmetry constraints that we impose are strong enough to allow
only one type of solutions of (VVE) and no solution of (HVE),
$x^\omega_\varpi$ will be the only local minimizer of the action under
thee constraints in the whole interval $-(1+\omega)\le\varpi\le
1-\omega$.

In the case of the regular $n$-gon configuration of $n$
equal masses, this will be the case for 3 or 4 bodies; but already for
5 bodies, we shall see an example where the symmetry constraint does
not discriminate between two different solutions of (VVE).  In the
next section we study the global minimization problem.

\goodbreak

\subsection{Global minimizing properties}
\label{sec:globalMin}

This section, which as well as~\ref{sec:NGon} is taken from the
unpublished manuscript \cite{C2}, develops an idea first used by
Barutello and Terracini in \cite{BT}. \smallskip

\textit{We shall now systematically use the shorter notation $\bar
  x(t) = e^{\J(\omega +\varpi)t} \bar x$ in place of $x_\varpi^\omega
  (t) = e^{\J(\omega +\varpi)t} C_\varpi^\omega$.}

Let $G$ be a finite subgroup of $O(2)\times\Sigma_N\times O(3)$. It
has a natural action on the space $\Lambda$ of $T$-periodic loops in
the configuration space of the problem:
$g=(\tau,\sigma,\rho)\in G$ acts on
$y(t)=\bigl(y_1(t),\cdots,y_{N}(t)\bigr)\in\Lambda$ according to the
diagram

$$
\begin{array}[t]{cccccc}
   y : &\R/T\Z &\times &\{1,...,N\} &\rightarrow &\R^{3}\\
   &\tau\downarrow\phantom{\tau} &&\sigma\downarrow\phantom{\sigma}
   &&\rho\downarrow\phantom{\rho}\\
   gy : &\R/T\Z &\times &\{1,...,N\} &\rightarrow &\R^{3}.
\end{array}$$
The transformed loop is
$$gy_j(t)=\rho y_{\sigma^{-1}(j)}(\tau^{-1}(t));$$
this convention is chosen so as to have a left action, i.e.  $(gg') q
= g(g'q)$. \medskip

{\it We suppose that $\bar y(t)=e^{-\J\varpi t}\bar x(t)=e^{\J\omega
    t}\bar x$ belongs to the subset $\Lambda^G\in\Lambda$ of loops
  which are invariant under this action and we look for conditions on
  $G$ and $\varpi$ which insure that the sole absolute minimizer of
  $\mathcal{A}_\varpi$ is $y$.}

\subsubsection{Choice of a new functional}

Following \cite{BT} we are looking for an action functional
$y\mapsto\mathcal{\bar A}_\varpi(y)$ with the following properties:

1) $\mathcal{A}_\varpi(y)\ge\mathcal{\bar A}_\varpi(y)$ for any
$y(t)\in\Lambda^G$,

2) $\mathcal{\bar A}_\varpi$ attains its minimum value at $\bar y(t)$

3) If $y(t)$ is a solution in the rotating frame, then $\mathcal
{A_\varpi}(y)=\mathcal{\bar A}_\varpi(y)$ if and only if $y(t)=\bar
y(t)$.  \smallskip

If such a functional exists, the sole absolute
minimum of $\mathcal{A}_\varpi(y)$ is $\bar y(t)$.
\smallskip

As in \cite{BT}, from which the notations are
borrowed (but with different normalization), one
looks for a functional
of the form
$$\mathcal{\bar A}_\varpi(y)=\mathcal{\bar A}(x) =
\frac{\lambda}{2}\sum_{i<j}\bar\mu_{ij}\xi_{ij}^x+
c \left(\sum_{i<j}\bar\mu_{ij}\xi_{ij}^x \right)^{-\frac{1}{2}},$$
where, if
$x(t)=\left(x_1(t),\cdots, x_{N}(t)\right)=e^{\J\varpi t}y(t)$,
$$\xi_{ij}^x=\int_0^T||x_i(t)-x_j(t)||^2dt=\int_0^T||y_i(t)-y_j(t)||^2dt$$
and all the coefficients $\lambda,c,\bar\mu_{ij}$ are positive.

\subsubsection{Minoration of the potential part of the action}

By Jensen inequality applied to the convex function $f(s)=s^{-\frac{1}{2}}$,
$$\frac{1}{T}\int_0^T\frac{dt}{||x_i(t)-x_j(t)||}\ge
\left(\frac{1}{T}\int_0^T||x_i(t)-x_j(t)||^2\right)^{-\frac{1}{2}}=
\left(\frac{1}{T}\xi_{ij}^x\right)^{-\frac{1}{2}},$$
with equality if and only if
$||x_i(t)-x_j(t)||$
is independent of $t$.
\smallskip

Hence, for any choice of positive $\mu_{ij}$'s,
there exists a maximal $\bar c>0$ such that
$$\int_0^TU(x)dt\ge
T^{\frac{3}{2}}\sum_{i<j}m_im_j\left(\xi_{ij}^x\right)^{-\frac{1}{2}}
\ge \bar c T^{\frac{3}{2}}
\left(\sum_{i<j}\mu_{ij}\xi_{ij}^x\right)^{-\frac{1}{2}}.$$
Indeed, as the function
$$\left(\sum_{i<j}\mu_{ij}\xi_{ij}^x\right)^{\frac{1}{2}}
\left(\sum_{i<j}m_im_j\left(\xi_{ij}^x\right)^{-\frac{1}{2}}\right)$$
is homoge\-neous of degree 0, it is
enough to look for a constrained minimum of
$\sum_{i<j}m_im_j\left(\xi_{ij}^x\right)^{-\frac{1}{2}}$ on
$\sum_{i<j}\mu_{ij}\xi_{ij}^x=1$, i.e. there must exist $\alpha$ such that
$$\forall i<j, \;
\frac{1}{2}m_im_j\left(\xi_{ij}^x\right)^{-\frac{3}{2}}
=\alpha\mu_{ij},\quad{\rm i.e.}\quad \forall i<j, \;
\xi_{ij}^x=\left(\frac{2\alpha\mu_{ij}}{m_im_j}\right)^{-\frac{2}{3}}.$$
{\bf Choice of the $\mu_{ij}$:} On chooses $\mu_{ij}=\bar\mu_{ij}$
such that the $\bar
\xi_{ij}=\left(\frac{\bar\mu_{ij}}{m_im_j}\right)^{-\frac{2}{3}}$ be
equal to the squared mutual distances $\bar r_{ij}^2$ of the central
configuration $\bar x$ (with relative equilibrium motion of period $T$
in the rotating frame). In other words, one chooses
$$\bar \mu_{ij}=m_im_j\bar r_{ij}^{-3}.$$
One checks that $\bar c=\bar U^{\frac{3}{2}}$, where $\bar U=U(\bar x)$.
We have proved the

\begin{lemma} 
  $$\int_0^TU(x(t))dt\ge (\bar{U} T)^{\frac{3}{2}}
  \left(\sum_{i<j}\bar\mu_{ij}\xi_{ij}^x\right)^{-\frac{1}{2}}$$ 
  with equality if and only if there exists $\beta>0$ and $S(t)\in
  O(3)$ such that for all $t$, $x(t)=\beta S(t)\bar x$.
\end{lemma}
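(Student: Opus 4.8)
Most of the inequality has already been put in place before the statement: chaining the pairwise Jensen bound with the minoration of $\sum_{i<j}m_im_j(\xi_{ij}^x)^{-1/2}$ by $\bar c\,(\sum_{i<j}\bar\mu_{ij}\xi_{ij}^x)^{-1/2}$, the only thing I would still check is the value $\bar c=\bar U^{3/2}$. With $\bar\mu_{ij}=m_im_j\bar r_{ij}^{-3}$, I would evaluate the two homogeneous-degree-zero factors at the central configuration, i.e. at $\xi_{ij}=\bar r_{ij}^2$: there $\sum_{i<j}m_im_j\bar r_{ij}^{-1}=\bar U$ while $\sum_{i<j}\bar\mu_{ij}\bar r_{ij}^2=\sum_{i<j}m_im_j\bar r_{ij}^{-1}=\bar U$ as well, so the minimal value of the ratio is $\bar c=\bar U\cdot\bar U^{1/2}=\bar U^{3/2}$. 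Substituting gives exactly the lower bound $(\bar UT)^{3/2}(\sum_{i<j}\bar\mu_{ij}\xi_{ij}^x)^{-1/2}$.

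For the equality statement I would examine separately the two inequalities that were chained. Equality in the Jensen step forces $\|x_i(t)-x_j(t)\|$ to be constant in $t$ for every pair, say equal to $d_{ij}$, so that $\xi_{ij}^x=T\,d_{ij}^2$. Equality in the minoration step says that $(\xi_{ij}^x)$ minimizes the homogeneous-degree-zero ratio $\bigl(\sum_{i<j}\bar\mu_{ij}\xi_{ij}\bigr)^{1/2}\bigl(\sum_{i<j}m_im_j\,\xi_{ij}^{-1/2}\bigr)$. The key point I would stress here is uniqueness of the minimizer up to positive scaling: on the slice $\sum_{i<j}\bar\mu_{ij}\xi_{ij}=1$ the function $\sum_{i<j}m_im_j\,\xi_{ij}^{-1/2}$ is a sum of the strictly convex maps $\xi\mapsto\xi^{-1/2}$ and tends to $+\infty$ on the boundary, so it has a unique minimizer, which the Lagrange computation identifies as $\xi_{ij}\propto\bar r_{ij}^2$. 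Combining the two, $T\,d_{ij}^2=\lambda\,\bar r_{ij}^2$ for a single $\lambda>0$, whence $d_{ij}=\beta\,\bar r_{ij}$ with $\beta=\sqrt{\lambda/T}$ independent both of the pair and of $t$.

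The remaining step, which I expect to be the only genuine difficulty, is to turn ``for every $t$ the configuration $x(t)$ has mutual distances $\beta\bar r_{ij}$'' into ``$x(t)=\beta S(t)\bar x$ with $S(t)\in O(3)$''. I would invoke the classical rigidity fact that a finite configuration in Euclidean space is determined by its array of mutual distances up to an isometry: at each fixed $t$ this yields $S(t)\in O(3)$ and a translation with $x(t)=\beta S(t)\bar x+\text{(translation)}$, and the translation vanishes because both $x(t)$ and $\bar x$ are centered at the origin. Since $\beta\bar r_{ij}>0$ there are no collisions, so this applies for every $t$. The delicate part is the regularity and well-definedness of $t\mapsto S(t)$: when $\bar x$ affinely spans $\R^3$ the ambiguity in $S(t)$ is at most a finite set coming from the symmetries of $\bar x$, so continuity of $x$ propagates to a continuous branch of $S(t)$; when $\bar x$ is degenerate (here it is planar, and possibly collinear) the isometry is only determined up to a positive-dimensional stabilizer, and I would then produce $S(t)$ by a measurable selection built from the Gram data of $x(t)$, which is itself a continuous function of $x(t)$, checking afterwards that the resulting $S(t)$ is compatible with the $H^1$ regularity of loops in $\Lambda$.

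Finally I would record the converse: if $x(t)=\beta S(t)\bar x$ then $\|x_i(t)-x_j(t)\|=\beta\bar r_{ij}$ is constant in $t$ and proportional to $\bar r_{ij}$, so both the Jensen step and the minoration step are equalities and the bound is attained. This closes the ``if and only if''.
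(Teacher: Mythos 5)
Your proposal is correct and follows the same route as the paper: the Jensen bound pair by pair, the homogeneity/Lagrange argument identifying $\bar c=\bar U^{3/2}$ at $\xi_{ij}=\bar r_{ij}^2$, and the rigidity of a configuration with prescribed mutual distances to get $x(t)=\beta S(t)\bar x$ in the equality case. The extra details you supply (strict convexity and blow-up at the boundary giving uniqueness of the constrained minimizer, and the selection/regularity of $t\mapsto S(t)$) are points the paper leaves implicit, but they do not change the argument.
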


The only if part is because the mutual distances of the configuration
$x(t)$ being independent of $t$ and proportional to the corresponding
ones of $\bar x$, this implies the existence of $\beta$ and $S(t)$.

\subsubsection{Minoration of the kinetic part of the action}

Integrating $|x(t)|^2$ by parts between 0 and $T$
does not lead to a boundary term because, if
$x(t)$
itself is not $T$-periodic,  $<x(t),\dot x(t)>$ is $T$-periodic;
one gets
$$\int_0^T|\dot x|^2dt=\int_0^T\sum_{i=1}^{N}m_i\|\dot
x_i\|^2dt=-\int_0^T\sum_{i=1}^{N}m_i<\ddot x_i,x_i>dt.$$ Let $\chi =
\mathcal{D} \otimes \R^3$ be the quotient of $(\R^3)^N$ by the action
of translations, and $\Delta:{\mathcal X}\to{\mathcal X}$ be defined by
$$(\Delta x)_i =
\sum_{j,j\not=i}\frac{\bar\mu_{ij}}{m_i}(x_i-x_j) =
\sum_{j,j\not=i}\frac{m_j}{\bar r_{ij}^3}(x_i-x_j)
\quad\hbox{\rm if}\quad x=(x_1,\cdots, x_{N}).$$
Then
\begin{align*}
   \sum_im_i\int_0^T<(\Delta x)_i(t),x_i(t)>dt
   &=\sum_{i,j,i\not=j}\frac{m_im_j}{\bar
     r_{ij}^3}\int_0^T<x_i-x_j,x_i>dt\\
   &=\sum_{i<j}\bar\mu_{ij}\int_0^T||x_j-x_i||^2dt.
\end{align*}
{\bf Definition.}
Let $\lambda_{\varpi}^G$ be the smallest (eigenvalue) $\lambda$
such that there exists a solution $y(t)\in\Lambda^G$ of the equation $-\ddot
y-2\J\varpi\dot y+\varpi^2 y=\lambda\Delta y$, or equivalently a solution
$x(t)=e^{\J\varpi t}y(t)$ of the equation $-\ddot
x=\lambda\Delta x$ with $y(t)\in\Lambda^G$.
\smallskip

Following \cite{BT}, the Poincar{\'e} inequality of
the Kepler case is replaced by
\begin{lemma} For any $y(t)\in\Lambda^G$, one has
   $$\int_0^T|\dot y(t)+i\J\varpi y(t)|^2dt=\int_0^T|\dot x(t)|^2dt\ge
   \lambda^G_{\varpi}\sum_{i<j}\bar\mu_{ij}\xi_{ij}^x,$$
   with equality if and on if $-\ddot x=\lambda_{\varpi}^G\Delta x$.
\end{lemma}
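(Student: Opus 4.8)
```latex
\textbf{Proof proposal.} The plan is to diagonalize the quadratic form
$y\mapsto\int_0^T|\dot x|^2\,dt$ on $\Lambda^G$ against the reference
quadratic form $y\mapsto\sum_{i<j}\bar\mu_{ij}\xi_{ij}^x$, and to
identify the smallest ratio with the eigenvalue $\lambda_\varpi^G$ just
defined. First I would rewrite both sides of the desired inequality as
quadratic forms in $x(t)=e^{\J\varpi t}y(t)$. For the right-hand side,
the computation already carried out in the excerpt gives
$\sum_{i<j}\bar\mu_{ij}\xi_{ij}^x=\sum_i m_i\int_0^T\langle(\Delta
x)_i,x_i\rangle\,dt$, so the target inequality becomes
$$\int_0^T|\dot x|^2\,dt\ge\lambda_\varpi^G\sum_i m_i\int_0^T
\langle(\Delta x)_i,x_i\rangle\,dt,$$
i.e. a comparison between the Dirichlet form and the $\Delta$-weighted
$L^2$ form, both restricted to the space $\{x=e^{\J\varpi t}y:
y\in\Lambda^G\}$. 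The key structural point is that $\Delta$ is symmetric
and positive definite for the mass scalar product (it is built from the
positive weights $\bar\mu_{ij}/m_i$ exactly as a graph Laplacian on the
space of dispositions), so it defines an inner product
$\langle\!\langle x,x'\rangle\!\rangle=\sum_i m_i\langle(\Delta x)_i,
x_i'\rangle$ on $\chi$.

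Second, I would set up the variational characterization. On the Hilbert
space $\Lambda^G$ (with the constraint $y\in\Lambda^G$ preserved because
$\Delta$, being built from $G$-invariant data $\bar\mu_{ij}$, commutes
with the $G$-action), consider the Rayleigh quotient
$$R(y)=\frac{\displaystyle\int_0^T|\dot y+\J\varpi y|^2\,dt}
{\displaystyle\sum_{i<j}\bar\mu_{ij}\xi_{ij}^x}.$$
The Euler--Lagrange equation of the numerator subject to fixing the
denominator is precisely $-\ddot x=\lambda\Delta x$, equivalently
$-\ddot y-2\J\varpi\dot y+\varpi^2 y=\lambda\Delta y$, where $\lambda$
is the Lagrange multiplier; this is the equation appearing in the
Definition. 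Hence the infimum of $R$ over $\Lambda^G$ is attained at a
solution of this equation, and equals the smallest such eigenvalue,
which is exactly $\lambda_\varpi^G$. From $R(y)\ge\lambda_\varpi^G$ for
all $y\in\Lambda^G$ the inequality follows, and equality holds iff $y$
realizes the infimum, i.e. iff $-\ddot x=\lambda_\varpi^G\Delta x$.

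Third, I would justify that the infimum is genuinely attained so that
the eigenvalue exists and the equality clause is valid. One expands both
forms in Fourier series in $t$ and in the eigenbasis of $\Delta$;
because $\Delta$ has a discrete spectrum of positive eigenvalues and the
Dirichlet form has compact resolvent relative to the $L^2$ form on the
finite-codimension space $\Lambda^G$, standard spectral theory gives a
minimizing eigenfunction. The $G$-invariance must be tracked throughout:
since $\Delta$ preserves $\Lambda^G$, the restricted problem is again a
self-adjoint eigenvalue problem, and $\lambda_\varpi^G$ is its bottom
eigenvalue.

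The main obstacle I expect is the equality analysis together with the
role of $\varpi$: one must check that the operator
$y\mapsto-\ddot y-2\J\varpi\dot y+\varpi^2 y$ is self-adjoint and
nonnegative on $T$-periodic loops, so that the Rayleigh quotient is
bounded below and its infimum is a true minimum rather than only an
infimum, and that this minimum is positive so that $\lambda_\varpi^G$ is
well defined and strictly positive on the relevant $\varpi$-interval.
The subtlety is that the first-order term $-2\J\varpi\dot y$ couples the
horizontal components and shifts the spectrum; I would handle this by
passing to the inertial variable $x=e^{\J\varpi t}y$, in which the form
is simply $\int_0^T|\dot x|^2\,dt$ and manifestly nonnegative, while the
constraint $y\in\Lambda^G$ translates into an (anti)periodicity-type
boundary condition on $x$ that keeps the problem self-adjoint. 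The
equality statement then reduces, as claimed, to saturating the Rayleigh
quotient, which forces $x$ to solve $-\ddot x=\lambda_\varpi^G\Delta x$.
```
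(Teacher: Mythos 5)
Your proposal is correct and follows essentially the same route as the paper: the authors also define the Rayleigh quotient $J(x)=\int_0^T|\dot x|^2dt\,/\sum_{i<j}\bar\mu_{ij}\xi_{ij}^x$ on $\{x=e^{\J\varpi t}y,\ y\in\Lambda^G\}$, invoke coercivity for the existence of a minimizer, derive the Euler equation $-\ddot x=J(x)\Delta x$, and then identify the minimum with $\lambda_\varpi^G$ via the integration-by-parts identity (the paper merely splits this last identification off into the following lemma, whereas you fold it into one argument).
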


\begin{proof} Define $\lambda_0$ as the minimum of the positive functional
   $$J(x)=\frac{\sum_{i=1}^Nm_i\int_0^T||\dot
     x_i||^2dt}{\sum_{i<j}\frac{m_im_j}{\bar
       r_{ij}^3}\int_0^T||x_i(t)-x_j(t)||^2dt}$$ defined
   on the set of $x(t)=e^{\J\varpi
     t}y(t)$ in $H^1$ with $y(t)\in\Lambda^G$.  The
   existence of a minimizer $x(t)$ is
   insured as soon as the class of loops $\Lambda^G$
   implies coercivity.  Writing the Euler
   equations, one gets that for such a minimizer,
   $$-\ddot x=J(x)\Delta x.$$
\end{proof}

\begin{lemma}
   $\lambda_0=\lambda_{\varpi}^G$.
\end{lemma}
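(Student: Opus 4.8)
The plan is to recognise the statement as the standard variational characterisation of the lowest eigenvalue of a self-adjoint pencil through its Rayleigh quotient, and to prove the two inequalities $\lambda_0\ge\lambda_\varpi^G$ and $\lambda_0\le\lambda_\varpi^G$ separately. The functional $J$ is precisely the Rayleigh quotient of the pencil $(-d^2/dt^2,\Delta)$: its numerator is $\int_0^T|\dot x|^2\,dt$, while, by the identity established just before the Definition, its denominator equals $\sum_i m_i\int_0^T\langle(\Delta x)_i,x_i\rangle\,dt$, the quadratic form of $\Delta$ in the mass inner product. Both $\lambda_0$ and $\lambda_\varpi^G$ are attached to the same class of loops $x(t)=e^{\J\varpi t}y(t)$ with $y\in\Lambda^G$, which is what makes the comparison meaningful.

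For the inequality $\lambda_0\ge\lambda_\varpi^G$, I would invoke the proof of the preceding lemma, which exhibits a minimizer $x_0=e^{\J\varpi t}y_0$ of $J$ (existence coming from the coercivity furnished by the symmetry class $\Lambda^G$) and shows that it satisfies the Euler--Lagrange equation $-\ddot x_0=\lambda_0\,\Delta x_0$ with $y_0\in\Lambda^G$. Thus $\lambda_0$ is one of the eigenvalues competing in the definition of $\lambda_\varpi^G$, and since the latter is by definition the smallest such eigenvalue, $\lambda_0\ge\lambda_\varpi^G$.

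For the reverse inequality $\lambda_0\le\lambda_\varpi^G$, let $x^*=e^{\J\varpi t}y^*$, with $y^*\in\Lambda^G$, be an eigenfunction realising $\lambda_\varpi^G$, so $-\ddot x^*=\lambda_\varpi^G\,\Delta x^*$. Taking the mass scalar product with $x^*$ and integrating over $[0,T]$, the left-hand side becomes the numerator of $J(x^*)$ after the integration by parts performed at the start of this subsection, which carries no boundary term since $\langle x^*,\dot x^*\rangle$ is $T$-periodic; the right-hand side becomes $\lambda_\varpi^G$ times $\sum_{i<j}\bar\mu_{ij}\xi_{ij}^{x^*}$, the denominator of $J(x^*)$, by the same $\Delta$-identity. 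Hence $J(x^*)=\lambda_\varpi^G$, and since $\lambda_0=\min J$ over the admissible class, $\lambda_0\le J(x^*)=\lambda_\varpi^G$. The two inequalities give the claimed equality.

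There is no deep obstacle here, the argument being the familiar characterisation of a first eigenvalue by its Rayleigh quotient; the points demanding attention are structural rather than computational. First, the self-adjointness of $\Delta$ for the mass inner product, which follows from $\bar\mu_{ij}=\bar\mu_{ji}$, is what forces the Rayleigh quotient of an eigenfunction to coincide with its eigenvalue, and hence makes both directions go through. Second, one must check that the minimizer produced in the previous lemma genuinely lies in the constrained class $x=e^{\J\varpi t}y$ with $y\in\Lambda^G$, so that it is admissible in the definition of $\lambda_\varpi^G$; this is guaranteed because $J$ is minimized over exactly that class. Finally, the vanishing of the boundary term in the integration by parts, already noted above, is the one analytic subtlety that must not be overlooked when $x^*$ is not itself $T$-periodic.
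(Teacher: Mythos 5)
Your proof is correct and follows essentially the same route as the paper's: the minimizer of $J$ is an eigenfunction with eigenvalue $\lambda_0=J(x)$ (giving $\lambda_\varpi^G\le\lambda_0$), and the Rayleigh-quotient identity obtained by integrating by parts shows that any admissible eigenfunction with eigenvalue $\lambda$ satisfies $J(x)=\lambda\ge\lambda_0$ (giving the reverse inequality). The paper states both steps more tersely, but the content is identical.
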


\begin{proof}
   We have seen that if $x(t)=e^{\J\varpi t}y(t)$
   with $y(t)\in\Lambda^G$ minimizes $J(x)$, it
   satisfies the equation
   $-\ddot x=\lambda\Delta x$ with $\lambda=J(x)$.
   The converse comes from the identity
   $$J(x)=\frac{\sum_{i=1}^Nm_i\int_0^T<-\ddot
     x_i,x_i(t)>dt}{\sum_{i=1}^Nm_i\int_0^T<(\Delta
     x_i)(t),x_i(t)>dt}\;\cdot$$
\end{proof}
\medskip
Being the minimum of a positive functional,
$\lambda_{\varpi}^G$ is necessarily non negative;
this allows to
write its definition as follows:
\begin{lemma}
   $\lambda_{\varpi}^G$ is the smallest $\lambda\ge 0$ such that
   there exists a solution
   $\xi(t)$ of the equation $-\ddot \xi=\Delta \xi$ such that
   $y(t)=e^{-\J\varpi t}\xi(\sqrt{\lambda}t)\in\Lambda^G$ (and in
   particular is $T$-periodic).
\end{lemma}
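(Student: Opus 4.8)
The plan is to deduce this characterization from the previous two lemmas by a single time-rescaling, which absorbs the whole $\lambda$-dependence of the eigenvalue equation into the time argument of $\xi$. Combining the definition of $\lambda_\varpi^G$ with the identity $\lambda_0=\lambda_\varpi^G$ and the non-negativity just established, $\lambda_\varpi^G$ is the smallest $\lambda\ge 0$ for which $-\ddot x=\lambda\Delta x$ admits a solution $x(t)=e^{\J\varpi t}y(t)$ with $y(t)\in\Lambda^G$. It therefore suffices, for each fixed $\lambda>0$, to match these solutions with those of the normalized equation $-\ddot\xi=\Delta\xi$, keeping track of the constraint.

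First I would substitute $x(t)=\xi(\sqrt\lambda\,t)$. Because $\Delta$ has constant coefficients along the relative equilibrium and acts only on the spatial components, one has $(\Delta x)(t)=(\Delta\xi)(\sqrt\lambda\,t)$, whereas $\ddot x(t)=\lambda\,\ddot\xi(\sqrt\lambda\,t)$; dividing by $\lambda>0$ turns $-\ddot x=\lambda\Delta x$ into $-\ddot\xi=\Delta\xi$, and the substitution is reversible. Thus, for fixed $\lambda>0$, the map $\xi\mapsto\xi(\sqrt\lambda\,\cdot)$ is a bijection between the solution sets of the two equations, and the associated loop in the rotating frame is exactly $y(t)=e^{-\J\varpi t}x(t)=e^{-\J\varpi t}\xi(\sqrt\lambda\,t)$. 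Since the constraint is imposed on $y$ rather than on $\xi$, membership $y\in\Lambda^G$ (hence $T$-periodicity together with the $G$-symmetries) is carried across this bijection unchanged. Consequently the set of admissible $\lambda>0$ is the same in both formulations, and so are their infima.

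It remains to exclude the value $\lambda=0$, which the rescaling does not reach. There $\xi(\sqrt\lambda\,t)=\xi(0)$ is constant, so $-\ddot\xi=\Delta\xi$ would force $\Delta\xi(0)=0$. But dropping the time integrals in the identity established just before the Definition gives the mass-metric quadratic form $\sum_i m_i\langle(\Delta x)_i,x_i\rangle=\sum_{i<j}\bar\mu_{ij}\|x_i-x_j\|^2$, which vanishes only when all $\|x_i-x_j\|=0$, i.e. when $x=0$ in $\chi=\mathcal D\otimes\R^3$. Hence $\Delta$ is positive definite, $\ker\Delta=0$, and only the trivial $\xi=0$ survives, which is not a loop in the configuration space. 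Thus $\lambda=0$ is never admissible in the rescaled formulation, so its smallest admissible $\lambda\ge 0$ coincides with the smallest admissible positive one, namely $\lambda_\varpi^G$.

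Every step here is routine: outside $\lambda=0$ the statement is simply the observation that a constant-coefficient linear equation rescales cleanly in time, the $\sqrt\lambda$ being dictated by the second derivative. The only place I expect any friction is making the boundary value $\lambda=0$ rigorous, which is why I would isolate the non-degeneracy $\ker\Delta=0$ as the single genuine, if minor, point to verify.
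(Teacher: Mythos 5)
Your central rescaling argument is precisely what the paper intends: the lemma is stated there without proof, as an immediate rewording of the definition once the non-negativity $\lambda_\varpi^G\ge 0$ has been observed, and your bijection $\xi\mapsto\xi(\sqrt{\lambda}\,\cdot)$ between solutions of $-\ddot\xi=\Delta\xi$ and solutions of $-\ddot x=\lambda\Delta x$ for $\lambda>0$, with the constraint $y(t)=e^{-\J\varpi t}x(t)\in\Lambda^G$ carried across unchanged, is exactly that rewording made explicit. That part is fine.

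Your treatment of $\lambda=0$, however, contains a genuine error. From ``$\xi(\sqrt{\lambda}t)=\xi(0)$ is constant'' you infer that $-\ddot\xi=\Delta\xi$ forces $\Delta\xi(0)=0$; it does not. The equation constrains $\xi$ as a function of its \emph{own} time variable, and $\xi(0)$ ranges over all initial positions of solutions, hence over the whole configuration space (e.g. $\xi(s)=\cos(\hat\omega_k s)\,v_k$ with $\Delta v_k=\hat\omega_k^2 v_k$ has $\xi(0)=v_k\ne 0$ and $\Delta\xi(0)\ne 0$). So at $\lambda=0$ the rescaled condition actually reads: there exists $c\ne 0$ with $e^{-\J\varpi t}c\in\Lambda^G$. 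This can genuinely happen --- it is exactly the degenerate situation of the paper's Remark, where $\lambda_{\min}=0$ for the full loop space because of ``motionless bodies'' --- and it matches the original definition at $\lambda=0$, since $-\ddot x=0$ together with the $T$-periodicity of $y=e^{-\J\varpi t}x$ forces the affine solution $x=a+bt$ to have $b=0$. The two formulations therefore agree at $\lambda=0$ because both admit it precisely when coercivity fails, not because neither ever does; your positive-definiteness of $\Delta$ is correct but irrelevant to this point, and your stated conclusion that $\lambda=0$ is never admissible contradicts the paper's own remark. For the groups $G_{r/s}(N,k,\eta)$ used later the issue is moot since $\Lambda^G$ contains no such loops, but as written this step of your proof is false.
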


\noindent{\bf Remark} If $\Lambda$ is the space
of all $H^1$ loops of period $T$,
$\lambda_{min}=0$ because of the lack of coercivity: the min is attained
by motionless bodies at infinity.
\medskip

\noindent{\bf Interpretations of $\Delta$:} up to a harmless factor
2, a transposition changes $\Delta$ into the Wintner-Conley matrix
associated with the central configuration $\bar x$
(cf.~\ref{sec:vvRe}).  In other words, one checks that if
$X=(X_1,\cdots,X_{N})$ is a tangent vector at $\bar x$ to the
configuration space $(\R^3)^N$ (or to its quotient by translations),
$$dU(\bar x)X=\left<X,-\Delta\bar x\right>.$$
Also, if $Z=(Z_1,\cdots,Z_{N})$ is a vertical variation,
$$d^2U(\bar x)(Z,Z)=\left<Z,-\Delta Z\right>,$$
which explains that $\Delta$ is symmetric with respect to the mass
scalar product.  Considered as living in the space $\R^N$ of vertical
variations (resp. in its quotient by translations), the equation
$$\ddot Z=-\Delta Z$$
is the vertical variational equation (VVE) at $\bar x(t)$.

\subsubsection{Minima of $\bar{\mathcal{A}}_\varpi$}

Let $g$ be the real function defined on $\R_+$ by
$$g(s)=\frac{\lambda_{\varpi}^G}{2}s+(\bar UT)^{\frac{3}{2}}s^{-\frac{1}{2}}.$$
The functional $\mathcal{\bar A}_\varpi$ is defined by
$$\mathcal{\bar A}_\varpi(y)=\mathcal{\bar
   A}(x)=g\left(\sum_{i<j}\bar\mu_{ij}\xi_{ij}^x\right).$$
The function
$g$ is strictly convex and its unique minimum is
$$s_{min}=\frac{\bar UT}{(\lambda_{\varpi}^G)^{\frac{2}{3}}}.$$
The minimum of $\mathcal{\bar A}_\varpi(y)=\mathcal{\bar A}(x)$ is attained
when
$\sum_{i<j}\bar\mu_{ij}\xi_{ij}^x=s_{min}$, that is
$$\sum_{i<j}\frac{m_im_j}{\bar
   r_{ij}^3}\int_0^T||x_i(t)-x_j(t)||^2dt=\frac{\bar
   UT}{(\lambda_{\varpi}^G)^{\frac{2}{3}}}\,\cdot$$
This implies the
\begin{lemma}
   A necessary and sufficient condition for the
   relative equilibrium $\bar y(t)$ to be a
   minimizer of the
   action $\mathcal{\bar A}_\varpi$ in $\Lambda^G$,
   is that $\lambda_{\varpi}^G=1$.
\end{lemma}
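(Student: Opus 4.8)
The plan is to reduce everything to comparing the location $s_{min}=\bar U T/(\lambda_\varpi^G)^{2/3}$ of the minimum of $g$ with the value of the argument $\sum_{i<j}\bar\mu_{ij}\xi_{ij}^x$ taken on the relative equilibrium itself. Since $\mathcal{\bar A}_\varpi(y)=g\bigl(\sum_{i<j}\bar\mu_{ij}\xi_{ij}^x\bigr)$ and $g$ is strictly convex with unique minimum at $s_{min}$, a loop $y\in\Lambda^G$ minimizes $\mathcal{\bar A}_\varpi$ exactly when its argument equals $s_{min}$. First I would check that every positive value of this argument is actually attained on $\Lambda^G$: multiplying a loop by a scalar $\lambda>0$ keeps it in $\Lambda^G$ (the group acts linearly through $\rho$) and multiplies $\sum_{i<j}\bar\mu_{ij}\xi_{ij}^x$ by $\lambda^2$, so starting from the nonzero loop $\bar y$ one sweeps out all of $(0,\infty)$. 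Consequently the infimum of $\mathcal{\bar A}_\varpi$ over $\Lambda^G$ is genuinely $g(s_{min})$, and $\bar y$ realizes it if and only if $\sum_{i<j}\bar\mu_{ij}\xi_{ij}^{\bar x}=s_{min}$.

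The second step is the direct evaluation of the argument on $\bar y$. Along the relative equilibrium the mutual distances stay constant and equal to the distances $\bar r_{ij}$ of the central configuration, so $\xi_{ij}^{\bar x}=\int_0^T\bar r_{ij}^2\,dt=T\bar r_{ij}^2$. With the chosen weights $\bar\mu_{ij}=m_im_j\bar r_{ij}^{-3}$ this reduces to
$$\sum_{i<j}\bar\mu_{ij}\xi_{ij}^{\bar x}=T\sum_{i<j}\frac{m_im_j}{\bar r_{ij}}=T\bar U.$$

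Finally I would impose the minimizing condition $\sum_{i<j}\bar\mu_{ij}\xi_{ij}^{\bar x}=s_{min}$, which now reads $T\bar U=\bar U T/(\lambda_\varpi^G)^{2/3}$. Since $\bar U T>0$ this is equivalent to $(\lambda_\varpi^G)^{2/3}=1$, that is $\lambda_\varpi^G=1$ (recall $\lambda_\varpi^G\ge 0$), and both implications of the lemma follow simultaneously.

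Overall the argument is a short computation resting entirely on the convexity analysis of $g$ and the earlier minoration lemmas, so I do not expect a serious obstacle. The one point that genuinely deserves care is the attainment claim in the first step: it is what upgrades the condition from merely sufficient to necessary, and it relies on the freedom to rescale $\bar y$ within the constraint space $\Lambda^G$ without leaving it.
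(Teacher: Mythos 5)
Your proof is correct and follows essentially the same route as the paper's: compare the value $\sum_{i<j}\bar\mu_{ij}\xi_{ij}^{\bar x}=\bar U T$ taken on the relative equilibrium with the location $s_{min}=\bar U T/(\lambda_\varpi^G)^{2/3}$ of the unique minimum of the strictly convex function $g$, and conclude that they coincide if and only if $\lambda_\varpi^G=1$. The one thing you add is the explicit rescaling argument showing that every positive value of the argument is attained within $\Lambda^G$, a point the paper leaves implicit; it is a worthwhile clarification for the necessity direction but does not change the substance of the proof.
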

\begin{proof}
   If $\bar y(t)$ is a minimizer, $\sum_{i<j}\frac{m_im_j}{\bar
     r_{ij}^3}\int_0^T||x_i(t)-x_j(t)||^2dt=\bar UT$ and the above
   identity becomes $\lambda_{\varpi}^G=1$.

   Conversely, if $\lambda_{\varpi}^G=1$, the unique minimum of the
   convex function $g(s)=\frac{1}{2}s+(\bar
   UT)^\frac{3}{2}s^{-\frac{1}{2}}$ is attained at $s_{min}=\bar
   UT$. But $\bar y(t)$ verifies $\forall i,j$, $\xi_{ij}^x=\bar
   r_{ij}^2T$. Hence $\sum_{i<j}\bar\mu_{ij}\xi_{ij}^x=\bar UT$, which
   proves the assertion.
\end{proof}
\smallskip

\begin{lemma} Let $y(t)\in\Lambda^G$ be a solution in the rotating
   frame. Then, the equality
   $\mathcal{A}_\varpi(y)=\mathcal{\bar A}_\varpi(y)$ occurs if and only if
   $\lambda_{\varpi}^G=1$ and $y(t)=\bar y(t)$.
\end{lemma}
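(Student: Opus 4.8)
The plan is to extract the equality case directly from the two minorations just proved, and then to use the hypothesis that $y$ is a genuine solution to turn that geometric information into the identification $y=\bar y$. First I would write $\mathcal{A}_\varpi(y)=\frac12\int_0^T|\dot x|^2\,dt+\int_0^T U(x)\,dt$ and insert the two lower bounds at the common value $s:=\sum_{i<j}\bar\mu_{ij}\xi_{ij}^x$. The kinetic minoration gives $\frac12\int_0^T|\dot x|^2\,dt\ge\frac{\lambda_\varpi^G}{2}s$ and the potential minoration gives $\int_0^T U(x)\,dt\ge(\bar U T)^{3/2}s^{-1/2}$, and their sum is exactly $g(s)=\mathcal{\bar A}_\varpi(y)$. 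Hence the equality $\mathcal{A}_\varpi(y)=\mathcal{\bar A}_\varpi(y)$ is equivalent to equality holding \emph{simultaneously} in both minorations.

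Next I would exploit the two equality cases. Equality in the potential minoration yields, by the cited lemma, a constant $\beta>0$ and some $S(t)\in O(3)$ with $x(t)=\beta\,S(t)\bar x$; in particular all mutual distances of $x$ are constant. Since $y$, and therefore $x=e^{\J\varpi t}y$, solves Newton's equations, a solution with constant mutual distances is a relative equilibrium homothetic to $\bar x$, so (as $\bar x$ is horizontal) $x(t)=\beta\,e^{\J\Omega t}\bar x$ up to an initial isometry, for some rotation rate $\Omega$. Equality in the kinetic minoration gives $-\ddot x=\lambda_\varpi^G\Delta x$; combined with the central-configuration relation $\Delta\bar x=(\omega+\varpi)^2\bar x$ (which follows from $\bar x$ being a relative equilibrium), this reads $\Omega^2=\lambda_\varpi^G(\omega+\varpi)^2$. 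On the other hand, $x$ being a genuine relative equilibrium of size $\beta$ obeys the scaling law $\Omega^2=\beta^{-3}(\omega+\varpi)^2$. Comparing the two identities gives $\lambda_\varpi^G=\beta^{-3}$.

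The remaining step, pinning $\beta=1$, is the one I expect to be delicate. The loop $y(t)=e^{-\J\varpi t}x(t)=\beta\,e^{\J(\Omega-\varpi)t}\bar x$ must lie in $\Lambda^G$, and membership in $\Lambda^G$ rigidly couples the spatial symmetries $(\sigma,\rho)$ to the time translations $\tau$ through the rotation rate of the loop. Since $\bar y(t)=e^{\J\omega t}\bar x$ already realizes this coupling at rate $\omega$, the \emph{same} group $G$ can fix $y$ only if its rate agrees, i.e. $\Omega-\varpi=\omega$. Mere $T$-periodicity would not suffice, because homothetic relative equilibria of other sizes produce loops whose rate differs from $\omega$ by a multiple of $\omega/p$, where $p$ is the order of the rotational symmetry of $\bar x$; it is the full $G$-invariance that discards them. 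With $\Omega=\omega+\varpi$ the scaling law forces $\beta=1$, whence $\lambda_\varpi^G=1$ and $y(t)=e^{\J\omega t}\bar x=\bar y(t)$.

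For the converse I would simply check that $\bar y$ saturates both minorations when $\lambda_\varpi^G=1$: its mutual distances are constant, so the potential minoration is an equality, and $-\ddot{\bar x}=(\omega+\varpi)^2\bar x=\Delta\bar x=\lambda_\varpi^G\Delta\bar x$, so the kinetic minoration is an equality as well; adding them gives $\mathcal{A}_\varpi(\bar y)=\mathcal{\bar A}_\varpi(\bar y)$. Thus the whole statement reduces to the equality cases of the two preceding lemmas together with the scaling law for relative equilibria, the only genuinely nontrivial point being the rate-matching argument of the third paragraph, which is where the richness of the symmetry group $G$ is essential.
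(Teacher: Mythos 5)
Your proposal follows the paper's route for most of its length: you split the equality into simultaneous equality in the kinetic and the potential minorations, invoke the rigidity statement of the potential lemma together with the fact that a rigid-motion solution is a relative equilibrium to write $x(t)=\beta S e^{\J\alpha t}\bar x$, and combine the eigenvalue equation $-\ddot x=\lambda_\varpi^G\Delta x$ with Newton's equation; your relation $\lambda_\varpi^G=\beta^{-3}$ is a correct repackaging of the paper's $\alpha^2=(\omega+\varpi)^2\lambda$. The converse direction is also fine and is the same as the paper's.

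The gap is exactly where you expected it to be delicate: the ``rate-matching'' step. The claim that membership in $\Lambda^G$ forces the rotation rate of a $G$-invariant relative equilibrium homothetic to $\bar x$ to equal $\omega$ is false for the groups this lemma is actually applied to. For the Italian symmetry (treated immediately after this lemma), the loops $\beta e^{\J m\omega t}\bar x$ are $G$-invariant for \emph{every} odd $m$; for the groups $G_{r/s}(N,k,\eta)$, the horizontal analysis of section~\ref{sec:NGon} explicitly exhibits the invariant loops $y_j(t)=a_l\zeta^{j(1-2pk\eta)}e^{2\pi i(\frac{r}{s}-2p)t}$, which are relative equilibria of the $N$-gon with rates $2\pi(\frac{r}{s}-2p)$ for all admissible $p$ --- these are precisely the competitors that the bounds $H_\pm$ of theorem~\ref{theorem:absolute} are designed to control, and if your rate-matching held those bounds would be superfluous. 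What actually excludes a competitor of rate $m\omega$, $m\neq1$, is not the symmetry but the kinetic equality itself: such a loop satisfies $-\ddot x=\lambda\Delta x$ only for $\lambda=\bigl(\frac{m\omega+\varpi}{\omega+\varpi}\bigr)^2$, whereas equality in the kinetic minoration forces the eigenvalue to be exactly $\lambda_\varpi^G$. The paper closes instead via the $T$-periodicity relation $e^{2\pi i(\frac{\omega+\varpi}{\omega}\sqrt{\lambda}-\frac{\varpi}{\omega})}=1$, i.e.\ $\sqrt{\lambda_\varpi^G}=\frac{m\omega+\varpi}{\omega+\varpi}$, whence $\lambda_\varpi^G=1$ for $m=1$ and then $\beta=1$ from Newton's equation. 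You should either reproduce that step or supply an argument based on the minimality of $\lambda_\varpi^G$; the richness of $G$ alone will not deliver the conclusion.
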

\begin{proof}
  $\mathcal{A}_\varpi(y)=\mathcal{\bar A}_\varpi(y)$ if and only if
  the equality holds for both the kinetic and the potential part of
  the action. Equality in the potential part implies that
  $x(t)=e^{\J\varpi t}y(t)=\beta S(t)\bar x$ is a rigid motion. As it
  is a solution, it follows from \cite{AC} that it is a relative
  equilibrium and hence of the form
   $$x(t)=\beta Se^{i\alpha t}\bar x,$$
   where $S$ is a rotation. Now, equality in the
   kinetic part of the action implies that
   $x(t)$ is a solution of the differential equation
   $-\ddot x=\lambda\Delta x$, where $\lambda=\lambda_\varpi^G$. Hence
   $$\beta \alpha^2Se^{i\alpha t}\bar
   x=\lambda\Delta(\beta Se^{i\alpha t}\bar x).$$
   But $\beta Se^{i\alpha
     t}(x_1,\cdots,x_{N})=(\beta Se^{i\alpha
     t}x_1,\cdots,\beta Se^{i\alpha
     t}x_{N})$ and hence
   $$\Delta(\beta  Se^{i\alpha t}\bar
   x)_i=\sum_{j,j\not=i}\frac{m_j}{\bar
     r_{ij}^3}(\beta Se^{i\alpha
     t}x_i-\beta Se^{i\alpha t}x_j)=\beta Se^{i\alpha t}(\Delta\bar x)_i.$$

   Finally,
   $$\beta \alpha^2Se^{i\alpha t}\bar x=\beta Se^{i\alpha
     t}\lambda\Delta\bar x=\beta(\omega+\varpi)^2\lambda Se^{i\alpha t}\bar x,$$
   and $$\alpha^2=(\omega+\varpi)^2\lambda.$$

   So, $x(t)=\beta Se^{i(\omega+\varpi)\sqrt{\lambda}t}\bar x$, and
   $y(t)=e^{-i\varpi t}\beta
   Se^{i(\omega+\varpi)\sqrt{\lambda}t}\bar x$. But
   $y(t)$ is supposed to belong
   to $\Lambda^G$ and, in particular, it must be
   $(T=\frac{2\pi}{\omega})$-periodic, that is for
   all $t$,
   $$e^{-i2\pi\frac{\varpi}{\omega}}e^{-i\varpi
     t}\beta Se^{i(\omega+\varpi)\sqrt{\lambda}t}
   e^{i2\pi\frac{\omega+\varpi}{\omega}\sqrt{\lambda}}\bar x=
   e^{-i\varpi t}\beta Se^{i(\omega+\varpi)\sqrt{\lambda}t}\bar x.$$
   In particular, taking $t=0$, this implies
 
$$e^{i2\pi(\frac{\omega+\varpi}{\omega}\sqrt{\lambda}-\frac{\varpi}{\omega})}=1,
   \quad\hbox{i.e.}\quad  \lambda=1,$$
   provided $\varpi\ne -\omega$. This last condition
   is necessarily satisfied if $y$ is a true
   solution, not
   lying still at infinity.
   From $\lambda=1$ and the fact that $y$ and hence
   also $x$ is a solution, one deduces that
   $\beta=1$
   and hence $y(t)=\bar y(t)$.
\end{proof}
\medskip

We conclude that
\begin{proposition}\label{prop:criterion}
  As long as $\lambda_{\varpi}^G$=1, $\bar y(t)=\bar xe^{\J\omega t}$
  is the sole absolute minimizer of $\mathcal{A}_\varpi$ in
  $\Lambda^G$.
\end{proposition}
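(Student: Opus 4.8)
The plan is to read the proposition as the assembly, at the single value $\lambda_\varpi^G = 1$, of the three properties (1)--(3) demanded of $\bar{\mathcal{A}}_\varpi$ at the start of the section, followed by the abstract comparison argument recorded there. So the real work is to check that each of the three properties is in force and then to chase the resulting chain of inequalities; the analytic substance has already been parcelled out among the preceding lemmas, and the proposition is essentially their corollary.

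First I would record property (1), the pointwise domination $\mathcal{A}_\varpi(y) \ge \bar{\mathcal{A}}_\varpi(y)$ valid on all of $\Lambda^G$, as the sum of half the kinetic minoration lemma and the potential minoration lemma: writing $s = \sum_{i<j}\bar\mu_{ij}\xi_{ij}^x$, these give $\mathcal{A}_\varpi(y) = \int_0^T(\tfrac12|\dot x|^2 + U(x))\,dt \ge \tfrac{\lambda_\varpi^G}{2}s + (\bar U T)^{3/2}s^{-1/2} = g(s) = \bar{\mathcal{A}}_\varpi(y)$, for \emph{every} value of $\lambda_\varpi^G$. Property (2), that $\bar{\mathcal{A}}_\varpi$ attains its minimum at $\bar y$, is exactly the convexity lemma on $g$ combined with the criterion lemma, which pins $\lambda_\varpi^G = 1$ as the precise condition making $\sum\bar\mu_{ij}\xi_{ij}^{\bar x} = \bar U T = s_{\min}$. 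Property (3), the rigidity statement that for a \emph{solution} $y \in \Lambda^G$ the equality $\mathcal{A}_\varpi(y) = \bar{\mathcal{A}}_\varpi(y)$ forces both $\lambda_\varpi^G = 1$ and $y = \bar y$, is the last lemma verbatim.

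With these three in hand I would run the comparison. For arbitrary $y \in \Lambda^G$, properties (1) and (2) chain to $\mathcal{A}_\varpi(y) \ge \bar{\mathcal{A}}_\varpi(y) \ge \bar{\mathcal{A}}_\varpi(\bar y)$; and since $\bar y$ is the relative equilibrium, its mutual distances are constant (equality in the potential lemma) and it satisfies $-\ddot{\bar x} = \Delta\bar x$, which is the equality case $-\ddot x = \lambda_\varpi^G\Delta x$ of the kinetic lemma exactly when $\lambda_\varpi^G = 1$; hence $\bar{\mathcal{A}}_\varpi(\bar y) = \mathcal{A}_\varpi(\bar y)$ and therefore $\mathcal{A}_\varpi(y) \ge \mathcal{A}_\varpi(\bar y)$, so $\bar y$ is an absolute minimizer. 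For uniqueness I would take any absolute minimizer $y \in \Lambda^G$ and squeeze $\mathcal{A}_\varpi(\bar y) = \mathcal{A}_\varpi(y) \ge \bar{\mathcal{A}}_\varpi(y) \ge \bar{\mathcal{A}}_\varpi(\bar y) = \mathcal{A}_\varpi(\bar y)$, forcing $\mathcal{A}_\varpi(y) = \bar{\mathcal{A}}_\varpi(y)$, and then apply property (3) to conclude $y = \bar y$.

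The main obstacle I anticipate is the one hidden step in the uniqueness half: property (3) applies only to $y$ that are genuine solutions in the rotating frame, so I must first argue that an absolute minimizer of $\mathcal{A}_\varpi$ over the linear subspace $\Lambda^G$ is in fact a critical point of $\mathcal{A}_\varpi$ on the full loop space $\Lambda$, hence a Newtonian solution. This is precisely the place for Palais's principle of symmetric criticality, legitimate here because $G$ acts on $\Lambda$ by isometries preserving $\mathcal{A}_\varpi$ with $\Lambda^G$ as its fixed-point set. I would also keep track of the standing hypothesis $\varpi \ne -\omega$ invoked in the last lemma, which rules out the degenerate family member collapsed at infinity and so guarantees that the minimizer under comparison is a true solution.
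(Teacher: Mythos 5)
Your proposal follows essentially the same route as the paper, which indeed presents the proposition as the direct assembly of the three required properties of $\bar{\mathcal{A}}_\varpi$ via the Jensen (potential) lemma, the Poincar\'e-type (kinetic) lemma, the $\lambda_\varpi^G=1$ criterion lemma, and the final equality-case lemma. Your explicit appeal to Palais's symmetric criticality to ensure that a minimizer in $\Lambda^G$ is a genuine solution (so that the last lemma applies) fills a step the paper leaves implicit, and is a welcome clarification rather than a deviation.
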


\noindent{\bf Remarks} 1) An elementary computation shows that

\begin{eqnarray*}
   \mathcal{A}_\varpi(\bar
   y(t))=\frac{1}{2M}(\omega+\varpi)^2\sum_{i<j}m_im_j\bar
   r_{ij}^2T+\bar UT
   &=&\frac{\sum_{i<j}m_im_j\bar r_{ij}^2}{2MI}\bar
   UT+\bar UT\\
   &=&\frac{3}{2}\bar UT,
\end{eqnarray*}
while
$$\mathcal{\bar A}_\varpi(\bar
y(t))=(\frac{\lambda_\varpi^G}{2}+1)\bar UT$$
and
$$\min\mathcal{\bar A}_\varpi=
(\lambda_\varpi^G)^{\frac{1}{3}}(\frac{\lambda_\varpi^G}{2}+1)\bar
UT= (\lambda_\varpi^G)^{\frac{1}{3}}\mathcal{\bar A}_\varpi(\bar
y(t)).$$
\goodbreak

\noindent 2) In the case of the \textit{choreography symmetry}, that
is of $G=\Z/N\Z$ acting by circularly permuting $N$ equal masses after
one $N$th of the period, this proposition is a rewording of \cite{BT}.
It is proved there that it implies that, for any $N$, the regular
$N$-gon is the sole absolute minimizer of the action among
choreographies.

\subsubsection{Not saying anything on the Italian symmetry}

As soon as relative equilibria with different central configurations
are allowed by the group action, the method cannot work: a criterium
which is essentially local cannot detect the difference in action
between relative equilibria whose configurations are unrelated to each
other. This is coherent with the fact that no conclusion can be drawn
from the property that the criterion for absolute minimization does
not apply.  \medskip

In order to illustrate this remark we look at the Italian symmetry
$$x(t+\frac{T}{2})=-x(t).$$
It is the simplest invariance requirement which implies coercivity and
it possesses two properties needed for our purpose:

\textit{i}) any relative equilibrium satisfies the Italian symmetry;

\textit{ii}) any solution of the (VVE) of a relative equilibrium
solution satisfies the Italian symmetry.  \smallskip

\noindent \textit{In order to compare easily with the formulae in the
  next sections, we consider loops in the configuration space which
  become 1-periodic in a frame rotating with frequency $\varpi$.
  Moreover, when $G=\Z/2\Z$ with the Italian action, we shall use the
  notation $\lambda_\varpi^{It}$ instead of $\lambda_\varpi^G$.}
\smallskip

Let $ x(t)=e^{\J (2\pi+\varpi)t}\bar x$ be some relative equilibrium
($\bar x$ can be any central configuration). Let
$\omega_1,\omega_2,\cdots$ be the \textit{vertical frequencies} (i.e.
the ones of the Vertical Variational Equation $-\ddot z=\Delta z$) of
a relative equilibrium with similar configuration and frequency
$\omega_1$ (see \cite{CF1,Mo}).

\begin{lemma}
   Whatever the central configuration $\bar x$, and whatever the
   dimension (two or three) of the ambiant space, whatever the value of
   $\varpi$,
   $$\sqrt{\lambda_\varpi^{It}}\le\inf_{k}\frac{\omega_1}{\omega_k}.$$
\end{lemma}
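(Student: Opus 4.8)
The plan is to bound $\lambda_\varpi^{It}$ from above by producing, for each vertical frequency $\omega_k$, a single explicit admissible loop that realises the level $\lambda=(\omega_1/\omega_k)^2$, and then to optimise over $k$. Recall that $\lambda_\varpi^{It}$ is by definition the \emph{smallest} $\lambda$ for which the equation $-\ddot x=\lambda\Delta x$ has a solution $x(t)=e^{\J\varpi t}y(t)$ with $y\in\Lambda^{It}$. Hence exhibiting one such solution at $\lambda=(\omega_1/\omega_k)^2$ gives $\lambda_\varpi^{It}\le(\omega_1/\omega_k)^2$ for that $k$; taking the infimum over the finitely many $k$ then yields $\sqrt{\lambda_\varpi^{It}}\le\inf_k\omega_1/\omega_k$.

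The decisive observation is that $\Delta$ acts on $\mathcal D\otimes\R^3$ one Cartesian coordinate at a time, identically on the horizontal plane and on the vertical line. Hence a real eigenvector $W$ of $\Delta$ for a vertical frequency, say $\Delta W=\tilde\omega_k^{\,2}W$ where $\tilde\omega_1=2\pi+\varpi,\tilde\omega_2,\dots$ denote the vertical frequencies of $\bar x$ itself, may just as well be placed in the horizontal plane $\C$. I would then test the defining equation on the purely horizontal circular motion
$$x(t)=W\,e^{i(2\pi+\varpi)t},$$
written in complex notation for the plane, for which $\Delta x=\tilde\omega_k^{\,2}x$ and $-\ddot x=(2\pi+\varpi)^2x$. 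Thus $x$ solves $-\ddot x=\lambda\Delta x$ with $\lambda=(2\pi+\varpi)^2/\tilde\omega_k^{\,2}=(\tilde\omega_1/\tilde\omega_k)^2$; and since $\tilde\omega_1=2\pi+\varpi$ is the relative-equilibrium frequency of $\bar x$ while ratios of vertical frequencies are invariant under rescaling of the configuration, this is exactly $(\omega_1/\omega_k)^2$.

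It then remains to check admissibility, namely that $y=e^{-\J\varpi t}x$ lies in $\Lambda^{It}$. Because $\J$ is trivial on the vertical line and equals multiplication by $i$ on the horizontal plane, the frame rotation cancels precisely the $\varpi$ part of the temporal frequency, leaving
$$y(t)=e^{-i\varpi t}W\,e^{i(2\pi+\varpi)t}=W\,e^{2\pi i t}.$$
This loop is $1$-periodic and obeys $y(t+\tfrac12)=-y(t)$, which is the Italian symmetry, so $y\in\Lambda^{It}$ and the construction is complete.

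The step I expect to be the crux, and the reason the statement holds for \emph{every} $\varpi$ and in \emph{either} dimension, is the choice of a horizontal rather than a vertical eigenmode. A vertical test mode is untouched by the frame rotation and would yield only a $\varpi$-dependent estimate (insufficient for $\varpi<0$); placing the eigenvector in the plane lets the rotation lower the temporal frequency from $2\pi+\varpi$ down to $2\pi$, which enforces the Italian symmetry uniformly in $\varpi$ while keeping $\lambda=(\omega_1/\omega_k)^2$ fixed, and which uses only two of the three ambient directions. The one bookkeeping point to get right is the normalisation making $2\pi+\varpi$ the first vertical frequency of the scaled configuration $\bar x$, so that $(2\pi+\varpi)/\tilde\omega_k$ really is the scale-invariant ratio $\omega_1/\omega_k$.
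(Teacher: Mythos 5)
Your proposal is correct and follows essentially the same route as the paper: both arguments exhibit a \emph{horizontal} eigenmode of $\Delta$ which, after the frame rotation, becomes the loop $W_ke^{2\pi it}$ --- manifestly $1$-periodic and Italian-symmetric --- realising $\sqrt{\lambda}=\omega_1/\omega_k$ (the paper derives the full family $\sqrt{\lambda}=\frac{\omega_1}{\omega_k}\frac{2\pi m+\varpi}{2\pi+\varpi}$ with $m$ odd and then takes $m=1$, which is exactly the solution you write down directly). Your closing remark on why a vertical test mode would not suffice matches the role the horizontal placement plays in the paper's computation.
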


\begin{corollary}
   The criterion for being an absolute minimizer does not apply to the
   Italian symmetry as soon as $N\ge 4$: $\lambda_\varpi^{It}$ is
   always strictly smaller than 1.
\end{corollary}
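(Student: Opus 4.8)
The plan is to derive the Corollary directly from the Lemma that immediately precedes it, combined with the Pacella--Moeckel Lemma of Section~\ref{sec:known}. By Proposition~\ref{prop:criterion}, the criterion guaranteeing that the relative equilibrium $\bar y(t)$ is the sole absolute minimizer of $\mathcal{A}_\varpi$ in $\Lambda^{It}$ is precisely the equality $\lambda_\varpi^{It}=1$. Hence, to show that this criterion never applies, it suffices to establish the strict inequality $\lambda_\varpi^{It}<1$ for \emph{every} value of $\varpi$.

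First I would invoke the preceding Lemma, which bounds the smallest eigenvalue by the vertical frequency ratios, $\sqrt{\lambda_\varpi^{It}}\le\inf_{k}\frac{\omega_1}{\omega_k}$. The essential feature to notice is that this bound is uniform in $\varpi$: whatever we prove about the right-hand side will hold simultaneously in all the rotating frames. Then I would bring in the hypothesis $N\ge 4$ through the Pacella--Moeckel Lemma, which asserts that for any central configuration of at least four bodies at least one normal frequency satisfies $\omega_k>\omega_1$. For that index the ratio $\frac{\omega_1}{\omega_k}$ is strictly less than $1$, so the infimum over $k$ is itself strictly less than $1$. Combining the two facts yields $\lambda_\varpi^{It}\le\bigl(\inf_{k}\tfrac{\omega_1}{\omega_k}\bigr)^{2}<1$, independently of $\varpi$. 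Since $\lambda_\varpi^{It}$ can therefore never equal $1$, the criterion of Proposition~\ref{prop:criterion} is never met, which is exactly the assertion.

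I do not expect a genuine obstacle here: once the Lemma is granted, the argument is a one-line combination with Pacella--Moeckel, and the nonnegativity of $\lambda_\varpi^{It}$ (it being the minimum of a positive functional) makes the square root harmless. The only point deserving attention is the \emph{strictness} of the final inequality, which rests entirely on the existence of a single vertical frequency exceeding $\omega_1$. This is precisely the content of the Pacella--Moeckel Lemma, and it fails exactly when $N\le 3$, in accordance with the parenthetical exception for $N=3$ noted earlier in the text.
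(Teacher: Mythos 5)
Your argument is correct and complete: the preceding Lemma gives $\sqrt{\lambda_\varpi^{It}}\le\inf_k\frac{\omega_1}{\omega_k}$ uniformly in $\varpi$, the nonnegativity of $\lambda_\varpi^{It}$ makes squaring harmless, and the Pacella--Moeckel lemma supplies, for $N\ge4$, an index $k$ with $\omega_k>\omega_1$, whence $\lambda_\varpi^{It}<1$ and the hypothesis of Proposition~\ref{prop:criterion} can never be met. This is the shortest reading of why the statement is a corollary of the Lemma. The paper's written proof, however, does not invoke the Lemma's eigenvalue bound at all: it takes the \emph{vertical} eigenmode $\zeta(t)=\Re(W_ke^{i\omega_1t})$, which satisfies the Italian symmetry, recalls from~\ref{sec:known} that $d^2\mathcal{A}(\bar x(t))(\zeta,\zeta)$ is proportional to $\omega_1^2-\omega_k^2$ and hence negative for some $k$ by Pacella--Moeckel, and concludes that the relative equilibrium is not even a local minimizer in $\Lambda^{It}$; the contrapositive of Proposition~\ref{prop:criterion}, combined with the obvious bound $\lambda_\varpi^{It}\le1$ (the relative equilibrium itself being a competitor in the definition of $\lambda_\varpi^{It}$), then yields $\lambda_\varpi^{It}<1$. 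The two routes rest on the same input; yours is more economical and stays entirely on the eigenvalue side (where the Lemma's bound actually comes from horizontal test loops), while the paper's detour through the second variation establishes the stronger, more geometric fact --- exploited in Remark~1 immediately afterwards --- that for $N\ge4$ the relative equilibrium already fails to be a local minimizer with respect to vertical Italian-symmetric variations.
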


\noindent{\bf Proof of the lemma.} We shall denote by
$\hat\omega_1=2\pi+\varpi,\; \hat\omega_2,\cdots$ the vertical
frequencies associated with the relative equilibrium $\bar x(t)=\bar x
e^{i(2\pi+\varpi)t}$, i.e.
$$\hat\omega_k=\frac{\omega_k}{\omega_1}(2\pi+\varpi).$$
Let us consider a horizontal solution $\xi(t)$ of the equation
$-\ddot\xi=\Delta\xi$, of the form (we identify the horizontal plane
to the complex plane) $\xi(t)=W_ke^{i\hat\omega_k}t$. The periodicity
condition of $y(t)=e^{i\varpi t}\xi(\sqrt{\lambda}t)$ and the Italian
symmetry are satisfied if and only if it is of the form
$y(t)=W_ke^{m2\pi it}$ with $m$ odd, i.e. if
$\hat\omega_k\sqrt{\lambda}-\varpi=m2\pi$, that is
$\frac{\omega_k}{\omega_1}(2\pi+\varpi)=m2\pi,$ that is
$$\sqrt{\lambda}=
\frac{\omega_1}{\omega_k}\left(\frac{m2\pi+\varpi}{2\pi+\varpi}\right).$$
One concludes by choosing $m=1$. \smallskip

{\bf Proof of the corollary.} Let $Z(t)=\Re(W_ke^{i\omega_kt})$ be a
solution of the (VVE) equation associated with the relative
equilibrium $x(t)=A\bar x e^{i\omega_1t}$ with configuration similar
to $\bar x$ and frequency $\omega_1$. Here, $W_k$ is a complex
eigenvector with eigenvalue $\omega_k^2$ of the associated $\Delta$
operator.  Finally, let
$$\zeta(t)=Z(\frac{\omega_1}{\omega_k}t)=\Re(W_ke^{i\omega_1 t}).$$
Then, for the action during time $T=\frac{2\pi}{\omega_1}$, we have
proved in~\ref{sec:known} that
$$d^2\mathcal{A}(
x(t))(\zeta(t),\zeta(t))=\frac{T}{2}|W_k|^2(\omega_1^2-\omega_k^2).$$
The corollary follows from the Pacella-Moeckel lemma recalled
in~\ref{sec:known}.

\paragraph{Remark 1} Not knowing the result of Pacella-Moeckel, one
could have deduced that, in order that the relative equilibrium
solution $x(t)=x_0e^{\J \omega_1 t}$ be the sole absolute minimizer of
the action in $\Lambda^{It}$, it is enough that it be a local
minimizer with respect to the vertical variations, which would have
been a mirific result indeed if it had not been an empty one, except
in the case of the equilateral relative equilibrium of 3 bodies.

\paragraph{Remark 2} In the following sections the choice of the
groups $G_{r/s}(N,k,\eta)$ will eliminate the non trivial horizontal
variations, since the only horizontal loops which are invariant are
relative equilibria of the regular $N$-gon, possibly with a different
frequency and a reordering of the bodies.  The same was true in the
case of choreographies, studied in \cite{BT}: the regular $N$-gon is
the sole central configuration whose relative equilibrium motions are
choreographies.

\textit{The main difference with the study below of the groups
  $G_{r/s}(N,k,\eta)$ lies in the fact that the two integers $m$ and
  $k$ will not be any more independant!} 

\clearpage

\section*{THE CASE OF THE REGULAR $N$-GON}
\addcontentsline{toc}{section}{THE CASE OF THE REGULAR $N$-GON}

{\it CAUTION: It will be convenient from now on to number the bodies
   from 0 to $N-1$ considered as elements of ${\Z}/{n\Z}$. Moreover, we
   suppose that the mass of each body is equal to 1.}

\section{Infinitesimal continuation}
\subsection{Vertical variations}

We focus on the central configuration $C = (1, \zeta, ...,
\zeta^{N-1})$, with $N\geq 3$ and $\zeta=e^{i 2\pi/N}$, in the
horizontal plane identified with $\C$, and on the associated relative 
equilibrium
$$\bar x(t)=(\bar x_0(t),\bar x_1(t),\ldots,\bar
x_{N-1}(t))=e^{\J \omega_1t} \, C$$  
of the equal mass $N$-body problem.

Let
$$\rho_k = r_{i,i+k} =|\zeta^k-1|,\; k=1,\ldots,N-1.$$
In particular, $\rho_{N-k}=\rho_k$ if $k\le[N/2]$. The vertical
variational equation (VVE) reads
$$\ddot z_i=\sum_{j\not=i}{z_j-z_i\over \rho_{|j-i|}^3}$$
or,
$$\begin{pmatrix}
   \ddot z_0\\ \ddot z_1\\ \vdots\\ \ddot z_{N-1}
\end{pmatrix} =
\begin{pmatrix}
   -\sum{1\over \rho_k^3}&{1\over \rho_1^3} &{1\over
     \rho_2^3}&\cdots&{1\over \rho_{N-1}^3}\\
   {1\over \rho_{N-1}^3}&-\sum{1\over \rho_k^3} &{1\over
     \rho_1^3}&\cdots&{1\over \rho_{N-2}^3}\\
   \vdots&\vdots&\vdots&\vdots&\vdots\\
   {1\over \rho_1^3}&{1\over \rho_2^3}&{1\over
     \rho_3^3}&\cdots&-\sum{1\over \rho_k^3}
\end{pmatrix}
\begin{pmatrix}
   z_0\cr z_1\\ \vdots\\ z_{N-1}
\end{pmatrix}.
$$
Such a circulant matrice has an explicit basis of eigenvectors:
a basis of complex eigenvectors is formed by the
$$X_k=(\zeta^k,\zeta^{2k},\ldots,\zeta^{Nk}=1),\quad k=0,\ldots,N-1,$$
with eigenvalues
$$\lambda_k=-\sum{1\over\rho_j^3}+
{1\over\rho_1^3}\zeta^k+{1\over\rho_2^3}\zeta^{2k}+\ldots+
{1\over\rho_{N-1}^3}\zeta^{(N-1)k}= - \sum_{1\leq j \leq N-1}
\frac{1-\zeta^{jk}}{\rho_j^3}.$$
In particular, $\lambda_1=-\omega_1^2$ corresponds to the frequency
of the relative equilibrium. \smallskip

The cases $N$ odd and $N$ even behave slightly differently from one
another.

{\it (i) Case $N=2n+1$ odd:} the $4n$-dimensional phase space ${\cal
   D}^2$ of the variational equation splits into the direct sum of $2n$
invariant eigenplanes.  Indeed, there are exactly $n$ distinct mutual
distances
$$\rho_1=\rho_{N-1},\rho_2=\rho_{N-2},\ldots,\rho_n=\rho_{N-n}=\rho_{n+1}$$
(in increasing order: see lemma~\ref{evOrder} below) and
$\bar\zeta^k=\zeta^{(N-k)}$. Hence, for $k=1,\ldots,n$,
$$\lambda_k=\lambda_{N-k}=\sum_{j=1}^n{{1\over\rho_j^3}
   (\zeta^{jk}+\bar\zeta^{jk}-2)}
=-2\sum_{j=1}^n{{1\over\rho_j^3}(1-\cos{2\pi jk\over N})}.$$ (the
eigenvalue $\lambda_0=0$, with eigenvector $X_0=(1,1,\ldots,1)$,
disappears after the quotient by vertical translations).

To each eigenvalue $\lambda_k$ ($1\le k\le n$) of the circulant
matrix, corresponds the 4-dimensional space of solutions of the (VVE)
consisting in solutions of the following form, where the amplitude $A$
and the phase $t_0$ are parameters and $\omega_k=\sqrt{-\lambda_k}$ :
$$z(t)=\left(A\, \Re(e^{i\omega_k(t-t_0)}),A\,\Re(\zeta^ke^{i\omega_k(t-t_0)}),
   \ldots,A\,\Re(\zeta^{k(N-1)}e^{i\omega_k(t-t_0)})\right)$$ and
$$z(t)=\left(A\, \Re(e^{i\omega_k(t-t_0)}),A\,
   \Re(\bar\zeta^ke^{i\omega_k(t-t_0)}),
   \ldots,A\, \Re(\bar\zeta^{k(N-1)}e^{i\omega_k(t-t_0)})\right).$$

{\it (ii) Case $N=2n$ even:} the $4n-2$-dimensional phase space
splits into the sum of a $4(n-1)$-dimensional invariant space in which
the solutions are of the same form as in the odd case and an invariant
plane which corresponds to the eigenvalue $\lambda_n$ of the circulant
matrix. The eigenvalue $\lambda_n$ plays a special role for the sole
reason that $\zeta^n=-1$ is real and hence corresponds to a
2-dimensional space of solutions and not a 4-dimensional one.

\subsection{The vertical eigenvalues}

Partly following the method of~\cite{PW,Mo}, we prove the following
fact, which will be used in sections~\ref{sec:localCont}
and~\ref{sec:global}. 

\begin{lemma}\label{evOrder}
  The half sequence $(\lambda_k)_{1\leq k \leq N/2}$ is negative and
  decreasing.
\end{lemma}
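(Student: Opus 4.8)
The plan is to pass to a real trigonometric form, read off negativity immediately, and reduce monotonicity to the positivity of a single family of sums, whose sign I will then control by a concavity argument in the index. First I would rewrite $\lambda_k$. Since $\rho_j=|\zeta^j-1|=2\sin(\pi j/N)$ and $\rho_{N-j}=\rho_j$, the imaginary parts in $\lambda_k=-\sum_{j=1}^{N-1}(1-\zeta^{jk})/\rho_j^3$ cancel, and using $1-\cos\varphi=2\sin^2(\varphi/2)$ one gets
$$\lambda_k=-\frac14\sum_{j=1}^{N-1}\frac{\sin^2(\pi jk/N)}{\sin^3(\pi j/N)}.$$
Writing $\theta_j=\pi j/N$, every summand is nonnegative and the $j=1$ term is strictly positive, so $\lambda_k<0$: this settles the negativity part at once.

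For the monotonicity it suffices to prove $\lambda_k>\lambda_{k+1}$ for $1\le k<N/2$. Applying $\sin^2 A-\sin^2 B=\sin(A+B)\sin(A-B)$ to the forward difference gives
$$\lambda_k-\lambda_{k+1}=\tfrac14\,S_{2k+1},\qquad S_m:=\sum_{j=1}^{N-1}\frac{\sin(m\theta_j)}{\sin^2\theta_j}.$$
Thus the whole statement reduces to showing $S_m>0$ for the odd integers $m=2k+1$ occurring in the range, namely $3\le m\le N-1$ when $N$ is even and $3\le m\le N-2$ when $N$ is odd.

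The key step, which I expect to be the real engine of the proof, is a three-term recursion in $m$. From $\sin((m+2)\theta)+\sin((m-2)\theta)=2\cos(2\theta)\sin(m\theta)$ together with $\cos 2\theta=1-2\sin^2\theta$, dividing by $\sin^2\theta_j$ and summing over $j$ yields
$$S_{m+2}+S_{m-2}=2S_m-4\sum_{j=1}^{N-1}\sin(m\theta_j).$$
An elementary geometric-sum computation gives $\sum_{j=1}^{N-1}\sin(m\theta_j)=\cot(m\pi/2N)>0$ for every odd $m$ with $1\le m\le N-1$, whence
$$S_{m+2}-2S_m+S_{m-2}=-4\cot\frac{m\pi}{2N}<0.$$
In other words, along the odd values of $m$ the sequence $S_m$ is strictly concave; since a strictly concave sequence lies above the chord joining its endpoint values, positivity of $S_m$ on the whole range follows once it is known at the two ends.

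The left end is immediate, $S_1=\sum_j 1/\sin\theta_j>0$. The right end is the only delicate point, and is where I expect the main obstacle to lie. For $N$ odd and $m=N-2$ one computes, via $\sin((N-2)\theta_j)=2(-1)^{j+1}\sin\theta_j\cos\theta_j$ and the pairing $j\leftrightarrow N-j$, that $S_{N-2}=4\sum_{j=1}^{n}(-1)^{j+1}\cot\theta_j$, an alternating sum of the strictly decreasing positive quantities $\cot\theta_1>\cdots>\cot\theta_n>0$, hence positive. For $N$ even and $m=N-1$ one gets $S_{N-1}=\sum_{j=1}^{N-1}(-1)^{j+1}/\sin\theta_j$, which the same pairing reduces to $2\sum_{j=1}^{n-1}(-1)^{j+1}\csc\theta_j+(-1)^{n+1}$; the alternating cosecant sum is again positive with decreasing terms, and an elementary lower bound on its leading terms absorbs the isolated correction $(-1)^{n+1}$ (the case $N=4$ being checked by hand). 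Concavity together with these two endpoint signs then gives $S_m>0$ throughout, proving that $(\lambda_k)_{1\le k\le N/2}$ is strictly decreasing. The genuine difficulty is thus not the concavity mechanism but beating the isolated middle term against the alternating series in the even case.
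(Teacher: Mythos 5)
Your proof is correct and follows essentially the same route as the paper: both reduce monotonicity to positivity of the first difference $\lambda_k-\lambda_{k+1}$, establish concavity of that difference along the index (equivalently, convexity of $\delta\lambda_k$, the paper's $\delta^3\lambda_k\ge 0$) via the same closed-form second difference --- your $-4\cot(m\pi/2N)$ is exactly what the paper's product of sines becomes after linearization --- and then check positivity at the two endpoints by the same alternating-sum estimates. The only cosmetic differences are that you sum over $j=1,\dots,N-1$ rather than over the half range with weights $\epsilon_j$, which lets you treat odd and even $N$ uniformly until the endpoint evaluation, and that for odd $N$ you evaluate the right endpoint directly at $m=N-2$ instead of using the vanishing of the sum at $m=N$.
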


\paragraph{Case of an odd number of bodies: $N=2n+1$.}

Using the fact that $\zeta^{N-j} = \bar\zeta^j$ and $\rho_{N-j} =
\rho_j$, and introducing $\theta = 2\pi/N$, we get
$$\lambda_k =- \frac{1}{4} \sum_{j=1}^n \frac{1-\cos
   jk\theta}{\sin^3 \frac{j\theta}{2}} \quad (1\leq k \leq n).$$
Three successive discrete derivations yield
$$\begin{array}[t]{rlll}
   \delta\lambda_k &=& \lambda_{k+1}-\lambda_k = -\frac{1}{2}
   \sum_{j=1}^n \frac{\sin j(2k+1)\frac{\theta}{2}}{\sin^2
     j\frac{\theta}{2}} &(1\leq k \leq n-1)\\
   \delta^2\lambda_k &=& \delta\lambda_{k+1} - \delta\lambda_k = -
   \sum_{j=1}^n \frac{\cos j(k+1)\theta}{\sin j\frac{\theta}{2}}
   &(1\leq k \leq n-2) \\
   \delta^3\lambda_k &=& \delta^2\lambda_{k+1} - \delta^2\lambda_k = 2
   \sum_{j=1}^n \sin j(2k+3)\frac{\theta}{2} &(1\leq k \leq n-3)\\
   &=& 2 \frac{\sin \left((2k+3)(n+1) \frac{\theta}{4}\right) \sin
     \left( (2k+3)n \frac{\theta}{4} \right)}{\sin
     (2k+3)\frac{\theta}{4}}.
\end{array}
$$

We want to show that $\lambda_k$ decreases with $k$, or that
$\delta\lambda_k$ is negative. The reason for derivating three times
is that $\delta^3\lambda_k$ is a trigonometric polynomial and that its
sign is related to the convexity of $\delta\lambda_k$.

Notice that these sequences still make sense for larger values of $k$,
provided that the denominators do not vanish. Besides, the property
that $\delta\lambda_k$ is $\leq 0$ for extremal values of $k$ is
easier to check outside the initial interval of definition:
$$\delta \lambda_0 = - \displaystyle\frac{1}{2} \sum_{j=1}^n \frac{1}{\sin
   \left(j \frac{\theta}{2}\right)} <0$$
(because $j\frac{\theta}{2} = j\frac{\pi}{2n+1} < \pi$) and
$$\delta\lambda_n = -\frac{1}{2} \sum_{j=1}^n \frac{\sin j\pi}{\sin^2
   j \frac{\theta}{2}} = 0.$$ Hence it suffices to show that $\delta
\lambda_k$ is convex with respect to $k$, i.e. $\delta^3\lambda_k \geq
0$, over $\{0,...,n-2\}$ (see figure~\ref{fig:dlambda}).

\begin{figure}[h]
    \centering
    \includegraphics[width=7cm]{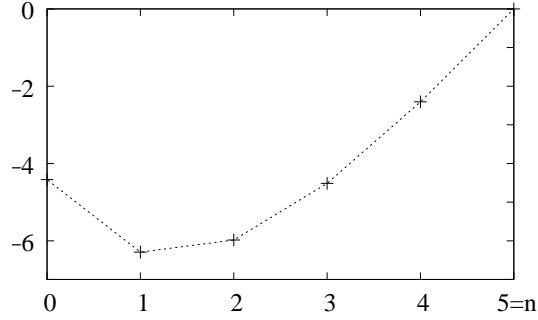}
    \caption{Graph of $\delta\lambda_k$ for $n=5$ ($2n+1=11$ bodies)}
    \label{fig:dlambda}
\end{figure}

In the above expression of $\delta^3\lambda_k$, the denominator is the
sine of
$$0 <  (2k+3) \frac{\theta}{4} \leq \frac{2n-1}{2n+1} \cdot
\frac{\pi}{2} < \frac{\pi}{2} \quad \mbox{if} \quad 0\leq k \leq
n-2,$$ and thus is $>0$. It remains to check that the numerator
$$\nu_k = \sin \left((2k+3)(n+1) \frac{\theta}{4}\right) \sin
     \left( (2k+3)n \frac{\theta}{4} \right)$$
itself is $\geq0$, which follows from linearizing:
$$\nu_k = \frac{1}{2} \cos \frac{k+\frac{3}{2}}{n+\frac{1}{2}}
\frac{\pi}{2} \geq 0 \quad \mbox{if} \quad 0\leq k \leq n-2.$$

\paragraph{Case of an even number of bodies: $N=2n$.}

In the sum defining $\lambda_k$, all terms come pairwise (indexes $j$
and $N-j$), except that of the $n$-th body. Let $\epsilon_j = 1/2$ if
$j=n$ and $\epsilon_j = 1$ otherwise. Then
$$\lambda_k =- \frac{1}{4} \sum_{j=1}^n \epsilon_j \frac{1-\cos
   jk\theta}{\sin^3 \frac{j\theta}{2}} \quad (1\leq k \leq n).$$
It follows that
$$\begin{array}[t]{rlll}
   \delta\lambda_k &=& -\frac{1}{2}
   \sum_{j=1}^n \epsilon_j \frac{\sin j(2k+1)\frac{\theta}{2}}{\sin^2
     j\frac{\theta}{2}} &(1\leq k \leq n-1)\\
   \delta^2\lambda_k &=& - \sum_{j=1}^n \epsilon_j \frac{\cos
     j(k+1)\theta}{\sin j\frac{\theta}{2}} &(1\leq k \leq n-2) \\
   \delta^3\lambda_k &=& 2 \sum_{j=1}^n \epsilon_j \sin
   j(2k+3)\frac{\theta}{2} &(1\leq k \leq n-3)\\
   &=& 2 \frac{\sin \left((2k+3)(n+1) \frac{\theta}{4}\right) \sin
     \left( (2k+3)n \frac{\theta}{4} \right)}{\sin
     (2k+3)\frac{\theta}{4}} + (-1)^k.
\end{array}
$$
Again, we will check that $\delta\lambda_k$ is $<0$ at the boundary of
some interval of integers containing $\{1,...,n-1\}$, and that
$\delta\lambda_k$ is convex inside this interval.

Similarly to the odd case, obviously we have $\delta\lambda_0 <0$ and
$\delta\lambda_{n-1/2}=0$. Unfortunately, the latter equality is not
of any direct use since the index is not an integer. Let us rather
show that
$$\delta\lambda_{n-1} = \frac{1}{2} \left( \sum_{j=1}^n \frac{(-1)^j
     \epsilon_j}{\sin^2 j \frac{\pi}{2n}} \right) \sin \frac{\pi}{2n}$$
is $<0$. It suffices to see that
$$a_n = \sum_{j=1}^n \frac{(-1)^j\epsilon_j}{\sin^2 j
   \frac{\pi}{2n}} = \sum_{j=1}^{n-1} \frac{(-1)^j}{\sin^2 j
   \frac{\pi}{2n}} + \frac{(-1)^n}{2} = \sum_{j=1}^n
\frac{(-1)^j}{\sin^2 j \frac{\pi}{2n}} + \frac{(-1)^{n+1}}{2}$$ is
$<0$. Note that in these sums if $j\leq n-1$ is odd the pair of terms
of indices $j$ and $j+1$ have a negative contribution. Hence, if $n$
is odd, the last but one given expression of $a_n$ shows that $a_n\leq
-1/2$; and, if $n$ is even, the same estimate follows from the last
expression.

It remains to show that
$$\delta^3\lambda_k = \frac{2 \sin \left((2k+3)(n+1)
     \frac{\theta}{4}\right) \sin \left( (2k+3)n \frac{\theta}{4}
   \right) + (-1)^k \sin (2k+3)\frac{\theta}{4} }{\sin
   (2k+3)\frac{\theta}{4}} $$
is positive if $1\leq k \leq n-2$. The denominator being $>0$, focus
on the numerator $\nu_k$. Setting $\alpha = (k+\frac{3}{2})
\frac{\pi}{2n}$, we get
$$\nu_k =2 \sin(n+1)\alpha \sin n\alpha + (-1)^k \sin\alpha;$$
splitting $(n+1)\alpha$ into $n\alpha + \alpha$ and partially
linearizing yields
$$\underbrace{\cos \alpha}_{>0} \left( \underbrace{(1+\cos
       2n\alpha)}_{\geq0}  + \tan \alpha \underbrace{\left( \sin
       2n\alpha + (-1)^k \right)}_{=0} \right) \geq0,$$
which proves the result.

\subsection{The symmetry group $G_{r/s}(N,k,\eta)$}
\label{sec:symmetries}

We will now analyze the symmetries of vertical eigenmodes in the full
phase space. Recall that most of the time a vertical eigenfrequency is
degenerate and has vertical multiplicity 4. We will let the amplitude
and the phase be respectively $A=1$ and $t_0=0$, and consider the most
symmetric generators of its eigenspace i.e.  $2$-frequency motions of
the form
$$x_j(t)=\left( \zeta^je^{i\omega_1t},\Re(\zeta^{\eta kj} e^{i\omega_k
     t}) \right)\in\C\times\R,\quad (j=0,\cdots, N-1), \eqno
S(N,k,\eta)$$ where
$\eta=\pm 1$ and, like in the former section,
$\zeta=e^{i\frac{2\pi}{N}}$ and $k\in\{1,...n\}$. When observed in a
frame rotating around the vertical axis with angular velocity
$\varpi$ such that
$$\frac{\omega_1-\varpi}{\omega_k} = \frac{r}{s} \in \Q,$$
the horizontal and vertical frequencies are set into resonance and the
motion becomes periodic of period $T=\frac{2\pi s}{\omega_k}$:
$$x_j(t)=\left(\zeta^je^{i \frac{r}{s}\omega_kt},
   \Re(\zeta^{\eta kj}e^{i\omega_kt})\right) \quad (j=0,...,N-1) \eqno
S_{r/s}(N,k,\eta).$$

\medskip The discrete symmetry group of such a motion is seeked as
before as a subgroup of
$$G_0=O(\R/T\Z)\times\Sigma(N)\times O(\R^3),$$
where $g=(\tau,\sigma,\rho)\in G_0$ acts naturally on the space of
$T$-periodic loops: $gx_j(t)=\rho x_{\sigma^{-1}(j)}(\tau^{-1}(t))$.

Let $G_{r/s}(N,k,\eta)$ be the stabilizer of $S_{r/s}(N,k,\eta)$. The
group structure of $G_{r/s}(N,k,\eta)$ does not depend on $r$ and it
depends on $s$ only through the fact that we are looking to the
solution during a time interval $s$ times longer than the minimal
period of the relative equilibrium in the inertial frame. We let
$(s,k)$ be the gcd of $s$ and $k$, and $s=(s,k)s',\; k=(s,k)k'$.

\begin{lemma}
   $G_{r/s}(N,k,\eta)$ is a semi-direct product of an Abelian group $H$
   of order $2Ns$ by $\Z/2\Z$. The group $H$ is an extension by
   $\Z/2\Z$ of a group $K$ which is itself an extension of $\Z/Ns'\Z$
   by $\Z/(k,s)\Z$.
\end{lemma}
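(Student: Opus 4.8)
The plan is to compute the stabilizer $G_{r/s}(N,k,\eta)$ explicitly by writing down the group elements $g=(\tau,\sigma,\rho)$ that fix the loop $S_{r/s}(N,k,\eta)$, and then to read off the claimed filtration $K \triangleleft H \triangleleft G$ from the algebraic relations among the parameters. First I would parametrize a candidate symmetry: since the horizontal component of $x_j(t)$ is $\zeta^j e^{i(r/s)\omega_k t}$ and the vertical component is $\Re(\zeta^{\eta kj}e^{i\omega_k t})$, a time-shift $\tau:\ t\mapsto t+\frac{T}{Ns}\cdot(\text{integer})$ must be matched simultaneously by a spatial permutation $\sigma:\ j\mapsto j+(\text{integer}) \pmod N$ acting on the planar $\zeta^j$ factor and by an element $\rho\in O(3)$ combining a horizontal rotation with a possible vertical reflection $z\mapsto \pm z$. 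Imposing $gx=x$ on both the $\C$-component and the $\R$-component separately yields a system of congruences in $\Z/Ns\Z$: the horizontal equation forces the time-shift and the index-shift to be compatible through the resonance ratio $r/s$, while the vertical equation forces a second compatibility through the factor $\eta k$. The interplay of these two congruences — and crucially the fact emphasized in Remark 2 that $m$ and $k$ are \emph{not} independent here — is what produces the arithmetic of $(s,k)$, $s'$, and $k'$.

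Having listed the elements, I would organize them into the three nested subgroups by the geometric nature of the $O(3)$-factor. The innermost group $K$ consists of the purely \emph{rotational} symmetries, those for which $\rho$ preserves the vertical orientation (no $z\mapsto -z$ reflection); these are the combined space-time rotations, and counting them reduces to solving the two simultaneous congruences in $\Z/Ns\Z$. I expect $K$ to be cyclic-by-cyclic: the horizontal resonance contributes a $\Z/Ns'\Z$ on the nose, and the vertical constraint, which only bites modulo $(s,k)$ because of the common factor, contributes the kernel $\Z/(k,s)\Z$ — giving the stated extension of $\Z/Ns'\Z$ by $\Z/(k,s)\Z$. The middle group $H$ is then obtained by adjoining a single element that reverses the vertical axis ($z\mapsto -z$, i.e. $\eta\mapsto -\eta$ up to a half-period time-shift), which squares into $K$ and hence realizes $H$ as an extension of $K$ by $\Z/2\Z$; the order count $|H|=2\,|K|=2\cdot Ns/?$ should be reconciled to give $|H|=2Ns$ after I verify $|K|=Ns$ from $Ns'\cdot(k,s)=Ns$ (using $s=(s,k)s'$). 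Finally the outermost $\Z/2\Z$ is a \emph{time-reversing} (or complex-conjugating) symmetry $t\mapsto -t$ combined with reflection, which does not commute with the rotations and therefore splits off only as a semidirect factor, giving $G=H\rtimes\Z/2\Z$.

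The main obstacle I anticipate is the careful bookkeeping of the two congruences and the verification that $H$ is genuinely Abelian. The Abelian claim is not automatic: it requires that the vertical-reflection generator commute with all the space-time rotations in $K$, which should hold because reflecting $z\mapsto -z$ acts trivially on the horizontal $\zeta^j$-labels and merely conjugates $e^{i\omega_k t}\mapsto e^{-i\omega_k t}$ in a way compatible with a time-shift — but this must be checked against the explicit action $gx_j(t)=\rho x_{\sigma^{-1}(j)}(\tau^{-1}(t))$, and the sign $\eta$ and the parity interactions make the computation delicate. Likewise, establishing that the final $\Z/2\Z$ is \emph{not} central — so that the product is truly semidirect and not direct — requires exhibiting a specific rotation in $H$ on which time-reversal acts nontrivially. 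The cleanest route is to fix generators once and for all, write each defining relation as an equation in $O(2)\times\Sigma(N)\times O(3)$, and solve; the divisibility structure $s=(s,k)s'$, $k=(s,k)k'$ then appears directly when one clears the common factor from the vertical congruence, and the orders $2Ns$, $Ns$, $Ns'$, $(k,s)$ drop out of the solution counts.
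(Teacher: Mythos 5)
Your proposal follows essentially the same route as the paper: solve $gx=x$ explicitly to obtain the two congruences in $\theta,\delta,\beta,\xi$, then filter the stabilizer by the time-reversal datum $\xi$ (giving the outer semidirect $\Z/2\Z$) and the vertical reflection $\beta$ (giving $H\supset K$), with the arithmetic of $s=(s,k)s'$ emerging exactly as you predict; the abelianness of $H$ that worries you is immediate in the paper because once $\xi=1$ the composition law is plain addition in $\R/s\Z\times\Z/N\Z\times\Z/2\Z$. The only slip is that in the paper's exact sequence $0\to\Z/Ns'\Z\to K\to\Z/(k,s)\Z\to 0$ the normal subgroup is $\Z/Ns'\Z$ (generated by $(k\eta/N,1)$), not $\Z/(k,s)\Z$ as your phrase ``contributes the kernel'' suggests, though your order count $|K|=Ns'\,(k,s)=Ns$ is unaffected.
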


\proof
\begin{suite}
\item \textit{Restriction to a subgroup $G_1$ of $G_0$.} Certainly
   elements of $G_{r/s}(N,k,\eta)$ stabilize the regular $N$-gon
   relative equilibrium (as the horizontal component of any
   infinitesimal vertical variations), as well as the cylinder of
   infinitesimal vertical variations. Hence $G_{r/s}(N,k,\eta)$ is
   contained in the subgroup $G_1$ consisting of elements
   $g=(\tau,\sigma,\rho)\in G_0$ satisfying the following conditions:
   \begin{Liste}
   \item The isometry $\rho\in O(\R^3)$ is of the form
     $\rho = (\rho_{hor},\rho_{ver}) : \C \times \R \mapsto \C
     \times
     \R$
     where
     $$\rho_{hor}(h) = e^{i 2\pi \alpha}h \quad \mbox{or} \quad
     e^{i2\pi \alpha}\bar h \quad \mbox{and} \quad \rho_{ver}(v) =
     e^{i\pi\beta}v$$ with $\alpha\in\R/\Z$ and $\beta\in \Z/2\Z$.
   \item If we set $\xi=\pm 1$ according to whether
     $\rho_{hor}(h)=e^{i 2\pi \alpha}h$ or $e^{i 2\pi \alpha}\bar
     h$,
     $\tau^{-1}(t)=\xi(t-\theta)$ with $\theta\in \R/T\Z$.
   \end{Liste}
   Hence an element $g\in G_1$ can be identified with a quintuple
   $$(\theta,\sigma,\alpha,\beta,\xi) \in \R/T\Z
   \times\Sigma(N) \times \R/\Z \times
   \Z/2\Z\times \F_2,$$ the multiplication law
   being given by $\tilde g=g'g$ with
   $$\tilde \theta=\theta'+\xi'
   \theta,\;
   \tilde\sigma=\sigma'\sigma,\;
   \tilde\alpha=\alpha'+\xi'\alpha,\;
   \tilde\beta=\beta'+\beta,\;
   \tilde \xi=\xi'\xi.$$
   {\it In the rest of this proof, let us set the time unit so that
     $\omega_k=2\pi$, i.e. $T=s$:} if $x\in S_{r/s}(N,k,\eta)$,
   $$gx_j = \left(
     \begin{array}[c]{l}
       \exp \left( 2\pi i \left[\alpha +
           \frac{1}{N}\xi \sigma^{-1}(j) +
           \frac{r}{s}(t-\theta) \right] \right)\\
       \cos \left(2\pi \left[
           \frac{\beta}{2} + \frac{\eta}{N}k\sigma^{-1}(j) +
           \xi(t-\theta) \right] \right)
     \end{array}\right).$$

\item \textit{Restriction to a subgroup $G_2$ of $G_1$.} Setting
   $\delta = \xi\sigma^{-1}(j)-j \in \Z/N\Z$, the symmetry equation
   $gx = x$ reduces to
   $$\left\{
     \begin{array}[c]{l}
       \alpha+\frac{\delta}{N}- \frac{r}{s} \theta\equiv 0\;
       \pmod{1}\\
       \xi \frac{\beta}{2} +k\eta \frac{\delta}{N} - \theta\equiv
       0\;
       \pmod{1}.
     \end{array} \right.$$
   The first
   equation shows that $\delta$ is independent of $j$,
   i.e. $\xi\sigma$ is a circular permutation. Hence, using the fact
   that $\xi\beta=\beta \pmod{1}$,
   $$
   \left\{
     \begin{array}[c]{l}
       \alpha \equiv \frac{r}{s} \theta-\frac{\delta}{N} \quad
       \pmod{1}\\
       \theta \equiv \frac{\beta}{2} + k\eta \frac{\delta}{N}
       \quad
       \pmod{1}
     \end{array}
   \right.
   $$
   These equations completely determine $\alpha\in \R/\Z$ as a
   function of $\delta$ and
   $\theta$ but $\theta \in \R/s\Z$ is only determined mod 1, as a
   function of
   $(\delta,\beta,\xi)\in\Z_N\times \Z/2\Z\times  \F_2$.
   Let
   $$G_2 = (\R/s\Z \times \Z/N\Z\times \Z/2\Z) \rtimes \F_2  =
   \{(\theta,\delta,\beta,\xi)\},$$
   where the semi-direct product is defined by the law $\tilde g = g'g$,
   with
   $$\tilde\theta = \theta'+\xi' \theta,\; \tilde\delta =
   \delta' +
   \xi'\delta, \; \tilde\beta =
   \beta'+\beta,\; \tilde\xi = \xi'\xi.$$
   Then $G_{r/s}(N,k,\eta)$ identifies with the subgroup of $G_2$
   defined by the equation $$\theta \equiv \frac{\beta}{2} + k\eta
   \frac{\delta}{N} \pmod{1}.$$

\item \textit{Group structure of $G_{r/s}(N,k,\eta)$.}

   \textit(i) The semi-direct product structure of $G_2$ goes down to
   $G_{r/s}(N,k,\eta)$. Indeed, the section $\xi\mapsto (0,0,0,\xi)$ of
   the projection $G_2\to\F_2$ takes its values in the subgroup
   $G_{r/s}(N,k,\eta)$. Hence $G_{r/s}(N,k,\eta)=H\rtimes \Z/2\Z$,
   where $H$ is the subgroup of the abelian group $\R/s\Z \times
   \Z/N\Z\times \Z/2\Z$ defined as the set of triples
   $(\theta,\delta,\beta)$ which satisfy
   $$\theta \equiv \frac{\beta}{2} + k\eta
   \frac{\delta}{N} \pmod{1}.$$
   \smallskip

   \textit(ii) Let $K$ be the kernel of the group homomorphism
   $(\theta,\delta,\beta)\mapsto \beta$ from $H$ to $\Z/2\Z$. It can be
   identified with the subgroup of $\R/s\Z \times \Z/N\Z$ defined as
   the set of pairs $(\theta,\delta)$ which satisfy
   $$\theta \equiv  k\eta
   \frac{\delta}{N} \pmod{1}.$$
   \smallskip

   \textit(iii)  One has the exact sequence
   $$0 \rightarrow \Z/Ns'\Z\rightarrow K \rightarrow
   \Z/(k,s)\Z\rightarrow 0,$$ where the first arrow sends 1 to
   $(k\eta/N,1)$ and the second one sends $(\theta,\delta)$ to the
   class of $\theta-k\eta\delta/N$.  This ends the proof of the lemma.
\end{suite}
\medskip

The precise structure of $G_{r/s}(N,k,\eta)$
depends on $s,N,k,\eta$. We study it in some
cases.

\begin{lemma}
   If one of the following conditions is satisfied, $H$ is isomorphic to
   $K\times
   \Z/2Z$:

   1) $s$ is odd

   2) $s$ and $N$ are even, $k$ is odd.
\end{lemma}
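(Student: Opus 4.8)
The plan is to observe that, since $H$, $K$ and $\Z/2\Z$ are all abelian, the asserted isomorphism $H\cong K\times\Z/2\Z$ is exactly the statement that the short exact sequence $0\to K\to H\xrightarrow{\beta}\Z/2\Z\to 0$ from parts (ii)--(iii) of the previous proof splits. For such a sequence, splitting is equivalent to the existence of a section of $\beta$, i.e. of an element $h=(\theta,\delta,\beta)\in H$ with $\beta=1$ and $2h=0$: such an $h$ has order exactly $2$ (its image in $\Z/2\Z$ is nonzero), hence $h\notin K$, so $\langle h\rangle\cap K=\{0\}$ and, comparing orders ($[H:K]=2$), $H=K\oplus\langle h\rangle\cong K\times\Z/2\Z$. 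Thus the whole lemma reduces to exhibiting, under each hypothesis, one order-$2$ element of $H$ lying over the generator of $\Z/2\Z$.

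Next I would make the order-$2$ condition explicit. Writing the law of $\R/s\Z\times\Z/N\Z\times\Z/2\Z$ additively, $2(\theta,\delta,1)=(2\theta,2\delta,0)$, so $h$ has order $2$ iff $2\theta\equiv 0\pmod s$ and $2\delta\equiv 0\pmod N$, that is $\theta\in\{0,s/2\}$ in $\R/s\Z$ and $\delta\in\{0,N/2\}$ in $\Z/N\Z$ (the value $N/2$ being available only when $N$ is even). Simultaneously $h$ must lie in $H$, i.e. satisfy $\theta\equiv\frac12+k\eta\,\frac{\delta}{N}\pmod 1$. The task is then purely arithmetic: among these finitely many candidate pairs $(\theta,\delta)$, find one for which the membership congruence mod $1$ holds.

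Under hypothesis (1), $s$ odd, I would take $\delta=0$ and $\theta=s/2$. The membership congruence becomes $s/2\equiv\frac12\pmod 1$, which holds precisely because $s$ is odd (so $s/2-\frac12=(s-1)/2\in\Z$); and $2\theta=s\equiv 0\pmod s$, so $(s/2,0,1)$ is the desired element. Under hypothesis (2), $N$ even and $k$ odd, I would instead take $\delta=N/2$ and $\theta=0$. Here $k\eta\,\frac{\delta}{N}=\frac{k\eta}{2}\equiv\frac12\pmod 1$ since $k\eta$ is an odd integer, so the membership congruence reads $0\equiv\frac12+\frac12=1\equiv 0\pmod 1$, which is satisfied; and $2\delta=N\equiv 0\pmod N$ while $2\theta=0$, so $(0,N/2,1)$ has order $2$. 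In both cases the chosen element has $\beta$-coordinate $1$, giving the splitting and hence the isomorphism.

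The only delicate point is the bookkeeping on the circle coordinate $\theta$, which is constrained modulo $s$ by the order condition but only modulo $1$ by membership in $H$; the parity hypotheses are exactly what reconcile the two moduli, the missing half-integer $\frac12$ being supplied either by the $2$-torsion element $s/2$ of $\R/s\Z$ (case 1) or by the term $k\eta\,\delta/N$ with $\delta=N/2$ and $k$ odd (case 2). I do not expect any genuine obstacle beyond this case check; note in passing that the argument of case 2 uses only ``$N$ even, $k$ odd,'' the assumption ``$s$ even'' merely guaranteeing it is not already covered by case 1.
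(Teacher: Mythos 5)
Your proof is correct and follows essentially the same route as the paper: both arguments split the sequence $0\to K\to H\to\Z/2\Z\to 0$ by exhibiting an order-two element of $H$ mapping to the generator, namely $(s/2,0,1)$ when $s$ is odd and an element with $\delta=N/2$ when $N$ is even and $k$ is odd (the paper takes $\theta=s/2$ there where you take $\theta=0$; both satisfy the membership congruence, so the difference is immaterial). Your closing observation that case 2 only needs ``$N$ even, $k$ odd'' is consistent with the paper, whose case-2 computation is carried out under the standing assumption that $s$ is even only because the odd-$s$ case is already settled.
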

\proof

1) If $s$ is odd, the mapping
$\beta\mapsto(\frac{s\beta}{2},0,\beta)$ defines
a section of the projection from $H$ to
$\Z/2\Z$. Hence the mapping
$(\theta,\delta,\beta)\mapsto
\bigl((\theta-\frac{s\beta}{2},\delta),\beta\bigr)$
is an isomorphism from $H$ to
$K\times\Z/2\Z$.

2) If  $s=2\sigma$ is even, a section must send 1
onto an element of order 2 in $H$ not belonging
to $K$,
i.e. an element of the form $(\theta,\delta,1)$
such that there exist $a,b\in\Z$ with
$$2\delta=bN,\quad \quad
1+2k\eta\frac{\delta}{N}=1+bk\eta=as=2a\sigma.$$
From the second equation, it follows that $b$ and
$k$ must be odd. The first one then implies that
$N$ is even. If all
these conditions are satisfied, $\beta\mapsto
(\beta\frac{s}{2},\beta\frac{N}{2},\beta)$
defines a section and hence the
mapping
$(\theta,\delta,\beta)\mapsto\bigl((\theta-\beta
\frac{s}{2},\delta-\beta\frac{N}{2}),\beta\bigr)$
is an
isomorphism from $H$ to
$K\times\Z/2\Z$.

\begin{lemma}
   If $(k,s')=1$, $K$ is isomorphic to
   $\Z/Ns'\Z\times \Z/(k,s)\Z$ (and hence to $\Z/Ns\Z$ if
   $(Ns',(k,s))=1$); if $(Nk',(k,s))=1$, $K$ is isomorphic to
   $\Z/Ns\Z$.
\end{lemma}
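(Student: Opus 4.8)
The plan is to make the given exact sequence explicit by presenting $K$ on two generators, compute its invariant factors once and for all, and then read off the three claims as elementary gcd identities. Recall throughout that $s=(k,s)s'$ and $k=(k,s)k'$, so $(s',k')=1$. I would choose the generators $u=(k\eta/N,1)\in K$ — the image of $1$ under the first map of the sequence, of order $Ns'$ — and $v=(1,0)\in K$, of order $s$. For any $(\theta,\delta)\in K$ the defining congruence gives $t:=\theta-k\eta\delta/N\in\Z$, and then $(\theta,\delta)=\delta\,u+t\,v$; hence $K=\langle u,v\rangle$.

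Next I would find the relations. One has $sv=0$ and, computing in $\R/s\Z\times\Z/N\Z$, $Nu=(k\eta,0)=k\eta\,v$. In the basis $(u,v)$ these relations are the lattice vectors $(0,s)$ and $(N,-k\eta)$, which span a sublattice of $\Z^2$ of index $|\det\begin{pmatrix}0&s\\ N&-k\eta\end{pmatrix}|=Ns$. Since $|K|=Ns$ as well, the kernel of the surjection $\Z^2\twoheadrightarrow K$ is exactly this sublattice, so the presentation is complete. The Smith normal form of this matrix has entrywise gcd $\gcd(N,s,k)$ and determinant $\pm Ns$, so writing $d:=\gcd(N,s,k)$,
$$K\cong\Z/d\Z\times\Z/(Ns/d)\Z.$$

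The three statements are now arithmetic. For the first, $(k,s')=1$ together with $(k,s)\mid k$ forces $\gcd(s',(k,s))=1$, whence $\gcd(Ns',(k,s))=\gcd(N,(k,s))=d$; since $Ns'\cdot(k,s)=Ns$, the group $\Z/Ns'\Z\times\Z/(k,s)\Z$ has invariant factors $(d,Ns/d)$, the same as $K$, so the two are isomorphic. The parenthetical case is then the Chinese remainder theorem applied when $(Ns',(k,s))=1$. For the last assertion, $(Nk',(k,s))=1$ forces $(N,(k,s))=1$, hence $d=\gcd(N,(k,s))=1$ and $K\cong\Z/Ns\Z$ is cyclic.

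The only point requiring care is justifying that $(0,s)$ and $(N,-k\eta)$ generate \emph{all} relations among $u,v$; I settle this by the index/order count above rather than by enumerating relations by hand. The sign $\eta=\pm1$ is immaterial, since it affects neither the entrywise gcd nor $|\det|$, and the remaining work is just keeping track of $s'$, $k'$ and $(k,s)$.
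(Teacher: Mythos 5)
Your proof is correct, and it takes a genuinely different route from the paper's. The paper treats the two halves of the statement by separate ad hoc constructions: for the first it exhibits an explicit section $1\mapsto (s',0)$ of the surjection $K\to\Z/(k,s)\Z$ (whence the splitting of the exact sequence, using $(s',(k,s))=1$, which follows from $(k,s')=1$), and for the second it writes down an explicit element $(\theta,\delta)=(k\eta\delta/N+l'k',\delta)$ and verifies by a divisibility chase that it has order $Ns$, hence generates $K$. You instead compute the isomorphism type of $K$ once and for all: the presentation on the generators $u=(k\eta/N,1)$, $v=(1,0)$ with relation matrix $\left(\begin{smallmatrix}0&s\\ N&-k\eta\end{smallmatrix}\right)$ is justified cleanly by the index count $[\Z^2:L_0]=Ns=|K|$, and the Smith normal form gives $K\cong\Z/d\Z\times\Z/(Ns/d)\Z$ with $d=\gcd(N,s,k)$; all three assertions then reduce to gcd identities. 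What your approach buys is uniformity and extra information: it determines $K$ unconditionally and makes visible that the stated hypotheses are stronger than needed (e.g.\ $K$ is cyclic as soon as $\gcd(N,(k,s))=1$, without requiring $(k',(k,s))=1$). What the paper's approach buys is explicit group elements (the section and the generator of order $Ns$), which can be convenient when one wants to manipulate the symmetries concretely later on. All the individual steps you rely on check out: $v\in K$ since $1\equiv 0\pmod 1$, the orders of $u$ and $v$ are as claimed, $|K|=Ns$ follows from the exact sequence already established in the paper, and $\gcd(Ns',(k,s))=\gcd(N,(k,s))$ under $(k,s')=1$ by the prime-by-prime argument you sketch.
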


1)  If $(k,s')=1$, the mapping which sends
$1\in\Z/(k,s)\Z$ to $(s',0)\in K$ is a section.

2) If $(Nk',(k,s))=1$, this implies $(N,(k,s))=1$, and hence the
existence of integers $l'$ and $\delta$ such that
$Nl'+\eta\delta(k,s)=1$. Let us set $l=l'k'$ and
$\theta=k\eta\delta/N+l$. We assert that the element
$(\theta,\delta)$ is of order $Ns$. Indeed, $p(\theta,\delta)=0\in
K$ if and only if there exists integers $a,b$ such that
$$pk\eta\delta/N+pl=k\eta b+pl=as,\quad p\delta=bN.$$
From its definition $\delta$ satisfies $(N,\delta)=1$. Then, the
second equation above implies the existence of an integer $p'$ such
that $p=p'N$, hence $b=p'\delta$ and the first equation above
becomes $p'\bigl((k,s)\eta \delta+Nl'\bigr)k'=as$, that is
$p'k'=as$. As $(Nk',(k,s))=1$ implies $(k',(k,s))=1$ and hence
$(k',s)=1$, there exists an integer $p''$ such that $p'=p''s$, and
hence that $p=p''Ns$.

\begin{corollary}
   When $s=1$, $G_{r/s}(N,k,\eta)$ is isomorphic to
   the direct product $D_N\times \Z/2\Z$, where
   $D_N$, of order
   $2N$, is the dihedral group. In particular,
   when $N$ is odd, it is isomorphic to $D_{2N}$.
\end{corollary}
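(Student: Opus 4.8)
The plan is to specialize the three structural lemmas just proved to the value $s=1$ and then to pin down the outer action of $\F_2$. First I would record that $s=1$ forces $(s,k)=1$ and $s'=1$, so that the exact sequence $0\to\Z/Ns'\Z\to K\to\Z/(k,s)\Z\to 0$ collapses to $K\cong\Z/N\Z$; concretely $K$ is generated by the element $(\theta,\delta)=(k\eta/N,1)$, which has order $N$. Since $s=1$ is odd, the first of the two intermediate lemmas gives $H\cong K\times\Z/2\Z$ via $(\theta,\delta,\beta)\mapsto\bigl((\theta-\tfrac{\beta}{2},\delta),\beta\bigr)$, whence $H\cong \Z/N\Z\times\Z/2\Z$.

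The heart of the argument is to transport the action of the outer factor $\F_2=\{\xi=\pm1\}$ through this isomorphism. From the multiplication law of $G_2$, the involution $\xi=-1$ sends $(\theta,\delta,\beta)$ to $(-\theta,-\delta,\beta)$. Writing a generic element of $H$ as $(\theta_0+\tfrac{\beta}{2},\delta,\beta)$ with $(\theta_0,\delta)\in K$, I would compute its image under $\xi$ and re-express it in the coordinates $K\times\Z/2\Z$: the $K$-component becomes $(-\theta_0-\beta,-\delta)$, and since $\beta\in\{0,1\}$ gives $-\beta\equiv 0\pmod 1$, this is just $(-\theta_0,-\delta)$, while the $\Z/2\Z$-component stays $\beta$. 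Thus $\xi$ acts on $K\times\Z/2\Z$ by inversion on $K$ and trivially on the $\beta$-factor.

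Consequently the semi-direct product splits as
$$G_{r/s}(N,k,\eta)=H\rtimes\Z/2\Z\cong (K\times\Z/2\Z)\rtimes\Z/2\Z\cong\bigl(K\rtimes\Z/2\Z\bigr)\times\Z/2\Z,$$
and $K\rtimes\Z/2\Z$, being $\Z/N\Z$ together with an order-two element acting by inversion, is by definition the dihedral group $D_N$ of order $2N$. This yields $G_{r/s}(N,k,\eta)\cong D_N\times\Z/2\Z$.

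For the final sentence I would invoke the elementary group-theoretic fact that $D_N\times\Z/2\Z\cong D_{2N}$ precisely when $N$ is odd (with the order-$2N$ convention for $D_N$): when $\gcd(N,2)=1$ the central involution combines with the rotation subgroup $\Z/N\Z$ into a cyclic rotation subgroup $\Z/2N\Z$, on which the reflection still acts by inversion (inversion being trivial on the $\Z/2\Z$ factor). The main delicacy to watch is the bookkeeping with the half-integer $\tfrac{\beta}{2}\in\R/\Z$ in the second step: one must verify that the two occurrences of $\tfrac{\beta}{2}$, one from the coordinate change and one from the action, cancel modulo $1$. That cancellation is exactly what makes the $\beta$-factor a genuine direct factor rather than getting entangled with the inversion, and it is the one point where the argument could silently go wrong.
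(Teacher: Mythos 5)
Your proof is correct and follows the route the paper intends: the corollary is stated there without proof as a direct consequence of the three preceding lemmas, which for $s=1$ give $K\cong\Z/N\Z$ and $H\cong K\times\Z/2\Z$. The one ingredient the paper leaves unstated --- that conjugation by $\xi=-1$ acts on $H$ by inversion on $K$ and trivially on the $\Z/2\Z$ factor (so that this factor is central and splits off, leaving $K\rtimes\F_2\cong D_N$) --- you verify correctly, including the cancellation of the two $\tfrac{\beta}{2}$ terms modulo $1$, and the final identification $D_N\times\Z/2\Z\cong D_{2N}$ for $N$ odd is standard.
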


\subsection{Invariant loops}

An $s$-periodic loop of configurations
$x(t)=\bigl(x_1(t),\cdots,x_N(t)\bigr)$ is invariant under the action
of $G_{r/s}(N,k,\eta)$ if and only if, for every
$(\theta,\delta,\beta,\xi)\in G_2$ representing an element of
$G_{r/s}(N,k,\eta)$, i.e. such that
$\theta-\frac{\beta}{2}-k\eta\frac{\delta}{N}=l\in\Z$, one has
$$\forall j\in \Z/{N\Z},\; x_j(t)=\rho
x_{\xi(j+\delta)}\bigl(\xi(t-\theta)\bigr), $$
where the action of $\rho$ on $\R^3=\C\times \R$ is defined by
$$\rho(h,z)=(e^{i2\pi\alpha} \bar h^\xi,e^{i\pi\beta} z) \quad
\mbox{with} \quad \alpha=\frac{r}{s}\theta-\frac{\delta}{N}\pmod{1},$$
where $\bar h^\xi =h$ if $\xi=+1$ and $\bar h^\xi = \bar h$ if $\xi =
-1$. \smallskip

Before looking at remarkable classes of invariant loops, let us make
some general comments:

1. Taking $(\theta=\frac{1}{2}+l,\delta=0,\beta=1,\xi=1)$ and setting
$x_j(t)=(h_j(t),z_j(t))$, we get
$$h_j(t)=e^{i2\pi\frac{r}{s}(\frac{1}{2}+l)}h_j(t-\frac{1}{2}+l),\quad
z_j(t)=-z_j(t-\frac{1}{2}+l).$$ In particular, if $s=2l+1$ and $r$ are
odd, an invariant loop possesses the Italian symmetry. This is the
case of the Hip-Hops and of chains with an odd number of lobes (see
below). \smallskip

2. Taking $(\theta=0,\delta=0,\beta=0,\xi=-1)$, we get
$$h_j(t)=\bar h_{-j}(-t),\quad z_j(t)=z_{-j}(-t).$$
In particular, when $t=0$, the configuration is always symmetric with
respect to the vertical plane containing the first coordinate axis.
\smallskip

3. Taking $(\theta=\frac{k\eta}{N},\delta=1,\beta=0,\xi=1)$, we get
$$h_j(t)= e^{i2\pi\frac{A}{N}}h_{j+1}(t-\frac{k\eta}{N}),\quad
z_j(t)=z_{j+1}(t-\frac{k\eta}{N}).$$
where $A=\frac{r}{s}k\eta-1$.
\medskip

More generally, the smallest non zero value of
$\theta=\frac{\beta}{2}+k\eta\frac{\delta}{N}+l =
\frac{N(\beta+2l)+2k\eta\delta}{2N}$
is
$\theta_{0}=\frac{(N,2k)}{2N}$ because $\beta+2l$
and $\delta$ are arbitrary integers.

\medskip One can distinguish two cases in the action of
$G_{r/s}(N,k,\eta)$ on the space of loops of $N$-body configurations.
In the first one, exemplified by the $P_{12}$ family for 3 bodies, no
a priori spatial symmetry of the configuration exists for all times,
and even $\alpha=0$ for any element of the group. In this case, the
group acts on the full space of similitude classes of $N$-body
configurations.  The condition is that for any $l\in\Z,\,
\delta\in\Z/\N\Z,\, \beta\in\Z,$
$\frac{r}{s}\bigl[\frac{\beta}{2}+k\eta\frac{\delta}{N}+l\bigr]
-\frac{\delta}{N}=0\;\pmod{1}.$ This is equivalent to $s=1,\, r=2r',\,
2r'k\eta-1=0\pmod{N}$ (which implies that $N$ is odd).  The case
$\frac{r}{s}=2r'=N-1, k=1,\eta=-1$ corresponds to the ``unchained
polygons'' with $N-1$ lobes.

In the second one, exemplified by the Hip-Hop family for 4 bodies,
such a symmetry does exist: there is a group element such that
$\theta=0,\, \delta=-1,\, \alpha=1/N$.  In this case, the group acts
on a subspace of the space of similitude classes of $N$-body
configurations.  The condition is that
$\frac{\beta}{2}-\frac{k\eta}{N}=0\pmod{1}$ which, because
$k\leq\frac{N}{2}$) implies $\beta=1$ and $k\eta=\frac{N}{2}$ (the
choice of $\eta=\pm 1$ is then immaterial because
$e^{k\frac{2\pi}{N}}=e^{-k\frac{2\pi}{N}}$, so we set $\eta=1$). The
$N=2N'$-body Hip-Hops studied in \cite{TV,BCPS} fall into this
category; the group element is
$(\theta=0,\delta=-1,\xi=1,\alpha=\frac{1}{N})$, which sends one body
onto the next one by a rotation of $\frac{2\pi}{N}$ followed by a
change of sign in the vertical component. We now study some types of
symmetric loops in the two cases.

\paragraph{The choreographic symmetry} (bodies gather by pairs, at
least, following the same curve in space): this corresponds to a group
element $g$ with $\xi=1, \delta \neq 0, \alpha=0,\beta=0$. The
equations defining the symmetry group become
$$\left\{
   \begin{array}[c]{l}
     \frac{\delta}{N}- \frac{r}{s} \theta\equiv 0\; \pmod{1}\\
     k\eta \frac{\delta}{N} - \theta\equiv 0\;
     \pmod{1}.
   \end{array} \right.$$

Choreographies can be \emph{simple} or \emph{multiple}. They are
\emph{simple} if all the bodies lie on the same curve, i.e. if in
addition $\sigma$ has a unique cycle:
$$(\delta,N)=1.$$

\begin{lemma}
   $S_{r/s}(N,k,\eta)$ is a simple choreography if and only if
   $$s-k\eta r=0\; \pmod{N}.$$
\end{lemma}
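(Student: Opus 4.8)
The plan is to read off ``simple choreography'' directly from the group-theoretic description of the preceding paragraph. By definition $S_{r/s}(N,k,\eta)$ is a simple choreography exactly when its stabilizer contains a choreographic element, i.e. one with $\xi=1,\ \alpha=0,\ \beta=0$ and index-shift $\delta$, whose associated permutation $\sigma^{-1}(j)=j+\delta$ is a single $N$-cycle; by the criterion already recorded this means $(\delta,N)=1$. So the whole question reduces to deciding for which $\delta$ a choreographic element actually exists, and then to asking whether some admissible $\delta$ can be taken coprime to $N$.

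First I would eliminate $\theta$ from the two congruences
$$\frac{\delta}{N}-\frac{r}{s}\theta\equiv 0\pmod 1,\qquad k\eta\frac{\delta}{N}-\theta\equiv 0\pmod 1$$
defining the choreographic elements. The second gives $\theta=k\eta\frac{\delta}{N}+l$ for some $l\in\Z$ (recall $\theta$ lives in $\R/s\Z$, whence the integer ambiguity $l$); substituting into the first and clearing the common denominator $sN$ turns the pair into the single linear congruence
$$\delta\,(s-k\eta r)\equiv r\,l\,N\pmod{sN}.$$
For a fixed $\delta$ this is solvable in the integer $l$ if and only if $\gcd(rN,sN)$ divides $\delta(s-k\eta r)$; since $r$ and $s$ are mutually prime, $\gcd(rN,sN)=N$, so the condition is simply $N\mid \delta(s-k\eta r)$.

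Writing $d=\gcd(N,\,s-k\eta r)$, the admissible shifts $\delta$ are then exactly the multiples of $N/d$ in $\Z/N\Z$. The last step is the arithmetic observation that such a multiple can be chosen coprime to $N$ if and only if $N/d=1$: if $d=N$ then $\delta=1$ works (and $(1,N)=1$ trivially yields a single cycle), whereas if $d<N$ every admissible $\delta$ is divisible by the nontrivial divisor $N/d$ of $N$ and hence shares a factor with $N$. Therefore a simple-choreography element exists precisely when $d=N$, i.e. when $s-k\eta r\equiv 0\pmod N$, which is the claimed criterion.

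I expect the only real care to be bookkeeping: keeping track of the integer freedom $l$ coming from $\theta\in\R/s\Z$ against the mod-$1$ congruences, and correctly using $\gcd(r,s)=1$ to evaluate $\gcd(rN,sN)=N$. The conceptual content is minimal once these are handled; geometrically the same computation says that body $j$ retraces body $0$'s space curve (up to a time shift) exactly when $N\mid j(s-k\eta r)$, so that all $N$ bodies share one curve iff this holds for $j=1$, again recovering $s-k\eta r\equiv 0\pmod N$.
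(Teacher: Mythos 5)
Your proof is correct and follows essentially the same route as the paper's: both eliminate $\theta$ from the two congruences defining a choreographic group element and reduce, using $(r,s)=1$ and $(\delta,N)=1$, to the arithmetic condition $N \mid s-k\eta r$. The only difference is cosmetic: you package the ``if'' direction in the solvability criterion for linear congruences (characterizing the admissible shifts as the multiples of $N/\gcd(N,\,s-k\eta r)$), whereas the paper exhibits the explicit group element with $\delta=1$ via a B\'ezout relation $ls+mr=a$.
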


\begin{proof} We rewrite more explicitely the conditions for being a
   choreography: there exists integers $l,m$ such that
   $$\left\{
     \begin{array}[c]{l}
       s\delta - Nr\theta=lsN\\
       k\eta \delta - N\theta=mN.
     \end{array} \right.$$
   This is equivalent to
   $$\left\{
     \begin{array}[c]{l}
       (s-k\eta r)\delta=(ls-mr)N\\
       k\eta \delta - N\theta=mN.
     \end{array} \right.$$
   Due to the condition of simple choreography $(\delta,N)=1$, this
   is equivalent to the existence of an integer $a$ such that
   $$\left\{
     \begin{array}[c]{l}
       ls-mr=a\delta\\
       s-k\eta r=aN\\
       k\eta \delta - N\theta=mN.
     \end{array} \right.$$
   This proves the ``only if'' part of the lemma.

   For the ``if'' part, let us suppose that there exists
   an integer $a$ such that $s-k\eta r=aN$. As
   $(r,s)=1$, we can choose $l,m$ such that
   $ls+mr=a$.
   We then define
   $$g =
   \left(\theta=\frac{k\eta}{N}-m,\delta=1,\xi=1,\alpha=0,\beta=0\right)\in
   G_{r/s}(N,k,\eta),$$
   which completes the proof.
\end{proof}

\subparagraph{Remark} The condition of simple choreography could of
course have been obtained directly from the formulae for invariant
loops or by writing that for all $j$, one has $\bigl(\zeta^{\eta
  kj}e^{i\omega_kt}\bigr)^{\frac{r}{s}} =
\zeta^je^{i\frac{r}{s}\omega_kt}$, which expresses that all the bodies
belong to the curve described by the first.

\begin{corollary}\label{cor:dense}
   For any $N,k,\eta$, the set of values of $r/s$ for which
   $S_{r/s}(N,k,\eta)$ is a simple choreography is dense in $\R$.
\end{corollary}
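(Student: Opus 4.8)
The plan is to reduce the statement, via the preceding lemma, to an elementary density question for a linear congruence, and then to settle it by an explicit construction. That lemma characterizes the simple choreographies among the $S_{r/s}(N,k,\eta)$ as exactly those for which the coprime pair $(r,s)$ satisfies $s-k\eta r\equiv 0\pmod N$. Writing $c=k\eta$ (a fixed integer once $N,k,\eta$ are given), it therefore suffices to prove that the set
$$\left\{\frac rs\in\Q\ :\ \gcd(r,s)=1,\ s\equiv c\,r\pmod N\right\}$$
is dense in $\R$. Since density means meeting every nonempty open interval, and every such interval contains a nonzero real, it is enough to approximate an arbitrary $\rho\in\R\setminus\{0\}$ by such fractions.

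First I would rewrite the target ratio. Asking for $r/s\approx\rho$ is the same as asking for $s/r\approx 1/\rho$, and on the solution set one has $s=cr+mN$ for some integer $m$, so that $s/r=c+(m/r)N$. Setting $\beta=\dfrac{1-c\rho}{N\rho}$, the requirement becomes $m/r\approx\beta$, because $c+\beta N=1/\rho$. The construction is then: take $r$ to be a large prime $p>N$ and let $m$ be a nearest integer to $\beta p$, so that $\lvert m/p-\beta\rvert\le 1/(2p)$; put $s=cp+mN$, which satisfies $s\equiv cp\equiv cr\pmod N$ by construction, and $s/r=c+(m/p)N\to c+\beta N=1/\rho$ as $p\to\infty$, hence $r/s\to\rho$.

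The one point requiring care is coprimality, and this is where the choice of a \emph{prime} $p$ pays off. Since any common divisor $d$ of $p$ and $cp+mN$ divides $(cp+mN)-c\,p=mN$, it divides $\gcd(p,mN)$; as $p$ is prime with $p>N$, this forces $d=1$ unless $p\mid m$. To exclude that I would, if necessary, replace $m$ by $m\pm1$ (one of the two is not a multiple of $p$): this only weakens the estimate to $\lvert m/p-\beta\rvert\le 3/(2p)$, so the limit $r/s\to\rho$ persists, while now $p\nmid m$ and $p\nmid N$ give $\gcd(p,cp+mN)=1$. Letting $p\to\infty$ through primes then yields coprime pairs $(r,s)$ with $s\equiv cr\pmod N$ and $r/s\to\rho$, which proves density. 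The main (and essentially only) obstacle is precisely this coprimality constraint: without it the solution set is a full-rank sublattice of $\Z^2$ and density of the ratios is immediate, so the real content is reconciling density with primitivity, achieved here by the prime denominator together with the harmless $\pm1$ adjustment.
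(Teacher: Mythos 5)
Your proof is correct, and it shares the paper's starting point: by the preceding lemma everything reduces to showing that the coprime pairs $(r,s)$ with $s\equiv k\eta r\pmod N$ yield a dense set of ratios, and like the paper you pass to the reciprocal $s/r=k\eta+mN/r$. Where you genuinely diverge is in how the coprimality constraint is discharged. The paper converts it, via a B\'ezout manipulation (equivalently $\gcd(r,s)=\gcd(r,s-k\eta r)=\gcd(r,aN)$), into the condition $(r,aN)=1$, then specializes $aN$ to a power $N^i$ and uses the density of irreducible fractions $r/N^i$; so it varies the ``denominator'' $aN$ through powers of $N$. You instead fix the numerator $r$ to be a large prime $p>N$, choose $m$ by nearest-integer approximation of $\beta p$ with a $\pm1$ adjustment to avoid $p\mid m$, and get $\gcd(p,\,k\eta p+mN)=1$ for free from primality. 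The two constructions are dual: yours trades the B\'ezout step for the infinitude of primes, and is arguably more self-contained since the coprimality check is a one-line divisibility argument. Both are complete; the only cosmetic point in yours is that $s=k\eta p+mN$ may come out negative, which is harmless because $(r,s)$ and $(-r,-s)$ give the same ratio and satisfy the same congruence.
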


\begin{proof} The integers $N,k,\eta$ being given, $r,s,a$ must be
   such that
   $$(r,s)=1\quad\hbox{and}\quad s-k\eta r=aN.$$
   The primality condition is equivalent to the existence of integers
   $l,m$ such that $ls+mr=1$ , that is $(m+lk\eta)r+l(aN)=1$. As
   $(m+lk\eta,l)$ can be an arbitrary pair of integers, this is
   equivalent to $(r,aN)=1$. But the density of the $r/s$ is
   equivalent to the density of $s/r=k\eta+aN/r$ and hence to the
   density of the $aN/r$ or equivalently of the $r/aN$, with the
   unique condition that $(r,aN)=1$.  As the irreducible fractions of
   the form $r/N^i$ are dense, the corollary follows.
\end{proof}






\paragraph{Eights and maximal chains symmetries}

The two choreographic cases with extreme values of $k$ are
$G_2(N=2n+1,n,-1)$ (which we call the \emph{Eight} symmetry) and
$G_{N-1}(N,1,-1)$ (resp. the \emph{maximal chain} symmetry). In the
first case, the group element $(\theta=\frac{1}{2N},\delta=1, \,
\beta=1,\, \xi=1)$ leads to
$$x_j(t)=x_{j+1}(t-\frac{1}{2N}),\; z_j(t)=-z_{j+1}(t-\frac{1}{2N});$$
in the second case, the group element
$(\theta=\frac{1}{2N},\delta=n, \, \beta=1,\,
\xi=1)$ leads to
$$x_j(t)=x_{j+n}(t-\frac{1}{2N}),\; z_j(t)=-z_{j+n}(t-\frac{1}{2N}).$$
Solutions invariant under such a symmetry are called \emph{unchained
  polygons} because the horizontal rotation of the regular $N$-gon is
unfolded in the vertical direction, into a choreographic chain with 2
and $N-1$ lobes respectively. Examples are shown on
figures~\ref{fig:vve}.2 and~\ref{fig:vve}.1.

\paragraph{The Hip-Hop symmetry}

Writing $k=(N,k)\tilde k,\, N=(N,k)\tilde N$, we notice that
$$\zeta^{\eta kj_1}=\zeta^{\eta kj_2}\quad\hbox{if
   and only if}\quad  j_2-j_1\quad\hbox{is a
   multiple of }\;
\tilde N.$$
Hence the bodies are divided into $\tilde N$ regular $(N,k)$-gons which
remain horizontal at each instant. Independantly
of the chosen rotating frame, the
motion is invariant under a group element $g$ of the form
$$g=\left(\theta=0,\, \delta=\tilde N,\,
  \alpha=-\frac{1}{(N,k)},\, \beta=0\right)\in G_{1,1}(N,k,\eta).$$ If
we make the simplest possible choice $r/s=1$, we get a \emph {Hip-Hop
  motion}. The name is a reference to the original Hip-Hop solution
where the two diagonals of a square stay horizontal while undergoing
vertical oscillations (see figure~\ref{fig:vve}.6). Note that this
symmetry can be combined with a choreographic symmetry.

\medskip A few examples of Lyapunov cylinders are shown on
figure~\ref{fig:vve}. They are respectively tangent at the relative
equilibrium to the following Lyapunov families (the terminology will
be explained in section~\ref{sec:global}):

\begin{tabular}[t]{ll}
  1. $S_4(5,1,-1)$ &Maximal unchained polygon ($\ell=4$)\\
  2. $S_2(5,2,-1)$ &Five-body Eight\\
  3. $S_3(4,1,-1)$ &Unchained polygon (Gerver family)\\
  4. $S_{3/2}(4,2,1)$ &Terracini-Venturelli choreography (Hip-Hop
  family)\\ 
  5. $S_1(6,1,-1)$ &Six-body Hip-Hop\\
  6. $S_2(6,2,1)$ &Yet another six-body Hip-Hop.
\end{tabular}

\begin{figure}[h]
    \centering
    \includegraphics[width=8cm]{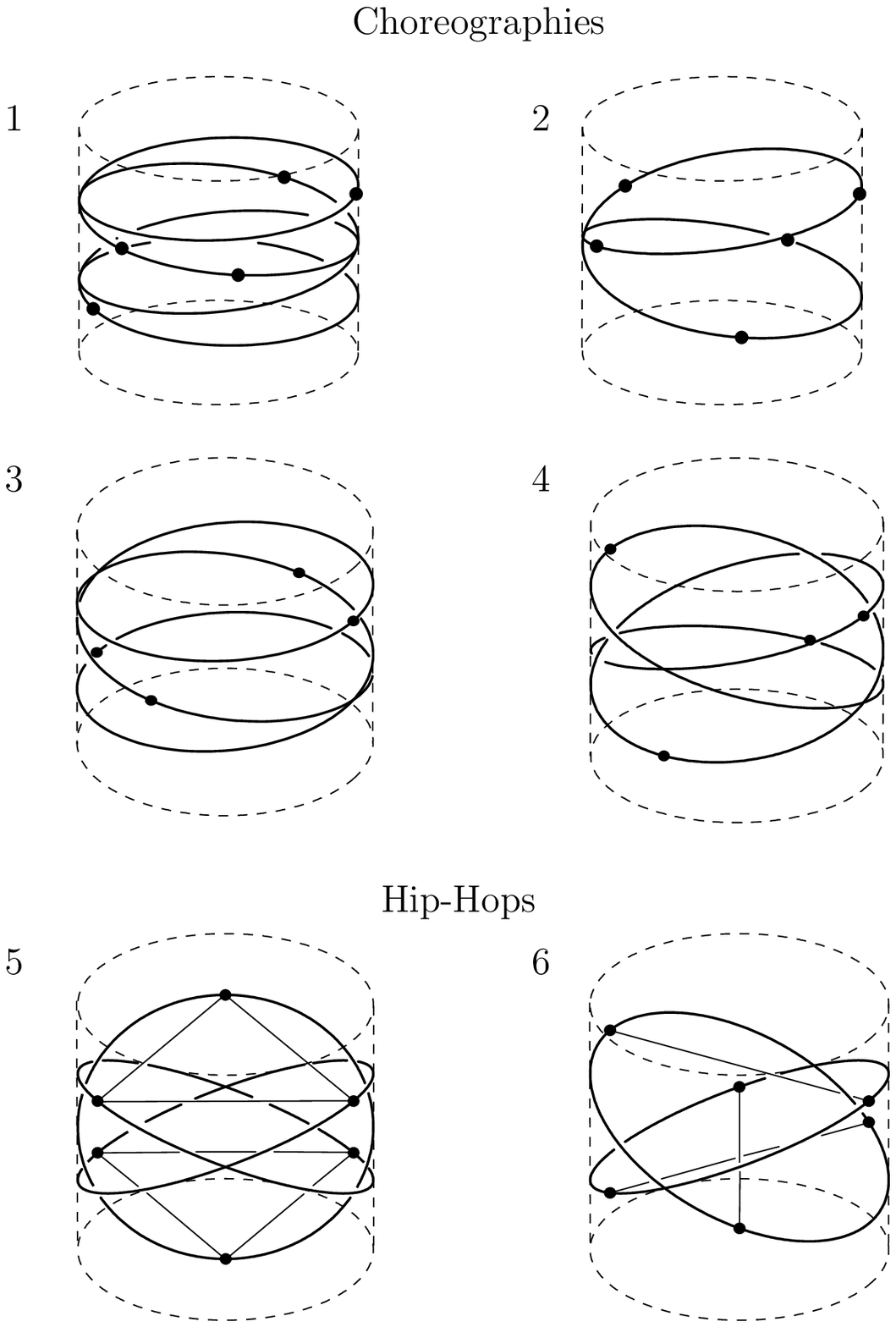}
    \caption{Some examples of first order solutions having special
      symmetries in a resonant rotating frame.}
    \label{fig:vve}
\end{figure}



\subsection{Isomorphic symmetries}
\label{sec:isomorphisms}

In order to better understand minimizers of the Lagrangian action
among loops which in the rotating frame are invariant under the
$G_{r/s}(N,k,\eta)$ action described above, we study the cases when
the actions of two groups $G_{r/s}(N,k,\eta)$ and
$G_{r'/s'}(N,k',\eta')$ coincide up to a relabelling $j\mapsto
j'=\Sigma(j)$ of the bodies. A first condition is that the periods
coincide, i.e. $s'=s$.

{\it We will stick for simplicity to the case $s=s'=1$.}
\smallskip

Let as usual $\zeta=e^{2\pi i/N}$; the horizontal relative equilibrium
whose $j'$th body's motion is $h'_{j'}(t)=\zeta^{j'}e^{2\pi ir't}$ is
$G_{r'}(N,k',\eta')$-symmetric. Hence a necessary condition for the
two actions to coincide up to the relabelling $\Sigma$ is that the horizontal
relative equilibrium whose $j$th body's motion is
$$h_{j}(t)=h'_{\Sigma(j)}(t)=\zeta^{\Sigma(j)}e^{2\pi ir't}$$
be $G_{r}(N,k,\eta)$-symmetric. This means that for any
$(\delta,\beta,\xi)\in G_{r}(N,k,\eta)$,
$$e^{2\pi i\alpha}\zeta^{\xi\Sigma(\xi(j+\delta))}e^{2\pi ir'\xi^2(t-\theta)}
=\zeta^{\Sigma(j)}e^{2\pi ir't}.$$
Using the identities
$\theta=\frac{\beta}{2}+k\eta\frac{\delta}{N}\pmod{1}$ and
$\alpha=\frac{r}{s}\theta-\frac{\delta}{N}\pmod{1}$, this becomes
$$(r-r')\bigl[\frac{\beta}{2}+k\eta\frac{\delta}{N}\bigr]-\frac{\delta}{N}
+\frac{1}{N}\bigl[\xi\Sigma(\xi(j+\delta))-\Sigma(j)\bigr]=0\pmod{1}.$$
$\delta=0,\beta=1,\xi=1$, we get a first necessary condition 
$$\mbox{1)} \quad r-r'=2p\in2\Z.$$
Taking then $\delta=0,\beta=0$, we get $\xi\Sigma(\xi
j)=\Sigma(j)\pmod{N}$. In particular, if $\xi=-1$ and $j=0$ we get
$\Sigma(0)=0\pmod{N}$.  Finally, taking $\delta=1,\beta=0$, we get
$\Sigma(j+1)-\Sigma(j)=1-2pk\eta\pmod{N}$, hence
$\Sigma(j)=(1-2pk\eta)j\pmod{N}$.  Interchanging the roles of the two
groups we see that necessarily
$\Sigma^{-1}(j')=(1+2pk'\eta')j'\pmod{N}$. This yields a second
necessary condition
$$\mbox{2)}\quad (1-2pk\eta) (1+2pk\eta') = 1 \pmod{N}.$$
It turns out that these necessary conditions are essentially
sufficient.  Indeed, it is enough to replace the condition 2), which
can be written $-2p[-k\eta+k'\eta'-2pk\eta k'\eta']=0\pmod{N}$ by the
slightly stronger condition 2') below:

\begin{proposition}
   A necessary and sufficient condition for the existence of a
   permutation $\mathfrak{S}$ of $\Z/N\Z$ such that for any
   $G_{r'}(N,k',\eta')$-invariant loop $x'(t)$ whose $j'$th body motion
   is $(h'_{j'}(t),z'_{j'}(t))$, the loop $x(t)$ whose $j$th body
   motion is
   $(h_j(t),z_j(t)):=\bigl(h'_{\mathfrak{S}(j)}(t),z'_{\mathfrak{S}(j)}(t)\bigr)$
   be $G_{r}(N,k,\eta)$-invariant is that the following conditions be
   satisfied:

   1) $r-r'=2p\in2\Z$,

   2') $-k\eta+k'\eta'-2pk\eta k'\eta'=0\pmod{N}$.

   The permutation $\mathfrak{S}$ and its inverse are then
   respectively $j\mapsto j'=(1-2pk\eta)j$ and $j'\mapsto
   j=(1+2pk'\eta')j'$ in $\Z/N\Z$.
\end{proposition}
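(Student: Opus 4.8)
The plan is to turn the desired property into a bijection between the elements of $G_{r}(N,k,\eta)$ and those of $G_{r'}(N,k',\eta')$ compatible with the relabelling, and to read off conditions 1) and 2') as exactly the constraints that make this bijection consistent on the vertical part. Since $s=s'=1$, each element of $G_{r}(N,k,\eta)$ is determined by a triple $(\delta,\beta,\xi)\in\Z/N\Z\times\Z/2\Z\times\F_2$, the time shift being forced to $\theta\equiv\frac\beta2+k\eta\frac\delta N\pmod1$ and the horizontal phase to $\alpha\equiv r\theta-\frac\delta N\pmod1$, and similarly for the primed group. The discussion preceding the statement already forces $\mathfrak S$ to be the linear map $j\mapsto cj$ with $c=1-2pk\eta$ and $r-r'=2p$; accordingly I would test the correspondence $(\delta,\beta,\xi)\mapsto(\delta',\beta',\xi')=(c\delta,\beta,\xi)$.

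For sufficiency, assume 1) and 2'). Multiplying 2') by $2p$ and expanding $(1-2pk\eta)(1+2pk'\eta')$ gives $cc'\equiv1\pmod N$ with $c'=1+2pk'\eta'$, so $\mathfrak S\colon j\mapsto cj$ is a bijection with inverse $j'\mapsto c'j'$. Now write the $G_{r'}(N,k',\eta')$-invariance of $x'$ under $g'=(c\delta,\beta,\xi)$ in the form recalled in the subsection on invariant loops, i.e. the horizontal component carrying the phase $e^{2\pi i\alpha'}$ together with the conjugation dictated by $\xi$, and the vertical component
$$z'_{j'}(t)=e^{i\pi\beta}\,z'_{\xi(j'+\delta')}\bigl(\xi(t-\theta')\bigr),$$
both evaluated at the shifted index $\xi(j'+\delta')$ and time $\xi(t-\theta')$. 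Substituting $j'=\mathfrak S(j)=cj$ and $\delta'=c\delta$, one checks the four matchings that make $x_j=(h'_{cj},z'_{cj})$ invariant under $g=(\delta,\beta,\xi)$: the index identity $\mathfrak S(\xi(j+\delta))=\xi(\mathfrak S(j)+\delta')$ is the linearity $c\xi(j+\delta)=\xi(cj+c\delta)$; the vertical sign and reflection data $\beta,\xi$ are preserved; the time shifts agree, $\theta'=\theta$, precisely because $k'\eta'\frac{c\delta}N\equiv k\eta\frac\delta N\pmod1$ for all $\delta$, which is 2'); and the horizontal phases agree, $\alpha'=\alpha$, because $\alpha-\alpha'=2p\bigl(\theta-k\eta\frac\delta N\bigr)=p\beta+2pl\in\Z$ once $r-r'=2p$, so that 1) makes this matching automatic. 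Hence the relabelled loop is $G_{r}(N,k,\eta)$-invariant.

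For necessity, the preamble already yields 1) and the linear form $\mathfrak S(j)=cj$, $c=1-2pk\eta$, by demanding that the relabelled horizontal relative equilibrium be $G_{r}(N,k,\eta)$-symmetric; this, however, only sees the weaker horizontal condition 2). To obtain 2') I would feed in a loop whose vertical part is genuinely time dependent, namely $x'=S_{r'}(N,k',\eta')$, which is $G_{r'}(N,k',\eta')$-invariant by definition of the stabilizer and has vertical component $z'_{j'}(t)=\Re\bigl(\zeta^{\eta'k'j'}e^{2\pi it}\bigr)$. Its relabelling has vertical component $z_j(t)=z'_{cj}(t)=\Re\bigl(\zeta^{\eta'k'cj}e^{2\pi it}\bigr)$, and imposing $G_{r}(N,k,\eta)$-invariance under the elements $(\delta,\beta=0,\xi=1)$, for which $\theta=k\eta\frac\delta N$, requires $z_j(t)=z_{j+\delta}(t-k\eta\frac\delta N)$; comparing the complex amplitudes in $\delta$ forces $\eta'k'c\equiv k\eta\pmod N$, which is exactly 2'). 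The permutation and its inverse are then the asserted maps $j\mapsto(1-2pk\eta)j$ and $j'\mapsto(1+2pk'\eta')j'$.

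The step I expect to be the main obstacle is precisely this upgrade from 2) to 2'): one must notice that the horizontal relative equilibrium is blind to the time shift $\theta$ (its vertical part vanishes), and therefore exhibit an invariant loop whose vertical oscillation forces the time arguments --- not merely the horizontal phases --- to coincide under the relabelling. The remaining verifications are routine bookkeeping of the phases $\alpha$, $\theta$, $\beta$ across the two groups, where the only real care needed is to keep track of which congruences are taken $\pmod1$ and which $\pmod N$, and to confirm that the horizontal-phase matching is automatic under 1).
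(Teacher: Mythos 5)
Your proof is correct and follows essentially the same route as the paper: the correspondence $(\delta,\beta,\xi)\mapsto(\delta',\beta',\xi')=\bigl((1-2pk\eta)\delta,\beta,\xi\bigr)$ between group elements, the verification that the time shifts and horizontal phases match exactly under 1) and 2'), and the derivation of the necessary form of $\mathfrak{S}$ from the relabelled horizontal relative equilibrium. Your necessity argument for 2') via the explicit test loop $S_{r'}(N,k',\eta')$ with genuinely oscillating vertical component is a welcome elaboration of what the paper compresses into ``straightforward computations show that this reduces to condition 2')''.
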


\begin{proof} We already know that the asserted form of the
  permutation $\mathfrak{S}$ is necessary. What remains is a direct
  computation: in order to show the invariance of the loop $x(t)$
  under the element of $G_{r}(N,k,\eta)$ represented by
  $(\delta,\beta,\xi)$, one must find an element of
  $G_{r'}(N,k',\eta')$ repesented by $(\delta',\beta',\xi')$ such that
  $$e^{2\pi
    i(\alpha-\alpha')}
  \overline{h'_{(1-2pk\eta)\xi(j+\delta)}(\xi(t-\theta))}^{\xi}
  =\overline{h'_{(1-2pk\eta)\xi'j+\xi'\delta'}(\xi'(t-\theta'))}^{\xi'}$$
  and
  $$e^{\pi i(\beta-\beta')}z'_{(1-2pk\eta)\xi(j+\delta)}(\xi(t-\theta))
  =z'_{(1-2pk\eta)\xi'j+\xi'\delta'}(\xi'(t-\theta')).$$

  Straightforward computations show that this reduces to condition
  2'), and that the unique solution is
  $$\delta'=(1-2pk\eta)\delta,\; \beta'=\beta,\; \xi'=\xi,$$
\end{proof}
\medskip

\paragraph{Examples} We note $G\equiv G'$ for two groups which satisfy
the conditions of the proposition. We have

$G_{N-1}(N,1,-1) \equiv G_{-(N-1)}(N,1,1)$, with $\mathfrak{S}(j) = -j
\pmod{N}$.

$G_2(N,n,-1) \equiv G_{-2}(N,n,1)$, with $\mathfrak{S}(j) = -j
\pmod{N=2n+1}$.

$G_{N-1}(N,1,-1)\equiv G_2(N,n,-1)$ for $N=2n+1$. This is the
isomorphism between the symmetries of the maximal chains (see
\ref{sec:maximalChain}) and the symmetries of the Eight with an odd
number of bodies. Here $k\eta=-1,\; k\eta'=-n,\; p=n-1,\;
\mathfrak{S}(j)=(2n-1)j=-2j\pmod{N}.$

$G_{3}(4,1,-1)\equiv G_1(4,1,1)$. This is the isomorphism between the
symmetries of the Gerver solution (see~\ref{sec:C}) and the symmetries of
the relative equilibrium of the square. Here $k\eta=-1,\; k\eta'=1,\;
p=1,\; \mathfrak{S}(j)=3j\pmod{4}=-j\pmod{4}.$

$G_{3}(5,2,1)\equiv G_1(5,1,1)$. This is the isomorphism between the
symmetries of the 3 lobes chain for 5 bodies (see ....) and the
symmetries of the relative equilibrium of the pentagon. Here
$k\eta=2,\; k\eta'=1,\; p=1,\;
\mathfrak{S}(j)=-3j\pmod{5}=2j\pmod{5}.$ 

\paragraph{Remark} The difference between conditions 2) and 2') is
most easily understood on the example of the groups $G_1(4,1,1)$ and
$G_1(4,2,\pm 1)$. The horizontal relative equilibrium of the square
possesses both symmetries but the two actions are not isomorphic:
indeed, relative equilibria in an inclined plane are symmetric under
$G_1(4,1,1)$ but not under the symmetry group of the Hip-Hop
$G_1(4,2,\pm 1)$. This comes from the fact that condition 2) is
trivially satisfied because $p=0$ while condition 2') is not.

\section{Local continuation}
\label{sec:localCont}

After reduction by translations and rotations, the
$(6N-10)$-dimensional reduced phase space is the sum of a
$(4N-6)$-dimensional ``horizontal'' subspace and a
$(2N-4)$-dimensional ``vertical'' subspace, both invariant under the
linearized flow. The eigenvalues of this flow are that of the
variational equation, the only difference being that the generalized
eigenspace corresponding to $\pm i\omega_1$ has lost four dimensions,
two horizontally (corresponding to fixing the value of the angular
momentum and to rotations around it), and two vertically
(corresponding to rotations around a horizontal axis). \medskip

We have already studied the vertical part of the spectrum. A thourough
study of the horizontal part appears in~\cite{Mo}. See
figures~\ref{fig:spectrum} and~\ref{fig:horizontalSpectrum}. Note that
purely imaginary horizontal eigenvalues exist only for $N\geq 6$
bodies.

\begin{figure}[h]
    \centering
    \includegraphics[width=12cm]{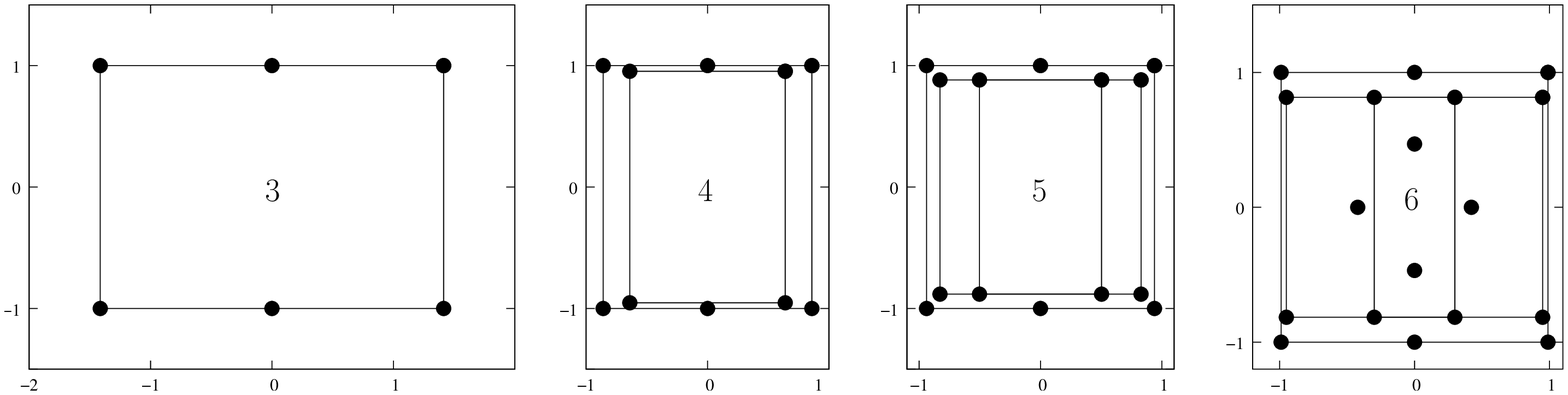}
    \caption{Horizontal spectrum for three to six bodies}
    \label{fig:spectrum}
\end{figure}

\begin{figure}[h]
    \centering
    \includegraphics[width=8cm]{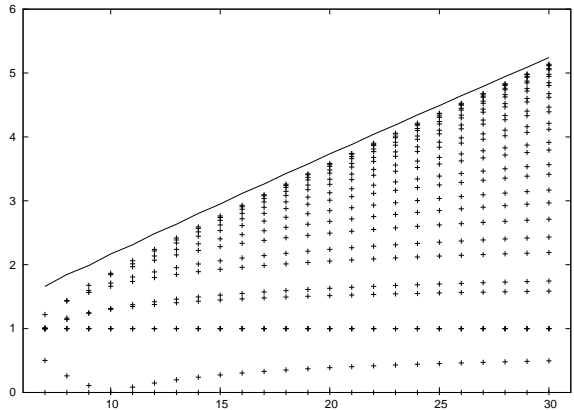}
    \caption{Imaginary parts of purely imaginary horizontal
      eigenvalues of the regular $N$-gon, as functions of $N \in \{7, 
      ..., 30\}$ are upper bounded by $\omega_n$ (solid line).}
    \label{fig:horizontalSpectrum}
\end{figure}

Recall that for each vertical frequency $\omega_k$ the vertical
eigenspace, most of the time, has dimension~4. The only exception is
when $N=2n$ is even and $k=n$; if moreover no other vertical or
horizontal frequency is an integer multiple of $\omega_n$, Lyapunov's
classical theorem implies the local existence and uniqueness of the
Lyapunov family. In the other cases, one can try to get existence from
the following theorem of Weinstein, here in the form given by Moser,
of whom we have kept the notations.

\begin{theorem}[{\cite[p.~743]{Mos}}]
  Let $H$ be a Hamiltonian on $R^{2n}$ with an equilibrium point at
  the origin, and $C$ be the linearization of the Hamiltonian vector
  field at the origin. Assume that $\R^{2n} = E \oplus F$, where $E$
  and $F$ are invariant subspaces of $C$ such that all solutions of
  $C$ in $E$ have the same period $T>0$ while no nontrivial solution
  in $F$ has this period. Moreover, assume that the Hessian $D^2H(0)$
  restricted to $E$ is positive definite. Then, for sufficiently small
  $\varepsilon$, on each energy surface $H(z) = H(0) + \varepsilon^2$
  the number of periodic orbits of $H$ is at least $\frac{1}{2}
  \mbox{dim} \, E$.
\end{theorem}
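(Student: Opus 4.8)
The plan is to reduce the search for periodic orbits near the equilibrium to a finite-dimensional, $S^1$-equivariant variational problem supported on $E$, and then to count its critical points by a topological (Lusternik--Schnirelmann) argument; the positive-definiteness hypothesis is precisely what makes both the reduction and the count go through.

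First I would fix the loop space by rescaling time. Since the orbits sought have period close to $T$, I introduce a frequency parameter $\lambda$ near $1$ and look for $T$-periodic zeros of the map $\mathcal F(z,\lambda)=\dot z-\lambda\,J\nabla H(z)$ on a Sobolev space of $T$-periodic loops; the equilibrium $z\equiv 0$ is a zero for every $\lambda$, and the nonconstant zeros are the desired orbits (their true period being $T/\lambda$). Linearizing at $z=0$ gives the operator $\frac{d}{dt}-\lambda C$. At $\lambda=1$ its kernel consists of the $T$-periodic solutions of $\dot z=Cz$, which by hypothesis are exactly those lying in $E$, while $\frac{d}{dt}-C$ is invertible on the $F$-component because $e^{TC}-I$ is invertible there. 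Thus the kernel is finite-dimensional and canonically isomorphic to $E$.

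This sets up a Lyapunov--Schmidt reduction: I split the loop space along the kernel $E$ and its $L^2$-orthogonal complement, solve the complementary (infinite-dimensional) equation for the $F$-part and the non-resonant Fourier modes by the implicit function theorem, and am left with a finite-dimensional bifurcation equation $\Phi(\xi,\lambda)=0$ with $\xi\in E$. Because the system is Hamiltonian and $C|_E$ is semisimple with all frequencies equal to $2\pi/T$, this reduced equation is the gradient of a reduced function $h_\lambda$ on $E$ --- essentially the average of $H$ along the linear $T$-periodic flow --- which is invariant under the $S^1$-action defined by that flow.

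The positive-definiteness of $D^2H(0)|_E$ now enters decisively. It forces the quadratic part of $h_\lambda$ to be positive definite, so for small $\varepsilon$ the level set $\{\xi\in E:\ h_\lambda(\xi)=\varepsilon^2\}$ is a smooth sphere $S^{2m-1}$, where $2m=\dim E$, and it guarantees the compactness needed for minimax values to be attained. The induced $S^1$-action on this sphere has quotient (a weighted) complex projective space $\C P^{m-1}$, and finding periodic orbits on the energy surface $H=H(0)+\varepsilon^2$ amounts to finding critical $S^1$-orbits of $h_\lambda$ on it, i.e. critical points of a smooth function on $\C P^{m-1}$. By the Lusternik--Schnirelmann estimate their number is at least $\mathrm{cat}(\C P^{m-1})=m=\frac12\dim E$, and distinct critical orbits yield geometrically distinct periodic solutions. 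The hard part will be organizing the reduction so that the unknown frequency $\lambda$ and the prescribed energy level are correctly traded against each other, and verifying that $\Phi$ genuinely inherits both the gradient structure and the $S^1$-equivariance, without which the topological count would not apply.
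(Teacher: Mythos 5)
This theorem is not proved in the paper at all: it is quoted verbatim from Moser \cite{Mos}, so the only meaningful comparison is with Moser's own argument. Your outline follows that argument in its essentials: a Lyapunov--Schmidt reduction in a space of $T$-periodic loops, using that $e^{TC}-\mathrm{Id}$ is invertible on $F$ and on the non-resonant Fourier modes so that the kernel of the linearized operator is a copy of $E$; a reduced, $S^1$-invariant, gradient-type bifurcation equation on $E$; and a Lusternik--Schnirelmann count on the (possibly weighted, since the action induced by $e^{tC}|_E$ need not be free) projective quotient $\C P^{m-1}$, whose category is $m=\frac{1}{2}\dim E$. These are exactly the ingredients of the Weinstein--Moser proof.

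There is, however, a genuine muddle at the step where the count actually happens. You propose to find ``critical $S^1$-orbits of $h_\lambda$'' on the level set $\{h_\lambda=\varepsilon^2\}$; but a function restricted to one of its own regular level sets is constant, so as written this step carries no information. The correct structure requires \emph{two} $S^1$-invariant functions on $E$: the reduced energy $h$, whose positive-definite quadratic part (inherited from $D^2H(0)|_E$) makes $\{h=\varepsilon^2\}$ a sphere $S^{2m-1}$, and the reduced symplectic action $a$ (equivalently the frequency $\lambda$, which appears as the Lagrange multiplier in $\nabla a=\lambda\,\nabla h$). One counts critical points of $a$ restricted to the sphere $\{h=\varepsilon^2\}$; since both functions are invariant, $a$ descends to $\C P^{m-1}$ and the category bound $\mathrm{cat}(\C P^{m-1})=m$ applies. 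This is precisely the ``trading of $\lambda$ against the prescribed energy level'' that you yourself flag as the hard part and leave unresolved; until it is carried out, the topological count is not attached to anything. The remaining points you gloss over --- that distinct critical orbits give geometrically distinct periodic solutions, and that the category estimate survives the finite isotropy responsible for the weights --- are true and are handled in \cite{Mos}.
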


We will see in the next section that the theorem applies to the
vertical frequencies provided that no unexpected resonance occurs. The
cases $N=3$ and $N=2n$ with Hip-Hop symmetry are respectively studied
in \cite{CF2} and \cite{BCPS} (see also \cite{MS} for the case of the
regular $n$-gon with a central mass at its center).

\subsection{Partial convexity of the energy}

In order to apply the Weinstein-Moser theorem to some vertical
frequency $\omega_\ell$, one must check that the energy level sets,
restricted to the space of $2\pi/\omega_\ell$-periodic solutions of
the linearized vector field, are compact in the neighborhood of the
relative equilibrium.

Let $\mathcal{V}_\ell$ be the vertical eigenspace of the frequency
$\omega_\ell$ ($1 \leq \ell \leq N/2$), and $\mathcal{H}_1$ be the
plane tangent to the homographic motions. (Recall that the total
eigenspace of $\omega_1$ contains $\mathcal{V}_1 \oplus
\mathcal{H}_1$.) If no other frequency, horizontal or vertical, is an
integer multiple of (possibly equal to) $\omega_\ell$, in order to
apply the Weinstein-Moser theorem it suffices to prove that the energy
is convex on $\mathcal{V}_1 \oplus \mathcal{H}_1$ for $\ell=1$ and on
$\mathcal{V}_\ell$ for $2 \leq \ell \leq N/2$.  Below we will prove
the fact that the quadratic part $H$ is positive definite on the whole
vector space
$$\mathcal{F} = \mathcal{H}_1 \bigoplus \oplus_{1\leq \ell \leq N/2}
\mathcal{V}_\ell.$$ 

The first cases, studied in the following section, all satisfy the
required non resonance condition. Furthermore, numerical experiment
suggests that the purely imaginary horizontal eigenvalues, in general,
cannot resonate with the vertical eigenvalue $i \omega_n$, at least,
for they are smaller in module (see
figure~\ref{fig:horizontalSpectrum}, where the horizontal frequencies
have been computed using the factorization of the stability polynomial
which is described in~\cite{Mo}).\footnote{Note however that the
  largest horizontal frequency seems asymptotic to $\omega_n$, when
  $N$ tends to infinity.} When this is true, the proposition below and
the Weinstein-Moser theorem show in a weak sense the local existence
of Lyapunov families associated with $\omega_n$, in particular.

\begin{proposition}
  After reduction by rotations, the restriction of the quadratic part
  of the energy to $\mathcal{F}$ is definite positive.
\end{proposition}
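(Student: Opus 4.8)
The plan is to show that the quadratic part $H$ of the reduced energy, restricted to $\mathcal{F} = \mathcal{H}_1 \oplus \bigoplus_{1 \leq \ell \leq N/2} \mathcal{V}_\ell$, is positive definite by exploiting the decoupling of the horizontal and vertical pieces and by reusing the eigenvalue computations already carried out. Since $\mathcal{F}$ splits as $\mathcal{H}_1$ (horizontal, tangent to the homographic motions) in direct sum with the vertical eigenspaces $\mathcal{V}_\ell$, and since along a horizontal relative equilibrium the variational equation splits into (HVE) and (VVE) with no coupling between horizontal and vertical variations, I expect $H$ to be block-diagonal with respect to this splitting. Thus it suffices to prove positivity on each block separately.

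For the vertical blocks, I would lean directly on the computation in Section~\ref{sec:known}. There we found that for an eigenvector $Z_k$ of $\mathcal{W}(C)$ with eigenvalue $-\omega_k^2$, one has
$$d^2\tilde U(C)(Z_k,Z_k) = (\omega_1^2 - \omega_k^2) I^{1/2} |Z_k|^2.$$
The quadratic part of the reduced energy on $\mathcal{V}_\ell$ combines a (positive) kinetic term with this potential term, and the relevant sign is governed by the frequency comparison. By Lemma~\ref{evOrder}, the half-sequence $(\lambda_k)_{1 \leq k \leq N/2}$ is negative and strictly \emph{decreasing}, so the $\omega_k$ are strictly \emph{increasing}; in particular $\omega_k > \omega_1$ for every $\ell = k \geq 2$. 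This is exactly the inequality that makes the energy positive definite on each $\mathcal{V}_\ell$ with $\ell \geq 2$, once the kinetic normalization is put in. The genuinely delicate block is $\ell = 1$, because $\mathcal{V}_1$ by itself sits at the degenerate frequency $\omega_1$ and the vertical potential contribution there vanishes; this is precisely why the statement bundles $\mathcal{V}_1$ together with $\mathcal{H}_1$ into the single block $\mathcal{H}_1 \oplus \mathcal{V}_1$, the total $\omega_1$-eigenspace after removing the four trivial directions.

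So the main obstacle is the $\mathcal{H}_1 \oplus \mathcal{V}_1$ block. Here I would argue that $\mathcal{H}_1$ is the plane tangent to the homographic (rotating/scaling) deformations of the central configuration, and that after reduction by rotations and fixing of the angular momentum the four trivial directions (the two horizontal ones from the momentum level and the rotation about its axis, and the two vertical ones from rotations about a horizontal axis) have already been quotiented away. What remains of the $\omega_1$-eigenspace is the genuinely oscillatory part, on which the second variation of the action over one period was shown in Section~\ref{sec:known} to have a definite sign tied to $(\omega_1^2 - \omega_k^2)$; for the surviving homographic directions the relevant quantity is the convexity of the scaled potential $\tilde U$ along radial/rotational variations, which is positive for a central configuration. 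I would make this precise by writing $H$ explicitly on $\mathcal{H}_1 \oplus \mathcal{V}_1$ in the basis adapted to the eigenmodes and checking that the reduction has removed exactly the kernel, leaving a positive definite form. The hardest technical point will be verifying that the reduction by rotations and the angular-momentum constraint strip away precisely the degenerate directions and nothing more, so that no zero eigenvalue survives in $\mathcal{H}_1 \oplus \mathcal{V}_1$; once that is confirmed, positivity on the remaining $\mathcal{V}_\ell$ ($\ell \geq 2$) follows immediately from Lemma~\ref{evOrder} and the frequency comparison, completing the proof.
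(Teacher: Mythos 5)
Your decomposition of $\mathcal{F}$ into the block $\mathcal{H}_1\oplus\mathcal{V}_1$ plus the blocks $\mathcal{V}_\ell$, $\ell\geq 2$, is the same as the paper's, but both key steps go wrong. For $\mathcal{V}_\ell$ with $\ell\geq2$ you invoke $d^2\tilde U(C)(Z_k,Z_k)=(\omega_1^2-\omega_k^2)I^{1/2}|Z_k|^2$ together with lemma~\ref{evOrder}, claiming that $\omega_k>\omega_1$ is ``exactly the inequality'' giving positivity. This conflates the Hessian of the \emph{action} with that of the \emph{energy}: the quantity $(\omega_1^2-\omega_k^2)$ is \emph{negative} for $k\geq2$ (that is the Pacella--Moeckel obstruction to the relative equilibrium being an action minimizer) and it governs $d^2\mathcal{A}$, where the variation oscillates at the orbital frequency $\omega_1$. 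The energy $H=K-U$ needs no frequency comparison at all: on vertical variations $d^2U(C)(Z,Z)=Z\cdot\mathcal{W}Z=-\omega_\ell^2|Z|^2<0$ because every eigenvalue of $\mathcal{W}$ is negative, so the potential part $-U$ of $H$ contributes positively while the kinetic part is positive in the velocities. This is what the paper's lemma~\ref{lm:El} computes explicitly (together with the check that $\mathcal{V}_\ell$ lies in a level set of the vertical angular momentum, so the restriction survives the reduction); lemma~\ref{evOrder} plays no role. Note also that your criterion, applied consistently, would make the potential contribution vanish only on $\mathcal{V}_1$, whereas in the energy it does not vanish there either: the reason $\mathcal{V}_1$ is delicate is not a sign degeneracy of $d^2\tilde U$ but the fact that $\mathcal{V}_1\oplus\mathcal{H}_1$ is tangent to the rotation orbit and to the homographic family, where the nonlinear reduction acts.

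On the block $\mathcal{H}_1\oplus\mathcal{V}_1$ you defer precisely the step that constitutes the proof, and the heuristic offered in its place fails: along radial and rotational variations $\tilde U=I^{1/2}U$ is \emph{constant} (it is invariant under scaling and rotation), so its ``convexity'' cannot be the source of positivity, and before reduction $-U\sim -N\omega_1^2/\rho$ is concave in the scaling parameter $\rho$. Positivity appears only after quotienting by $SO(3)$ \emph{and} imposing $\|L\|=\mathrm{const}$, which ties $\rho\sigma$ to the tilt variables and produces an effective potential with a nondegenerate minimum at the circular solution. The paper carries this out in lemma~\ref{lm:E1} by parametrizing the homographic manifold $\mathcal{E}_1$ by $(\rho,\sigma,R,S)$, expanding the angular momentum to second order, and exhibiting the reduced Hamiltonian as $\frac{N\omega_1^2}{2}\left(-1+a^2+b^2+4c^2+d^2\right)+\cdots$; some such computation is unavoidable. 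Finally, the block-diagonality of the quadratic part is better obtained, as in the paper, from the fact that $\mathcal{H}_1\oplus\mathcal{V}_1$ and the $\mathcal{V}_\ell$ are invariant eigenspaces of the linearized reduced flow for distinct eigenvalues, hence pairwise symplectically orthogonal; the horizontal/vertical splitting of the variational equation alone does not separate $\mathcal{V}_\ell$ from $\mathcal{V}_{\ell'}$.
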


We first prove two lemmas. The frequency $\omega_1$ plays a special
role, for its eigenspace always includes a horizontal plane, namely
the plane tangent to the homographic motions.

\begin{lemma}\label{lm:E1}
  After reduction by rotations, the restriction of the energy to
  $\mathcal{V}_1 \oplus \mathcal{H}_1$ is definite positive.
\end{lemma}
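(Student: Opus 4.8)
The plan is to exploit the splitting of the quadratic energy into a horizontal and a vertical part, to dispatch $\mathcal{V}_1$ by a one-line eigenvalue computation, and to recognize $\mathcal{H}_1$ as the radial mode of the planar Kepler problem obtained by restricting to homographic motions. Along the horizontal relative equilibrium the quadratic part of the energy decouples into a horizontal and a vertical summand, exactly as $(VE)$ splits into $(HVE)$ and $(VVE)$ via the Pythagoras argument of Section~1.1. Since $\mathcal{H}_1$ is horizontal while $\mathcal{V}_1$ is vertical, and the two are orthogonal for the mass metric, it is enough to prove positive definiteness on each summand separately. Throughout I use the energy $E=\frac12|\dot x|^2-U$ dual to the paper's Lagrangian $\frac12|\dot x|^2+U$, so that the sign conventions match.

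For the vertical summand I would argue that reduction by the rotation about the (vertical) angular-momentum axis acts trivially on vertical variations, and that a vertical displacement contributes nothing to the vertical angular momentum; hence on $\mathcal{V}_1$ the reduced energy coincides with $\frac12|\dot z|^2-U$. For $z\in\mathcal{V}_1$ one has $d^2U(C)(z,z)=\langle z,\mathcal{W}z\rangle=-\omega_1^2|z|^2$, since $z$ is an eigenvector of $\mathcal{W}$ with eigenvalue $-\omega_1^2$, so the quadratic energy is $\frac12|\dot z|^2+\frac12\omega_1^2|z|^2$, manifestly positive definite. Equivalently, this is the $k=1$ case of the identity $d^2\tilde U(C)(Z_k,Z_k)=(\omega_1^2-\omega_k^2)I^{1/2}|Z_k|^2$ of Section~\ref{sec:known}, which makes the $\tilde U$-contribution vanish on $\mathcal{V}_1$.

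For the horizontal summand I would use the classical fact that the homographic motions $x(t)=z(t)\,C$ with $z\in\C$ form a planar Kepler problem $\ddot z=-\omega_1^2 z/|z|^3$, whose circular solution is the relative equilibrium and whose tangent plane at it is $\mathcal{H}_1=\C\cdot C$. After reduction by rotation one fixes the angular momentum $\ell$ and passes to the radial variable $\rho=|z|$, obtaining the reduced energy $\frac12 I\dot\rho^2+V_{\mathrm{eff}}(\rho)$ with $V_{\mathrm{eff}}(\rho)=\ell^2/(2I\rho^2)-U(C)/\rho$, where $I=I(C)$. A short computation gives $V_{\mathrm{eff}}'(\rho_0)=0$ and $V_{\mathrm{eff}}''(\rho_0)=U(C)/\rho_0^3=I\omega_1^2>0$ at the circular radius (using $U(C)=I\omega_1^2$, i.e. $I^{-3/2}\tilde U=\omega_1^2$); hence the reduced energy is positive definite on $\mathcal{H}_1$, with radial frequency $\omega_1$ as expected.

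The main obstacle is the horizontal part, and specifically the role of the reduction. On the full four-dimensional $\mathcal{H}_1$-phase space the Jacobi energy in the rotating frame is only semidefinite: the rotation direction $\J C$ is neutral, and in fact $d^2\tilde U(C)$ vanishes on the entire group-orbit plane $\mathcal{H}_1$, so no positivity can come from the potential alone. Strict positivity appears only after eliminating the angular momentum, which produces the centrifugal term in $V_{\mathrm{eff}}$ and converts the degenerate critical point of the rotating-frame effective potential into a nondegenerate minimum of the reduced one. The vertical part and the orthogonal decoupling are then routine, and together they yield positive definiteness of the energy on $\mathcal{V}_1\oplus\mathcal{H}_1$.
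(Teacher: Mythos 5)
Your conclusions are correct, but your route is genuinely different from the paper's, and one step needs to be made precise. The paper does not decouple: it observes that $\mathcal{V}_1\oplus\mathcal{H}_1$ is tangent to the $8$-dimensional manifold $\mathcal{E}_1=\{x=\rho RC,\ \dot x=\sigma S\J C\}$, computes $H|_{\mathcal{E}_1}=N\omega_1^2(\sigma^2/2-1/\rho)$, reduces by quotienting by the diagonal $SO(3)$ and fixing $\|L\|$, and reads off the positive definite form $\frac{N\omega_1^2}{2}(a^2+b^2+4c^2+d^2)$ in the coordinates $(a,b,c)$ of $R^{-1}S$ and $d=\sigma-1$. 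Your decomposition --- $\mathcal{V}_1$ via the spectrum of $\mathcal{W}$, $\mathcal{H}_1$ via the Kepler amended potential --- is more conceptual and isolates the two sources of positivity; your horizontal computation ($V_{\mathrm{eff}}''(\rho_0)=U(C)=I\omega_1^2>0$) is exactly the centrifugal mechanism that the paper's constraint $\|L\|=\rho\sigma\omega_1 N(1+\cdots)$ encodes, and it is correct. The delicate step is the vertical one. The reduction is not only by the rotation about the vertical axis: it also fixes the horizontal components of $L$ (equivalently, quotients by all of $SO(3)$ and fixes $\|L\|$), and these two extra constraints act nontrivially on vertical variations, because $L_{\mathrm{hor}}$ is linear in $(z,\dot z)$ along a horizontal relative equilibrium. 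In particular your form $\frac12|\dot z|^2+\frac12\omega_1^2|z|^2$ does not descend to the quotient by rigid tilts: it is strictly positive on the tilt direction $z_j(t)=\Re(\alpha\zeta^je^{i\omega_1 t})$, which is an $SO(3)$-orbit direction and therefore must lie in the kernel of the true reduced Hessian. The correct statement is that the reduced $\mathcal{V}_1$ is the $2$-dimensional subspace $\{dL_{\mathrm{hor}}(z,\dot z)=0\}$ of the $4$-dimensional vertical $\omega_1$-eigenspace; on that subspace the Lagrange-multiplier correction $-\omega_1\,d^2\|L\|$ vanishes and the reduced Hessian is indeed the restriction of your form, so positive definiteness follows because the restriction of a positive definite form to a subspace is positive definite. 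With that identification spelled out your argument is complete, and it becomes essentially the mechanism the paper uses for $\mathcal{V}_\ell$, $\ell\geq 2$, in lemma~\ref{lm:El}, where the issue disappears because those eigenspaces satisfy $L_{\mathrm{hor}}=0$ automatically.
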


This lemma allows to apply the Weinstein-Moser
theorem~\cite[p.~743]{Mos} to the frequency $\omega_1$ provided that
the reduced linearized equation have no solution of period
$2\pi/\omega_1$ outside $\mathcal{V}_1 \oplus \mathcal{H}_1$. It is a
straightforward generalization of \cite[lemma~2.1]{CF2}.

\begin{proof}
  For the same reason as in \cite[lemma~2.1]{CF2}, the lift of
  $\mathcal{V}_1 \oplus \mathcal{H}_1$ to the non-reduced phase space
  is tangent to the submanifold
  $$\mathcal{E}_1 = \{(x,\dot x)\in (\R^3)^N \times (\R^3)^N, \; \exists
  \rho, \sigma>0 \; \exists R,S\in SO_3, \; x = \rho R C, \; \dot x =
  \sigma S \J C \}$$ where, as before, $C$ is the central
  configuration $(1,\zeta, ..., \zeta^{N-1})$ and $\J$ is the
  horizontal-rotation operator.  It suffices to prove that the
  restriction of $H$ to $\mathcal{E}_1$ is definite. For the sake of
  completeness, we shortly explain the computation.

  Before reduction, $(\rho,\sigma,R,S)\in(\R_+)^2\times SO(3)\times
  SO(3)$ are (generalized) coordinates on $\mathcal{E}_1$ and the
  restriction of $H$ is
  $$H|_{\mathcal{E}_1}=\frac{\sigma^2}{2}\sum_{0\leq j 
    \leq N-1}||\omega_1 \J \zeta^j||^2 -\frac{1}{\rho} \sum_{0\leq j<k
    \leq N-1} \frac{1}{||\zeta^j- \zeta^k||}.$$ 
  As in~\ref{sec:known}, the mass dot product by $C$ of Newton's
  equation shows that 
  $$N \omega_1^2 = \sum_{0\leq j<k\leq N-1} \frac{1}{||\zeta^j-
    \zeta^k||}.$$ Hence
  $$H|_{\mathcal{E}_1} = N \omega_1^2
  \left(\frac{\sigma^2}{2}-\frac{1}{\rho} \right).$$
  
  We compute the reduced system by first quotienting by the full group
  $SO(3)$ and then fixing the length of the angular momentum $L$.
  This amounts to replacing $(\rho,\sigma,R,S)$ by
  $(\rho,\sigma,R^{-1}S)$ and imposing the relation
  $$\rho\sigma ||\sum_{i=0}^{N-1} (\zeta^j,0) \wedge R^{-1}S(i \omega_1
  \zeta^j,0)||:=||L||.$$ Any element of a neighborhood of the Identity
  in $SO(3)$ can be uniquely written as $\exp A$, where $A$ is an
  antisymmetric $3\times 3$ matrix. In particular,
  $$R^{-1}S=\exp\begin{pmatrix}
    0&-c&b\\
    c&0&-a\\
    -b&a&0\end{pmatrix}$$
  $$=Id+\begin{pmatrix}
    0&-c&b\\
    c&0&-a\\
    -b&a&0\end{pmatrix}+\frac{1}{2}\begin{pmatrix}
    -(b^2+c^2)&ab&ac\\
    ab&-(c^2+a^2)&bc\\
    ac&bc&-(a^2+b^2)\end{pmatrix}+\cdots,$$ where the dots represent
  terms of order higher than or equal to 3 in $a,b,c$. Locally, the
  angular momentum equals
  $$L = \rho\sigma\omega_1 \left(
    \begin{pmatrix}
      0\\ 0\\ N
    \end{pmatrix} + \dfrac{N}{2} 
    \begin{pmatrix}
      b \\-a\\ 0
    \end{pmatrix} - \dfrac{N}{4}
    \begin{pmatrix}
      ac\\ bc\\ a^2 + b^2 + 2c^2
    \end{pmatrix} + \cdots \right),$$
  and its norm
  $$\|L \| = \rho\sigma\omega_1 N \left[ 1 - \frac{1}{2} \left(
      a^2 + b^2 + 4 c^2 \right) + \cdots \right].$$ 
  In the coordinates $(a,b,c,d=\sigma-1)$ on the space $\mathcal{E}_1$
  reduced by the rotations, the reduced Hamiltonian is
  $$\dfrac{N\omega_1^2}{2} \left( -1 + a^2 + b^2 + 4c^2 + d^2
  \right) + \cdots,$$ which proves the lemma.
\end{proof}

We will now study the other eigenspaces $\mathcal{V}_\ell$. Usually
$\mathcal{V}_\ell$ is $4$-dimensional, generated by the sum of the
Lyapunov ``cylinders'' $S(N,\ell,\pm 1)$. As we will see in examples,
the first interesting case is that of $N=5$, $\ell=2$. The case where
$N=2n$ is even and $\ell=n$ is special:
$\mathcal{V}_\ell=\mathcal{V}_n$ is then $2$-dimensional (and the
Lyapunov center theorem applies with no need of convexity property,
provided that no other frequency is an integer multiple of
$\omega_n$).

\begin{lemma}\label{lm:El}
  If $2\leq \ell \leq N/2$, $\mathcal{V}_\ell$ lies in a submanifold
  of fixed vertical angular momentum and in the reduced space the
  restriction of the energy to $\mathcal{V}_\ell$ is definite
  positive.
\end{lemma}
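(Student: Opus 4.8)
The plan is to use the fact that a tangent vector in $\mathcal{V}_\ell$ is a \emph{purely vertical} infinitesimal variation $\delta x=(0,\delta z)$ of the relative equilibrium, with $\delta z$ lying in the $\omega_\ell$-eigenspace of $\mathcal{W}$, and that such variations decouple from the horizontal data exactly as (VE) splits into (HVE) and (VVE). Writing $h_j=a_j+ib_j$ for the horizontal position of the $j$th body, the vertical angular momentum $L_z=\sum_j m_j(a_j\dot b_j-b_j\dot a_j)$ depends only on the horizontal positions and velocities; since a variation in $\mathcal{V}_\ell$ leaves these untouched, $L_z$ is exactly constant on $\mathcal{V}_\ell$. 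This gives the first assertion at once.

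The substantive point, and the one on which the restriction $2\le\ell\le N/2$ is really used, is that $\mathcal{V}_\ell$ is in fact tangent to the level set of the \emph{full} angular momentum, so that it descends to a genuine subspace of the reduced phase space. I would compute the first-order variation of the horizontal momentum $L_x+iL_y$ along the eigenmodes $S(N,\ell,\pm1)$, i.e. along $z_j(t)=\Re(\zeta^{\pm\ell j}e^{i\omega_\ell t})$ superposed on $h_j(t)=e^{i\omega_1 t}\zeta^j$. A short computation gives
$$\delta(L_x+iL_y)=i\sum_{j=0}^{N-1}m_j\bigl(z_j\dot h_j-\dot z_j h_j\bigr),$$
and after substituting the eigenmodes every term is a multiple of a geometric sum $\sum_{j}\zeta^{(1\pm\ell)j}$. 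Since $1\pm\ell\not\equiv0\pmod N$ precisely when $2\le\ell\le N/2$ (the excluded values $\ell=1$ and $\ell=N-1$ being the rigid tilts that change the \emph{direction} of the angular momentum, absorbed instead into $\mathcal{E}_1$ in Lemma~\ref{lm:E1}), all these sums vanish, so $\delta L_{hor}=0$. Hence $\mathcal{V}_\ell\subset\ker dL$, so it lies in the fixed-momentum submanifold; and since rotations about the vertical axis act trivially on the heights $z_j$, passing to the quotient alters neither $\mathcal{V}_\ell$ nor the quadratic form carried on it.

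It then remains to restrict the energy. On a vertical variation the kinetic term contributes $|\dot z|^2$, while the rotating-frame and angular-momentum corrections involve horizontal data only and therefore do not enter the vertical block of the Hessian (again by the Pythagorean splitting there are no horizontal–vertical cross terms). Thus the second variation of the energy along $(z,\dot z)\in\mathcal{V}_\ell$ is
$$|\dot z|^2-d^2U(C)(z,z)=|\dot z|^2-\langle z,\mathcal{W}z\rangle=|\dot z|^2+\omega_\ell^2\,|z|^2,$$
using $\mathcal{W}z=-\omega_\ell^2 z$ on $\mathcal{V}_\ell$ and the identification $d^2U(C)(z,z)=\langle z,\mathcal{W}z\rangle$ from section~\ref{sec:vvRe}. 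As the eigenvalues $-\omega_k^2$ of $\mathcal{W}$ are negative (because the Newtonian force is attractive), the right-hand side is manifestly positive definite, which proves the lemma.

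The hard part is the angular-momentum computation of the second paragraph: everything else is bookkeeping, but it is exactly the vanishing of the characters $\sum_j\zeta^{(1\pm\ell)j}$ that both licenses speaking of the reduced energy on $\mathcal{V}_\ell$ and separates the genuine vertical oscillations $\ell\ge2$ from the rigid tilt $\ell=1$ treated separately in Lemma~\ref{lm:E1}. Note that Lemma~\ref{evOrder} is not needed here; the positivity comes solely from the attractivity of the force, the ordering $\omega_1<\cdots<\omega_{\lfloor N/2\rfloor}$ being relevant only when the blocks are combined into $\mathcal{F}$ and in the attendant resonance discussion.
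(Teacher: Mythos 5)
Your proof is correct and follows essentially the same route as the paper's: the angular-momentum claims rest, as there, on the fact that the horizontal components of $L$ restricted to $\mathcal{V}_\ell$ reduce to the character sums $\sum_j\zeta^{(\ell\pm1)j}$, which vanish because $\ell\pm1\not\equiv 0\pmod N$ for $2\le\ell\le N/2$, while the vertical component involves only the (frozen) horizontal data; the energy is then evaluated blockwise on the purely vertical variations. The only point of divergence is the potential block, where you invoke the identity $d^2U(C)(z,z)=\langle z,\mathcal{W}z\rangle=-\omega_\ell^2|z|^2$ together with the negativity of the spectrum of $\mathcal{W}$, instead of the paper's explicit second-order expansion of $\sum_{j<k}\|x_k-x_j\|^{-1}$ in the coordinates $(a,b)$; this is a mild but genuine simplification which moreover treats the two-dimensional case $\ell=N/2$ on the same footing as the generic one.
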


\begin{proof}
  A point on $\mathcal{V}_\ell$ can be uniquely represented by
  $(x,\dot x)$ with
  \begin{eqnarray*}
    x_j &=&(\cos j\theta, \sin j\theta, a \cos j\ell\theta + b\sin
    j\ell\theta)\\
    \dot x_j &=&(-\omega_1 \sin j\theta, \omega_1 \cos j\theta,
    c\omega_\ell \sin j\ell\theta + d \omega_\ell \cos j\ell\theta),
  \end{eqnarray*}
  with $\theta = 2\pi/N$, as before, and where $a,b,c,d\in \R$;
  $(a,b,c,d)$ is a coordinate system on $\mathcal{V}_\ell$, except in
  the case $N=2n$ and $\ell=n$, where $(a,c)$ alone is a coordinate
  system (with $b=d=0$).

  The statement on the angular momentum comes from the fact that the
  angular momentum of the $j$-th body is
  $$L_j = 
  \begin{pmatrix}
    \cos j\theta \\ \sin j\theta \\ a \cos j\ell\theta + b \sin
    j\ell\theta 
  \end{pmatrix}
  \wedge
  \begin{pmatrix}
    -\omega_1 \sin j\theta\\
    \omega_1 \cos j\theta\\
    \omega_\ell c\cos j\ell\theta + \omega_\ell d \sin j\ell\theta
  \end{pmatrix};$$ its vertical component depends only on the radius
  of the horizontal circle, and its two horizontal components boil
  down to degree-one trigonometric polynomials in $j(\ell+1)\theta$
  and $j(\ell-1)\theta$, whose sums with respect to $j \in
  \{0,...,N-1\}$ are zero if $\ell > 1$.

  We now need to compute the quadratic part of $H$ with respect
  to $a,b,c,d$. A straightforward computation shows that the kinetic
  energy is
  $$\frac{K}{2} = \frac{1}{2} \sum_{j=0}^{N-1} \|\dot x_j\|^2 =
  \frac{N}{2} \left( \omega_1^2 + \omega_\ell^2 (c^2 + d^2) \right),$$
  hence positive definite with respect to $c$ and $d$. 
  On the other hand, the potential part depends only on $a$ and
  $b$ and, at the second order,
  \begin{eqnarray*}    
  -U &=& - \sum_{0\leq j < k \leq N-1} \dfrac{1}{\| x_k - x_j \|}\\
  &=& - \sum_{j<k} \dfrac{1}{4 |\sin (j-k) \frac{\ell \theta}{2}|}
  \left(   
    \begin{array}[c]{l}
      1 -  a^2 \sin^2 (j+k) \frac{\ell\theta}{2} -   
      b^2 \cos^2 (j+k) \frac{\ell\theta}{2} +\\
      2 ab \sin (j+k) \frac{\ell\theta}{2} \cos (j+k)
      \frac{\ell\theta}{2} + \cdots
    \end{array} \right)\\
  &=& -N \omega_1^2 + \sum_{j<k} \dfrac{\left( a \sin (j+k)
      \frac{\ell\theta}{2} - b \cos (j+k) \frac{\ell\theta}{2}
    \right)^2}{4 |\sin (j-k) \frac{\ell \theta}{2}|} + \cdots,
  \end{eqnarray*}
  which is positive, definite if $\ell \neq N/2$ and null if $\ell =
  N/2$. 
\end{proof}

We now complete the proof of the proposition.

\begin{proof}
  We want to show that in the neighborhood of the Lagrange relative
  equilibrium the quadratic part of the energy $H$ is positive
  definite on the space
  $$\mathcal{F} = \mathcal{H}_1 \bigoplus \oplus_{1\leq \ell \leq N/2}
  \mathcal{V}_\ell.$$

  The two lemmas above show that the quadratic part of the energy $H$
  is positive definite in restriction to $\mathcal{H}_1\oplus
  \mathcal{V}_1$ and to each $\mathcal{V}_\ell$, $\ell \geq 2$. As
  eigenspaces of a symplectic linear operator (namely, the
  linearization of the reduced vector field) are orthogonal, it
  follows that the aforementioned spaces are pairwise symplectically
  orthogonal. Since additionally they are invariant by the flow of the
  quadratic part $Q$ of $H|_\mathcal{F}$, $Q$ does not have
  off-diagonal terms.  Hence it is positive definite.
\end{proof}

\subsection{The first cases}

We will now describe the simplest families. It is interesting that new
qualitative features appear for each successive value of $N$ from~3
to~6.

A natural name for each Lyapunov family we study would be the label
$S(N,k,\eta)$ of the tangent Lyapunov cylinder in the inertial frame.
On the other hand, for the sake of concreteness we like to look at a
given family in rotating frames where some fixed symmetry
$G_{r/s}(N,k,\eta)$ holds. This amounts to considering the Lyapunov
family as a deformation of the Lyapunov cylinder $S_{r/s}(N,k,\eta)$.
Among the infinitely many possible choices, we have selected the
symmetry of the most remarkable periodic orbit of the (global) family
in the inertial frame. Of course, this is a subjective choice and a
family may bear several names.

\paragraph{Three bodies}


The case of $N=3$ bodies is fully analyzed in~\cite{CF2}, which we now
summarize. The eigenvalues of the linearized reduced vector field
(which can be computed following~\cite{Mo}) are:

\begin{tabular}[t]{ll}
   Horizontally: &$\pm i \omega_1, \left(\pm \sqrt{2}
     \pm i\right) \, \omega_1$\\
   Vertically: &$\pm i \omega_1$
\end{tabular}

The eigenvalue $\pm i \omega_1$ has multiplicity 2 (1 horizontal and 1
vertical), corresponding to two Lyapunov families:

-- The horizontal family of homographic motions; as a curiosity, see
figure~\ref{fig:choreoHomog} for a view of a member of the family in a
choreographic rotating frame. The so-called proper degeneracy of the
Newtonian potential shows up here in that the family has no torsion.

\begin{figure}[h]
    \centering
    \includegraphics[width=5cm]{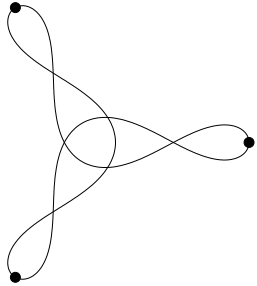}
    \caption{Homographic solution in a choreographic frame, rotating
      twice per period}
    \label{fig:choreoHomog}
\end{figure}

-- The vertical $P_{12}$-family, tangent to the Lyapunov cylinder
$S(3,1,-1)$. 

\subparagraph{The $P_{12}$-family ($S(3,1,-1)$)} Because of the $1-1$
resonance between the vertical and horizontal eigenvalues $\pm i
\omega_1$, one has to understand the third order normal form of the
vector field in order to prove that there exists a unique Lyapunov
family tangent to a non-trivial solution of the (VVE); in an
adequately rotating frame, this unique family possesses a
$G_2(3,1,-1)=D_{6}$ symmetry, which is the $\Gamma_1$-symmetry
described in \cite{CFM}. See figure~\ref{fig:p12}. As elsewhere, the
solutions have been computed using 

-- the Dopri-8 algorithm~\cite{HNW}, as implemented in the C language by
F. Joutel and M. Gastineau (Astronomie et Syst{\`e}mes dynamiques,
IMCCE), for integrating Newton's equations

-- a hybrid Newton algorithm, implemented in the Gnu Scientific
Library \cite{GSL}, for solving symmetry equations.

The germ of the $P_{12}$-family was described in \cite{Ma1} as the
family of relatively periodic solutions of the three-body problem with
the highest possible symmetry; it is immediately after he learnt about
the existence of the Eight that Christian Marchal noticed \cite{Ma2}
that these solutions turned into choreographies in a rotating frame.
This is the origin of \cite{CF1}, where we give its full scope to this
remark, and prove in particular that:

-- the flow of the reduced three-body problem in the neighborhood of
the Lagragange relative equilibrium has a unique 4-dimensional
center manifold $\mathcal{N}$,

-- the energy levels within $\mathcal{N}$ are 3-spheres,

-- the restriction of the flow to an energy level has a Poincar{\'e}
section diffeomorphic to an annulus, whose return map is a monotone
twist map which is the identity on the boundary corresponding to the
homographic motions.

\paragraph{Four bodies}

$$
\begin{array}[t]{ll}
   \mbox{Horizontal eigenvalues}\\
   \quad \pm i \omega_1\\

   \quad \left( \pm \frac{1}{7}\,\sqrt {56-14\,\sqrt {2}} \pm \,
     i\right) \, \omega_1
   &\simeq \left( \pm 0.8595325038 \pm \, i \right) \, \omega_1\\

   \quad \big(\pm \sqrt {42\,\sqrt {-16+18\,\sqrt {2}}-49}
   &\simeq \big( \pm 0.6394812009\\

   \qquad \pm \, i \,\sqrt {42\, \sqrt {-16+18\,\sqrt
       {2}}+49}\big) \, \frac{\omega_1}{14}
   &\qquad \pm 0.9533814590 \, i \big) \,
   \omega_1 \\

   \mbox{Vertical eigenvalues} \\
   \quad \pm i\omega_1 = \pm \frac{i}{2}\,\sqrt {2\,\sqrt {2}+1} \\

   \quad \pm i\omega_2 = \pm i\sqrt[4]{2}\omega_1 &\simeq \pm
   1.2155625241 \, i \, \omega_1
\end{array}$$
There are now two non-trivial vertical Lyapunov families.

\subparagraph{The 4-body, 3-lobe chain family ($S(4,1,-1)$)}

The Weinstein-Moser theorem applied to the reduced vector field and to
the eigenvalue $\pm i \omega_1$ (using lemma~\ref{lm:E1} and the fact
that $\omega_1/\omega_2 = 2^{-1/4} \notin \N$) implies the existence
of at least one more Lyapunov family corresponding to this frequency
in addition to the horizontal homographic family.

Numerical computations show that the Lyapunov cylinder $S(4,1,-1)$ is
tangent to a family which is choreographic in a rotating frame which
starts making two full turns per period in the negative direction (see
the top part of figure~\ref{fig:famGerver}). This does not follow from
the Lyapunov theorem, for the frequency $\omega_1$ is also the
frequency of the horizontal homographic family.

Yet without uniqueness it is theoretically not obvious that the family
shares the $G_2(4,1,-1)$-symmetry with $S(4,1,-1)$, in the rotating
frame making one turn per period, at the limit of the regular
pentagon. The proof of the local existence and uniqueness of this
family would require similar computations (of a normal form of the
third order) and arguments as for the $P_{12}$-family described above.

\subparagraph{The 4-body Hip-Hop family ($S(4,2,\pm 1)$)}

Since $\omega_2$ has multiplicity one and since the only other
frequency, $\omega_1$, is not an integer multiple of $\omega_2$, it
follows from Lyapunov's theorem that there exists a unique family of
relatively periodic orbits bifurcating from the square with vertical
frequency close to $\omega_2$. The eigenmode $S_1(4,2,\pm 1)$ is
tangent to the family and, by uniqueness, the family shares its
symmetry. Hence the family is the Hip-Hop family, with symmetry group
$G_1(4,2,1)=\Z/2\Z\times \Z/4\Z$ (see figure~\ref{fig:hh}). It is
studied in \cite{BCPS}.

\paragraph{Five bodies}

$$
\begin{array}[t]{ll}
   \mbox{Horizontal eigenvalues}\\
   \quad \pm i \, \omega_1\\

   \quad ... \qquad \simeq (\pm 0.9391304549 \pm i) \, \omega_1\\

   \quad ...\qquad \simeq (\pm 0.8281366700 \pm 0.8822431635 \, i) \, \omega_1\\

   \quad ...\qquad \simeq (\pm 0.5028535236 \pm 
0.8822431635 \, i ) \, \omega_1\\

   \mbox{Vertical eigenvalues} \\

   \quad \pm i \omega_1{} = \pm \frac{i\sqrt 
{2}}{4}\, \sqrt {{\frac {\sqrt {2}\sqrt
         {5-\sqrt {5}} \left( \sqrt {5}+2 \right) }{1/4\,\sqrt
         {5}-3/4+2\, \left( 1/4\,\sqrt {5}+1/4 \right) ^{2}}}}\\

   \quad \pm i \omega_2{} = \pm i \omega_1 \sqrt{2} \,\sqrt
   {{\frac {\sqrt {2}\sqrt {5-\sqrt {5}} \left( 11/4+3/4\, \sqrt {5}
         \right) } {\sqrt {2}\sqrt {5-\sqrt {5}} \left( \sqrt {5}+2
         \right) }}} &\simeq \pm 
   1.3281310261 \, i \, \omega_1
\end{array}$$

\subparagraph{The 5-body, 4-lobe chain family ($S(5,1,-1)$)}
Figure~\ref{fig:famChaine5c4b} depicts it in rotating frames which
start by making three full turns in the negative direction (symmetry
$G_4(5,1,-1)$).

\medskip Contrary to its analogue with four bodies, the maximal
vertical frequency $\omega_2$ now has multiplicity two. Using
lemma~\ref{lm:El} and the above expression of $\omega_1/\omega_2
\notin \N$, the Weinstein-Moser theorem proves the existence of at
least two Lyapunov families. Numerically, there are exactly two
families, which thus display the same symmetries as their
corresponding Lyapunov cylinders:

\subparagraph{The 5-body Eight family ($S(5,2,-1)$)} It owes its name to
the $G_2(5,2,-1)$-symmetry it acquires in a frame with $r/s=2$
(figure~\ref{fig:famHuit5c}).

\subparagraph{The 5-body, 3-lobe chain family ($S(5,2,1)$)} It owes
its name to the non-maximal chain $G_3(5,2,1)$-symmetry in a frame
with $r/s=3$ (see figure~\ref{fig:fam5cChaine3b}).

\paragraph{Six bodies}

$$
\begin{array}[t]{ll}
   \mbox{Horizontal eigenvalues}\\

   \quad \pm \, i\, \omega_1\\

   \quad ...&\simeq \pm 0.4687282051 \, i \, \omega_1\\

   \quad ...&\simeq \pm 0.4211614102 \, \omega_1\\

   \quad ...&\simeq \left( \pm 0.9499800584 \pm 
0.8151022048 \, i \right) \, \omega_1\\

   \quad ...&\simeq \left( \pm 0.2986755303 \pm 
0.8151022048 \, i \right) \, \omega_1\\

   \quad ...&\simeq \left( \pm 0.9893611078 \pm \, i \right) \,
   \omega_1\\ 

   \mbox{Vertical eigenvalues} \\

   \quad \pm i \omega_1{} = \pm i \,\sqrt {\frac{5}{4}+ \frac{\sqrt
       {3}}{3}}\\ 

   \quad \pm i \omega_2{} = \pm i \,\sqrt {3+ \frac{\sqrt {3}}{3}}
   &\simeq \pm 1.3991678967 \, i \, \omega_1\\

   \quad \pm i \omega_3 = \pm \frac{i}{2}\,\sqrt {17}
   &\simeq \pm 1.5250481798 \, i \, \omega_1
\end{array}$$

From six bodies on, some horizontal eigenvalues are purely imaginary,
which could give rise to horizontal Lyapunov families. For example,
could the choreography shown in figure~3 (4th of first column) of
\cite{S} belong to such a family? Moreover the purely imaginary
horizontal eigenvalues can resonate with the vertical frequencies, on
which we focus (see the remark before lemma~\ref{lm:El} however).

\subparagraph{The 6-body, 5-loop chain ($S(6,1,-1)$)} It is the $6$-body
instance of maximal chains. Its proof of existence has the same status
as with $4$ and $5$ bodies. 

\subparagraph{Some 6-body Hip-Hops} Since $6$ is not prime, the $6$-body
problem has a richer set of Hip-Hop families than problems with fewer
bodies.

The Weinstein-Moser theorem and numerical experiments indicate the
existence of a Hip-Hop family sharing the Hip-Hop symmetry of and
tangent to the Lyapunov cylinder $S(6,1,-1)$; it belongs to an
invariant problem having as few dimensions as the two-body problem,
where bodies are symmetric with respect to each other within two
groups of three (see part~5 of figure~\ref{fig:vve}).

The second frequency gives rise to two Lyapunov families $S(6,2,1)$
and $S(6,2,-1)$, each belonging to an invariant problem having the
dimensions of the three-body problem, where bodies are symmetric with
respect to each other within three groups of two (see part~6 of
figure~\ref{fig:vve} for $S_1(6,2,-1)$).

Since all other frequencies (vertical or horizontal) are smaller than
$\omega_3$, by the Lyapunov theorem the third vertical frequency gives
rise to a unique Hip-Hop family sharing the symmetries of $S(6,3,\pm
1)$. In a frame rotating once per period (at the limit of the
hexagon), this family is both a Hip-Hop (bodies are symmetric with
respect to each other within two groups of three) and a partial
choreography (bodies chase each other by pairs, along three distinct
closed curves).

\section{Global continuation}
\label{sec:global}

We are interested in the following two questions:

-- Existence: Does the range of frequency rotation $\varpi$ of the
frame over which the family exists contain $0$?

-- Uniqueness: Can one take the frequency $\varpi$ as a monotonous
continuous parameter over the whole family, i.e. has the torsion
constant sign?

Minimization under the $G_{r/s}(N,k,\eta)$-symmetry is a natural tool
for the existence question. It will turn out that, because of
isomorphisms between the actions of different groups
$G_{r/s}(N,k,\eta)$ (cf. section~\ref{sec:isomorphisms}), its use is
essentially restricted to the case $k=n$, i.e. to the largest vertical
frequency.

As a preliminary study we apply the results of
section~\ref{sec:globalMin} to estimate intervals of the rotation
frequency of the frame over which the relative equilibrium family
itself is the sole absolute minimizer under the
$G_{r/s}(N,k,\eta)$-symmetry.

\subsection{Minimization properties of the $N$-gon family under
  $G_{r/s}(N,k,\eta)$-symmetry} 
\label{sec:NGon}

Consider solutions $x(t)=e^{\J\varpi t}y(t)$ of Newton's equations
which, in a frame rotating with frequency $\varpi$, become
$s$-periodic loops $y(t)\in\Lambda^G$, where $G=G_{r/s}(N,k,\eta)$.  A
relative equilibrium solution with the required symmetry accomplishes
one turn in time $\frac{s}{r}$ in the rotating frame. Hence it has
frequency $\hat\omega_1=2\pi\frac{r}{s}+\varpi$ in the inertial frame.

\medskip

Referring to proposition~\ref{prop:criterion}, we now study conditions
under which the inf of the positive $\lambda$'s for which there is a
solution $x(t)=e^{\J\varpi t}y(t)$ with $y(t)\in\Lambda^G$ of the
equation
$$-\ddot x=\lambda\Delta x$$
is equal to 1, where $\Delta$ is computed with
the mutual distances $\bar r_{ij}$ of the
relative equilibrium of frequency
$2\pi\frac{r}{s}+\varpi$ in the inertial frame.
\smallskip

Let $\xi(t)$ be defined by $x(t)=\xi(\sqrt{\lambda}t)$. As $\xi(t)$ is
a solution of $-\ddot\xi=\Delta\xi$, each component of $\xi(t)$ is of
the form $\sum_ka_ke^{\J \hat\omega_kt}$, where
$\omega_1,\omega_2,\cdots,\omega_k,\cdots$ are the frequencies of the
(VVE) and
$$\hat\omega_k=\frac{\omega_k}{\omega_1}(2\pi\frac{r}{s}+\varpi).$$

The linearity of the differential equation and the structure of the
action of $G_{r/s}(N,k,\eta)$ allows us to study separately solutions
lying in the horizontal plane and solutions lying on the vertical
axis: if $y(t)$ is invariant, so are the loops $h(t)$ and $v(t)$ of
$N$-body configurations in $\R^3$ obtained from $y(t)$ by projecting
each body on the horizontal plane and the vertical axis respectively.
\medskip

{\bf 1) VERTICAL SOLUTIONS.}
A solution $x(t)$ of the
(VVE) of the form
$$(x_0(t),\cdots,x_{N-1}(t))\; \hbox{with}\;
\forall j\in\Z/N\Z,
x_j(t)=\bigl(0,0,x_j^z(t)\bigr),$$  can be
written
$$x_j^z(t)=\sum_l \Re
\left[(u_l\zeta^{jl}+v_l\bar\zeta^{jl})
  e^{i\hat\omega_l\sqrt{\lambda}t}\right],\; j\in\Z/N\Z.$$

The symmetry conditions impose not only $s$-periodicity but even
$1$-perio\-dicity. Indeed, if we choose $\beta=0,\delta=0, q=1$, we
get $\theta=1\in\R/s\Z$, hence
$$x^z_j(t)=y^z_j(t)=y^z_j(t-1)=x^z_j(t-1),$$
that is
$$\sum_l \Re
\left[(u_l\zeta^{jl}+v_l\bar\zeta^{jl})(1-e^{-i\hat\omega_l\sqrt{\lambda}})
    e^{i\hat\omega_l\sqrt{\lambda}t}\right]\equiv0.$$
It follows that for any $l$,
$(u_l\zeta^{jl}+v_l\bar\zeta^{jl})(1-e^{-i\hat\omega_l\sqrt{\lambda}})=0$,
that is
$$\hbox{either}\quad u_l=v_l=0\quad\hbox{or}\quad
\exists m\in\Z,\; \hat\omega_l\sqrt{\lambda}=2\pi m.$$
The second possibility occurs for a single index
$l$ provided no relation of the form
$m\hat\omega_{l'}=m'\hat\omega_{l}$ exists with $m,m'\in\Z$.
In any case, as we are only interested
in the values that $\lambda$ can take, we can
restrict the attention to solutions corresponding
to
a single index $l$ such that $\hat\omega_l\sqrt{\lambda}=2\pi m$.
\smallskip

\noindent Now, the invariance of $x(t)$ under the
action of $G_{\frac{r}{s}}(N,k,\eta)$ means that
for every
$(\theta,\delta,\beta,\xi)\in \Z/s\Z\times
\Z/2\Z\times \Z/2\Z\times \{-1,+1\}$,
such that $\theta-\frac{\beta}{2}-k\eta\frac{\delta}{N}=q\in\Z$,
$$\Re
\left[(u_l\zeta^{jl}+v_l\bar\zeta^{jl})e^{i\hat\omega_l\sqrt{\lambda}t}-
    (u_l\zeta^{\xi(j+\delta)l}+v_l\bar\zeta^{\xi(j+\delta)l})
    e^{i[\pi\beta+\hat\omega_l\sqrt{\lambda}\xi(t-\theta)]}\right]\equiv 0.$$
Choosing $\delta=0,\beta=1,\xi=1$, hence
$\theta=1/2$, we get that $m=1+2p$ is odd.

\noindent Choosing $\delta=0,\beta=0,\xi=-1$,
hence $\theta=0$, we get that for all
$j\in\Z/N\Z$,
$(u_l-\bar u_l)\zeta^{jl}+(v_l-\bar
v_l)\bar\zeta^{jl}=0$, which implies that $u_l$
and $v_l$
must be real.

\noindent Finally, for $\xi=1$ the condition is that for every $j,\delta$,
$$u_l\zeta^{jl}\left(1-e^{2\pi
      i\left(-\frac{mk\eta\delta+\delta
          l}{N}\right)}\right)
+v_l\bar\zeta^{jl}\left(1-e^{2\pi
      i\left(-\frac{mk\eta\delta-\delta
          l}{N}\right)}\right)=0,$$
which implies that one of the two coefficients
$u_l$ and $v_l$ must vanish and that, according to
which one does vanish,
$$l=\pm mk\eta\mod N=\pm (1+2p)k\eta\mod N.$$
Finally, the inf of the corresponding $\sqrt\lambda$'s is the inf of the
$\left|\frac{(1+2p)2\pi}{\hat\omega_l}\right|$, that is
$$\inf_p\frac{\omega_1}{\omega_{(1+2p)k\eta}}\left|
    \frac{(1+2p)2\pi}{2\pi\frac{r}{s}+\varpi}\right|,$$
where the frequencies
$\omega_{(1+2p)k\eta}=\omega_{-(1+2p)k\eta}$ involved are the ones of
bifurcating vertical solutions with the required
symmetry (in particular,
$\omega_{k\eta}=\omega_k$ is
always among them).    One deduces immediately
that the condition $\inf \lambda\ge 1$ reads
$$\left|\varpi+2\pi\frac{r}{s}\right|\le
V=\inf_{p>0}\frac{\omega_1}{\omega_{(1+2p)k\eta}}|(1+2p)2\pi|,$$
where the retriction to the $p\ge0$ comes from
the fact that if $p'=-(1+p)$, one has
$|1+2p'|=|1+2p|$, hence
$\omega_{1+2p'k\eta}=\omega_{1+2pk\eta}$.
\medskip

{\bf 2) HORIZONTAL SOLUTIONS.}  The general ``horizontal'' solution
$$\xi=(\xi_0,\cdots,\xi_{N-1}), \;\hbox{with}\;
\xi_j(t)=(\xi_j^h(t),0),$$
of $ -\ddot\xi=\Delta\xi$ is such that,
identifying $\R^2$ with $\C$, there exists
$k_0,a_l, b_l,c_l,d_l\in\C$ such that
$$\forall j,\; \xi_j^h(t)=k_0+\sum_{l\not=0}\left[\bigl(a_le^{i\hat\omega_lt}
    +b_le^{-i\hat\omega_lt}\bigr)\zeta^{jl}+\bigl(c_le^{i\hat\omega_lt}
    +d_le^{-i\hat\omega_lt}\bigr)\bar\zeta^{jl}\right].$$
If $N=2n+1$ is odd, to each of the $n$ pairs
$\lambda_l=\lambda_{N-l}=-\hat\omega_l^2$ of
non-zero
eigenvalues of the matrix which defines $\Delta$,
corresponds an eight dimensional space of
solutions
parametrized by the four complex constants
$a_l,b_l,c_l,d_l$. This gives the dimension $4n$
of the phase
space after quotienting by the translations.
\smallskip

\noindent If $N=2n$ is even, to the eigenvalue
$\lambda_n$ corresponds only a four dimensional
space
of solutions because $\zeta^n=\bar\zeta^n=-1$. In
this case the two complex parameters are
$a_l+c_l$ and $b_l+d_l$.
\smallskip

\noindent We shall now study the conditions
imposed on horizontal solutions by the invariance
under
$G_\frac{r}{s}(N,k,\eta)$.
\smallskip

1) The condition of $s$-periodicity on $y(t)$
imposes that, in addition to $\hat\omega_0=0$,
only one frequency
$\hat\omega_l$ is present: indeed, for all $j$ and all $t$,
$$0=\sum_{l\ne
    0}\bigl[(e^{i\alpha_ls}-1)(a_l\zeta^{jl}+c_l\bar\zeta^{jl})e^{i\alpha_lt}
+(e^{i\beta_ls}-1)(b_l\zeta^{jl}+d_l\bar\zeta^{jl})e^{i\beta_lt}\bigr],$$ where
$$\alpha_l=\hat\omega_l\sqrt\lambda-\varpi,\;
\beta_l=-\hat\omega_l\sqrt\lambda-\varpi.$$
Hence, as $\omega_l\pm\omega_{l'}\not=0$ if $l\not= l'$,
$$\forall j,l,\; (e^{i\alpha_ls}-1)(a_l\zeta^{jl}+c_l\zeta^{jl})=
(e^{i\beta_ls}-1)(b_l\zeta^{jl}+d_l\bar\zeta^{jl})=0,$$
As $a_l\zeta^{jl}+c_l\bar\zeta^{jl}$ (resp.
$b_l\zeta^{jl}+d_l\bar\zeta^{jl}$) cannot be
equal to 0 for all
$j=0,\cdots,N-1$ except if $a_l=c_l=0$ (resp.
$b_l=d_l=0$), one deduces that there exists a
(necessarily unique) $l\ne 0$ such that
$e^{i\alpha_ls}=1$ or $e^{i\beta_ls}=1$, that is
$$\alpha_ls=(\hat\omega_l\sqrt\lambda-\varpi)s=m2\pi, \quad\hbox{\rm or} \quad
\beta_ls=(-\hat\omega_l\sqrt\lambda-\varpi)s=m2\pi,$$
i.e.
$$y_j(t)=(k_0+(a_l\zeta^{jl}+c_l\bar\zeta^{jl})e^{i2\pi
    \frac{m}{s}t},0)\quad\hbox{or}\quad
y_j(t)=(k_0+(b_l\zeta^{jl}+d_l\bar\zeta^{jl})e^{i2\pi \frac{m}{s}t},0).$$
Both cases are identical hence we suppose that the first identity holds.
Hence we are looking for the invariance of
$$y_j(t)=\left(k_0+(u_l\zeta^{jl}+v_l\bar\zeta^{jl})e^{i2\pi
      \frac{m}{s}t},0\right),$$
corresponding to
$$\sqrt{\lambda}=\frac{\omega_1}{\omega_l}
\left|\frac{\varpi+2\pi\frac{m}{s}}{\varpi+2\pi\frac{r}{s}}\right|.$$

This means that, for all $(\theta,\delta,\beta,\xi)\in \R/s\Z
\times\Z/N\Z\times\Z/2\Z\times\{-1,+1\}$ with
$$\theta-\frac{\beta}{2}-k\eta\frac{\delta}{N}=q\in\Z,$$
one has for all $j\in \Z/{N\Z}$ and for all $t$,
$y_j(t)=e^{2\pi i\alpha}{\bar y}^\xi _{\xi(j+\delta)}(\xi(t-\theta))$,
where
$\;\alpha=\frac{r}{s}\theta-\frac{\delta}{N}\pmod{1}$.
As $\xi^2=1$, this is equivalent to
the following:
$$k_0+(u_l\zeta^{jl}+v_l\bar\zeta^{jl})e^{i2\pi
    \frac{m}{s}t}=\left[\bar{k_0}^\xi+({\bar
      u_l}^\xi\zeta^{(j+\delta)l}+{\bar v_l}^\xi\bar\zeta^{(j+\delta)l})e^{2\pi
      i\frac{m}{s}(t-\theta)}\right]e^{2\pi i\alpha}\;
.$$  Let us first take $\xi=1$ and
hence ${\bar u_l}^\xi=u_l$ and ${\bar
    v_l}^\xi=v_l$.  The condition becomes the
vanishing of the expression
$$k_0(1-e^{2\pi i\alpha})+
u_le^{2\pi i(\frac{jl}{N}+\frac{mt}{s})}\left(1-e^{2\pi
      i(\frac{\delta l}{N}+\alpha-\frac{m\theta}{s})}\right)$$
$$\phantom{k_0(1-e^{2\pi i\alpha})} +v_le^{2\pi
    i(-\frac{jl}{N}+\frac{mt}{s})}\left(1-e^{2\pi
      i(-\frac{\delta l}{N}+\alpha-\frac{m\theta}{s})}\right)$$
for all $j$ and all $t$. This
implies the vanishing of one or the
other of the following complex numbers~:
$$A=1-e^{2\pi
    i(\frac{\delta}{N}l+\alpha-\frac{m}{s}\theta)},\quad\hbox{or}\quad
B=1-e^{2\pi i(-\frac{\delta}{N}l+\alpha-\frac{m}{s}\theta)}.$$
\goodbreak

\indent -- First case: $A=0$, that is
$y_j(t)=(k_0+u_l\zeta^{jl}e^{2\pi
    i\frac{m}{s}t},0)$.
\smallskip

-- Second case: $B=0$, that is
$y_j(t)=(k_0+v_l\bar\zeta^{jl}e^{2\pi
    i\frac{m}{s}t},0)$.
\smallskip

In both cases, $k_0=0$ unless $\alpha=0\mod 2\pi$
but $k_0$ does not play any role in the
determination of
$\lambda$.

\medskip
Taking now $\xi=-1$, we see that $u_l$ in the
first case, $v_l$ in the second one, must be real.
Replacing $\alpha$ by
$\frac{r}{s}\theta-\frac{\delta}{N}\mod 1$, we get
$$\frac{\delta}{N}(l-1)+\frac{r-m}{s}\theta\in\Z\quad\hbox{or}
\quad -\frac{\delta}{N}(l+1)+\frac{r-m}{s}\theta\in\Z.$$

Finally, replacing $\theta$ by
$\frac{\beta}{2}+k\eta\frac{\delta}{N}+q$, we
obtain the
following conditions:
$$\left[l-1+\frac{r-m}{s}k\eta\right]\frac{\delta}{N} +
\frac{r-m}{s}(\frac{\beta}{2}+q)\in\Z,$$
or
$$\left[-(l+1)+\frac{r-m}{s}k\eta\right]\frac{\delta}{N} +
\frac{r-m}{s}(\frac{\beta}{2}+q)\in\Z.$$

Taking $\beta=0, q=0,\delta=1$, this implies that
\smallskip

$l-1+\frac{r-m}{s}k\eta=0\mod N$ in the first case,

$-(l+1)+\frac{r-m}{s}k\eta=0\mod N$ in the second one.
\smallskip

Taking now $\beta=1$, this implies in both cases that, for any
$q\in\Z/s\Z$,

$$\frac{r-m}{s}(\frac{1}{2}+q)=0\mod
1,\quad\hbox{\rm i.e.}\quad \exists p\in\Z,\;
m=r-2ps.$$

It follows that $ l=1-2pk\eta\mod N$ in the first
case, $l=-(1-2pk\eta)\mod N$ in the second one.
Notice that both cases correspond to the same type of solutions
$$y_j(t)=(k_0+a_l\zeta^{j(1-2pk\eta)}e^{2\pi i(\frac{r}{s}-2p)t},0),$$
(and that, when in addition $1-2pk\eta$ is prime to $N$, these
solutions are up to translation and scaling, {\it a relative
  equilibrium of the regular $N$-gon with a different period and a
  relabelling of the bodies}, that is one of the relative equilibrium
solutions found in section \ref{sec:isomorphisms}). This shows that
the case where $A=B=0$ will not lead to different solutions and hence
need not be studied separately.  In both cases, as
$\omega_l=\omega_{-l}$,
$$\sqrt{\lambda}=\frac{\omega_1}{\omega_{1-2pk\eta}}
\left|\frac{\varpi+2\pi\frac{r}{s}-4p\pi}
   {\varpi+2\pi\frac{r}{s}}\right|.$$ Notice that the value $p=0$
corresponds to $\lambda=1$. Hence, in order to insure that the minimum
value of $\lambda$ is 1, it is enough to insure that $\lambda$ is
always $\ge 1$.  This amounts to the following conditions, for which
{\it one should remember that the values of $p$ must be such that
   $1-2pk\eta\ne 0\mod N$.}  \goodbreak

\smallskip
{\it i)} If  $p>0$ and $\varpi+2\pi\frac{r}{s}\ge 4p\pi$,  the condition is
$$(\omega_{1-2pk\eta}-\omega_1)(\varpi+2\pi\frac{r}{s})\le -4p\pi\omega_1,$$
\indent which can never be satisfied because for every $l$,
$\omega_l-\omega_1\ge 0$; this implies that $$\varpi+2\pi\frac{r}{s}<
4\pi;$$ \smallskip

{\it ii)} If  $p>0$ and $0<\varpi+2\pi\frac{r}{s}<4p\pi$, the condition is
$$(\omega_1+\omega_{1-2pk\eta})(\varpi+2\pi\frac{r}{s})\le 4p\pi\omega_1;$$

{\it iii)} If  $p>0$ and $\varpi+2\pi\frac{r}{s}<0$, the condition is
$$(\omega_1-\omega_{1-2pk\eta})(\varpi+2\pi\frac{r}{s})\le 4p\pi\omega_1;$$

{\it iv)} If  $p<0$ and $\varpi+2\pi\frac{r}{s}\le 4p\pi$, the condition is
$$(\omega_1-\omega_{1-2pk\eta})(\varpi+2\pi\frac{r}{s})\le 4p\pi\omega_1,$$
\indent which can never be satisfied for the same
reason as above; and this implies
\indent that $$\varpi+2\pi\frac{r}{s}>-4\pi;$$
\smallskip

{\it v)} If  $p<0$ and $4p\pi\le
\varpi+2\pi\frac{r}{s}\le 0$, the condition is
$$-(\omega_1+\omega_{1-2pk\eta})(\varpi+2\pi\frac{r}{s})\le -4p\pi\omega_1;$$

{\it vi)} If  $p<0$ and $\varpi+2\pi\frac{r}{s}\ge 0$,  the condition is
$$(\omega_{1-2pk\eta}-\omega_1)(\varpi+2\pi\frac{r}{s})\le -4p\pi\omega_1.$$

Finally, the horizontal conditions are that
$-H-\le \varpi+2\pi\frac{r}{s}\le +H_+$, where
$$H_+=\inf\left\{4\pi,
    {\inf_{p>0}}^*\left(\frac{4p\pi\omega_1}{\omega_1+\omega_{1-2pk\eta}},
      \frac{4p\pi\omega_1}{\omega_{1+2pk\eta}-\omega_1}
    \right)\right\},$$
$$H_-=\inf\left\{4\pi,
    {\inf_{p>0}}^*\left(\frac{4p\pi\omega_1}{\omega_1+\omega_{1+2pk\eta}},
      \frac{4p\pi\omega_1}{\omega_{1-2pk\eta}-\omega_1}
    \right)\right\},$$
\textit{where  the ${{\inf}^*}$ means that the
    index $k$ of any $\omega_k$ involved can never be
    0}.

Recalling the vertical condition
$$\left|\varpi+2\pi\frac{r}{s}\right|\le
V=\inf_{p\ge 0}\frac{\omega_1}{\omega_{(1+2p)k\eta}}|(1+2p)2\pi|,$$
we obtain the

\begin{theorem}\label{theorem:absolute}
  The following condition implies that the relative equilibrium
  solution of the equal mass regular $N$-gon with frequency
  $2\pi\frac{r}{s}$ in a frame rotating with frequency $\varpi$ is the
  sole absolute minimizer of the action among paths which in the
  rotating frame are $s$-periodic loops with the
  $G_{r/s}(N,k,\eta)$-symmetry of some solution of the vertical
  variational equation:
  $$-\inf(V,H-)\le \varpi+2\pi\frac{r}{s}\le \inf(V,H_+).$$
\end{theorem}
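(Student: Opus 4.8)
The plan is to deduce Theorem~\ref{theorem:absolute} directly from the minimization criterion of Proposition~\ref{prop:criterion}: since the relative equilibrium $\bar y(t)$ is the sole absolute minimizer of $\mathcal{A}_\varpi$ in $\Lambda^G$ as soon as $\lambda_\varpi^G = 1$, it suffices to establish that $\lambda_\varpi^G = 1$ precisely on the asserted interval. Recall, from the last reformulation of $\lambda_\varpi^G$, that it is the smallest $\lambda \ge 0$ admitting a solution $\xi(t)$ of $-\ddot\xi = \Delta\xi$ for which $y(t) = e^{-\J\varpi t}\xi(\sqrt\lambda\, t)$ lies in $\Lambda^G$, i.e. is $s$-periodic and $G_{r/s}(N,k,\eta)$-invariant. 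The relative equilibrium itself always furnishes such a solution with $\lambda = 1$ (the value $p=0$ below), so $\lambda_\varpi^G \le 1$ automatically; the whole content of the theorem is therefore the reverse inequality $\lambda_\varpi^G \ge 1$, to be guaranteed by the stated bounds on $\varpi + 2\pi\frac{r}{s}$.

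First I would exploit the fact that $\Delta$ acts componentwise and that the $G_{r/s}(N,k,\eta)$-action preserves the splitting $\R^3 = \C \times \R$. By linearity it then suffices to minimize $\lambda$ separately over \emph{vertical} solutions (bodies on the vertical axis) and over \emph{horizontal} solutions (bodies in the plane), and to take the smaller of the two infima. Each such solution decomposes on the circulant eigenbasis $\zeta^{jl}, \bar\zeta^{jl}$ of $\Delta$ with frequencies $\hat\omega_l = \frac{\omega_l}{\omega_1}(2\pi\frac{r}{s}+\varpi)$, and the strategy in each case is identical: impose $s$-periodicity to kill all but a single frequency index $l$, then feed the invariance equations of $G_{r/s}(N,k,\eta)$, choosing convenient generators $(\theta,\delta,\beta,\xi)$, to pin down which index $l$ is admissible and with what multiplier $m$, and finally read off the resulting value of $\sqrt\lambda$.

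For the vertical part, the periodicity and the generators with $\xi = \pm 1$ force the surviving index to satisfy $l = \pm(1+2p)k\eta \bmod N$ with the multiplier $m = 1+2p$ odd, yielding $\sqrt\lambda = \frac{\omega_1}{\omega_{(1+2p)k\eta}}\bigl|\frac{(1+2p)2\pi}{2\pi\frac{r}{s}+\varpi}\bigr|$; requiring $\lambda \ge 1$ for every admissible $p \ge 0$ gives exactly the vertical bound $|\varpi+2\pi\frac{r}{s}| \le V$. For the horizontal part I would run the same scheme, but now the invariance equations couple $m$, $l$ and the parities: the generators $\beta = 0,\ \delta = 1$ impose $l = \pm(1-2pk\eta)\bmod N$, while $\beta = 1$ forces $m = r - 2ps$, so that $\sqrt\lambda = \frac{\omega_1}{\omega_{1-2pk\eta}}\bigl|\frac{\varpi+2\pi\frac{r}{s}-4p\pi}{\varpi+2\pi\frac{r}{s}}\bigr|$. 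Imposing $\lambda \ge 1$ across the sign cases of $p$ and of $\varpi + 2\pi\frac{r}{s}$, cases \textit{i}--\textit{vi}, produces, after discarding the impossible inequalities coming from $\omega_l - \omega_1 \ge 0$, the two-sided horizontal bound $-H_- \le \varpi + 2\pi\frac{r}{s} \le H_+$. Combining the two constraints, $\lambda_\varpi^G \ge 1$ holds exactly when $-\inf(V,H_-) \le \varpi+2\pi\frac{r}{s} \le \inf(V,H_+)$, which is the claim.

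I expect the main obstacle to be the horizontal case analysis. Two points require genuine care. First, one must verify that the degenerate situation $A = B = 0$ does not contribute a new, smaller value of $\lambda$; here the point is that such solutions are, up to relabelling and rescaling, themselves regular $N$-gon relative equilibria, the coincidences of section~\ref{sec:isomorphisms}, hence produce nothing beyond the families already parametrized by $p$. Second, one must correctly track the constraint $1-2pk\eta \ne 0 \bmod N$ excluding the forbidden index $0$, which is exactly the meaning of the starred infima ${\inf}^*$, and check that in cases \textit{i} and \textit{iv} the relevant inequality can never hold, thereby forcing the crude bound $|\varpi+2\pi\frac{r}{s}| < 4\pi$ that appears inside $H_\pm$. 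The vertical computation, by contrast, is essentially a direct specialization and should pose no difficulty.
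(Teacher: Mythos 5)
Your proposal is correct and follows essentially the same route as the paper: reduce to showing $\lambda_\varpi^G\ge 1$ via Proposition~\ref{prop:criterion}, split into vertical and horizontal solutions of $-\ddot\xi=\Delta\xi$, use $s$-periodicity to isolate a single frequency index, and extract the admissible $(l,m)$ from the group generators, giving the bound $V$ vertically and the six-case analysis yielding $H_\pm$ horizontally. You also correctly flag the two delicate points the paper itself handles, namely that $A=B=0$ yields only relabelled relative equilibria and that the starred infima exclude the index $0$.
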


When $k=n$, the horizontal contribution disappears; in this case, the
corresponding Lyapunov families can be searched for as absolute
minimizers of the action with the given symmetry constraints:

\begin{corollary}
  The following condition implies that the relative equilibrium
  solution of the equal mass regular $N$-gon with frequency
  $2\pi\frac{r}{s}$ in a frame rotating with frequency $\varpi$ is the
  sole absolute minimizer of the action among paths which in the
  rotating frame are $s$-periodic loops with the
  $G_{r/s}(N,n,\eta)$-symmetry:
  $$-V\le \varpi+2\pi\frac{r}{s}\le V.$$
\end{corollary}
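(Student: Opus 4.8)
The plan is to deduce Theorem~\ref{theorem:absolute} directly from Proposition~\ref{prop:criterion}, which already reduces the problem to a single scalar equality: $\bar y(t)$ is the sole absolute minimizer of $\mathcal{A}_\varpi$ in $\Lambda^G$ if and only if $\lambda_\varpi^G=1$. So the whole task is to show that the hypothesis on $\varpi+2\pi\frac{r}{s}$ forces $\lambda_\varpi^G=1$ for $G=G_{r/s}(N,k,\eta)$ and the regular $N$-gon. I would use the variational characterization established earlier, namely that $\lambda_\varpi^G$ is the smallest $\lambda\ge 0$ for which $-\ddot\xi=\Delta\xi$ admits a solution $\xi$ with $y(t)=e^{-\J\varpi t}\xi(\sqrt\lambda\,t)\in\Lambda^G$. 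Because $\Delta$ acts componentwise and the $G_{r/s}(N,k,\eta)$-action preserves the splitting of a variation into its $\C$-valued (horizontal) and $\R$-valued (vertical) parts, the admissible solutions split accordingly and $\lambda_\varpi^G$ is the minimum of the least admissible $\lambda$ over the two families separately.

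The first observation I would make is that the value $\lambda=1$ is always admissible: taking $p=0$ in the horizontal family returns $l\equiv 1$, $\omega_l=\omega_1$ and $\sqrt\lambda=1$, which is nothing but the relative equilibrium itself. Hence $\lambda_\varpi^G\le 1$ unconditionally, and the equality $\lambda_\varpi^G=1$ is equivalent to the statement that no admissible $\lambda$ is strictly smaller than $1$, i.e. that every candidate $\sqrt\lambda$ coming from either family is $\ge 1$. This is exactly the content of the two explicit computations carried out above. On the vertical side, the symmetry relations quantize the surviving circulant index to $l=\pm(1+2p)k\eta\bmod N$ and give $\sqrt\lambda=\frac{\omega_1}{\omega_{(1+2p)k\eta}}\bigl|\frac{(1+2p)2\pi}{\varpi+2\pi\frac{r}{s}}\bigr|$; requiring all of these to be $\ge 1$ and taking the infimum over $p\ge 0$ is precisely the condition $|\varpi+2\pi\frac{r}{s}|\le V$.

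The horizontal side is where the real work lies and is the step I expect to be the main obstacle. Here the same symmetry reductions force $m=r-2ps$ and $l=1-2pk\eta\bmod N$ (or its negative, yielding the same orbits), so that $\sqrt\lambda=\frac{\omega_1}{\omega_{1-2pk\eta}}\bigl|\frac{\varpi+2\pi\frac{r}{s}-4p\pi}{\varpi+2\pi\frac{r}{s}}\bigr|$. Converting $\sqrt\lambda\ge 1$ into an inequality on $\varpi+2\pi\frac{r}{s}$ is not uniform: it demands the sign analysis of the six cases i)--vi), according to the signs of $p$ and of $\varpi+2\pi\frac{r}{s}$, using throughout $\omega_l\ge\omega_1$ (Lemma~\ref{evOrder}) to discard the infeasible cases, and taking care to drop the forbidden indices $1-2pk\eta\equiv 0\bmod N$ (the source of the starred infima). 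The outcome of this bookkeeping is the pair of bounds $-H_-\le\varpi+2\pi\frac{r}{s}\le H_+$.

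It then remains only to intersect the two families: $\lambda_\varpi^G=1$ holds as soon as the vertical bound $|\varpi+2\pi\frac{r}{s}|\le V$ and the horizontal bounds $-H_-\le\varpi+2\pi\frac{r}{s}\le H_+$ are simultaneously satisfied, that is when $-\inf(V,H_-)\le\varpi+2\pi\frac{r}{s}\le\inf(V,H_+)$, and Proposition~\ref{prop:criterion} then gives the theorem. The corollary is the specialization $k=n$: a short check on the reduced indices shows that in this case the horizontal bounds $H_\pm$ never drop below $V$, so that $\inf(V,H_\pm)=V$ and the admissibility condition collapses to $-V\le\varpi+2\pi\frac{r}{s}\le V$.
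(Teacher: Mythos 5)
Your overall strategy coincides with the paper's: the corollary is obtained from Theorem~\ref{theorem:absolute} (itself derived from Proposition~\ref{prop:criterion} via the separate vertical and horizontal admissibility analyses, exactly as you describe) by specializing to $k=n$ and showing that $\inf(V,H_\pm)=V$. The one point where your write-up falls short is that this last fact --- ``a short check on the reduced indices shows that the horizontal bounds $H_\pm$ never drop below $V$'' --- is precisely the entire content of the corollary beyond the theorem, and you assert it without performing the check. The paper does it as follows: since $\omega_n$ is the largest vertical frequency (Lemma~\ref{evOrder} together with $\omega_l=\omega_{N-l}$), the infimum defining $V$ is attained at $p=0$, so $V=2\pi\omega_1/\omega_n$; then each competitor in $H_\pm$, say $4p\pi\omega_1/(\omega_1+\omega_{1\mp 2pn\eta})$ or $4p\pi\omega_1/(\omega_{1\pm 2pn\eta}-\omega_1)$ with $p\ge 1$, is $\ge 2\pi\omega_1/\omega_n$ because every required inequality reduces to $2p\,\omega_n\ge\omega_{1\pm 2pn\eta}+\omega_1$, which holds since both terms on the right are $\le\omega_n$ and $p\ge1$ (and $4\pi\ge 2\pi\omega_1/\omega_n$ trivially). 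This is a two-line computation, but it is the proof of the corollary; without it your argument only restates what must be shown.
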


\begin{proof} As $\omega_1\le\omega_n$, it is enough to prove that
$$\frac{2p}{\omega_{1\pm 2pn}\pm \omega_1}\ge
\frac{1}{\omega_n}\quad\hbox{and}\quad
\frac{2p}{\omega_{1\mp 2pn}\pm \omega_1}\ge \frac{1}{\omega_n},$$
in all the cases where no $\omega_0$ is implied.
But all these inequalities are implied by
$2\omega_n\ge\omega_{1\pm 2pn}+\omega_1$.
\end{proof}
\smallskip

In the following sections, where the numerical side plays an important
role, we first study the two families associated with the highest
frequency $\omega_n$: the Eight families when $N$ is odd and the
Hip-Hop families when $N$ is even.  The first ones generalize
Marchal's $P_{12}$-family (\cite{CM,Ma2,CFM,CF1,S}) which links the
Lagrange equilateral relative equilibrium to the Eight, the second
ones generalize the family which links the relative equilibrium of the
square to the original Hip-Hop (\cite{CV,TV}).  These are precisely
the families for which global minimization of the action under the
$G_{r/s}(N,k,\eta)$-symmetry constraint may be used for global
continuation.  But even then, no unicity of minimizers is proved;
hence the continuity of the families so obtained is in question, even
if global topological continuation results can be used.

We then study the families associated with the lowest frequency
$\omega_1$, which lead to Sim{\'o}'s chains with the maximal number of
lobes ($N-1$ lobes for $N$ bodies): only numerical results are given
here because these families are only local minimizers of the action.
This is related to the fact that, starting with $N=4$, there exist
isomorphisms between the actions of some of the groups
$G_{r/s}(N,k,\eta)$ (see section~\ref{sec:isomorphisms}). We also give
examples of chains with a non maximal number of lobes, where similar
phenomena do occur.

\paragraph{Remark} It follows from corollary~\ref{cor:dense} that
absolute choreographies are dense in the Lyapunov families as soon as
torsion is present (i.e. as soon as $\varpi$ does vary along the
family). This results from the fact that a path wich is
$G_{r/s}(N,k,\eta)$-symmetric in a frame rotating with frequency
$\varpi$ is $G_{r/s+\varpi}(N,k,\eta)$-symmetric in the inertial
frame. (In the action of the group, this corresponds to the
replacement of $\alpha$ by $\alpha + \varpi \theta$.)

\subsection{Families associated with the highest frequency $\omega_n$}

They satisfy $V=2\pi\frac{\omega_1}{\omega_n}$ and it follows that the
relative equilibrium remains the unique global minimizer as long as
$$-2\pi\frac{\omega_1}{\omega_n}\le \varpi+2\pi \le
2\pi\frac{\omega_1}{\omega_n},$$ i.e. until one reaches the vertical
bifurcation of the corresponding family.

\subsubsection{The Eight families: $G=G_2(2n+1,n,-1)$}

\begin{figure}[h]
     \centering
     \includegraphics[width=11cm]{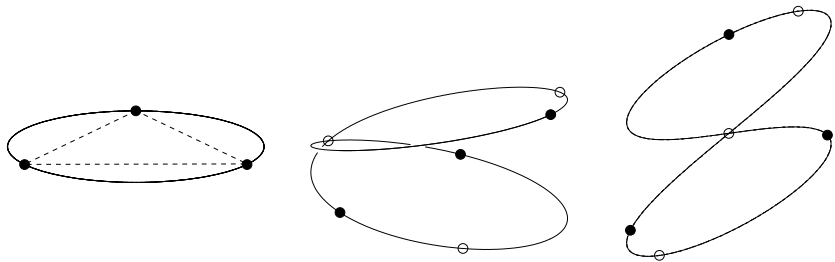}
     \caption{The $P_{12}$ family. Filled and hollow circles represent
       bodies at times $t=0$ and $t=T/12$}
     \label{fig:p12}
\end{figure}

When $n=1$ one gets the original $P_{12}$-family of Christian Marchal
(see figure~\ref{fig:p12}). In this case, the minimization property
of the relative equilibrium family was already proved in \cite{BT}
under the weaker assumption of the choreography symmetry.  Note that
for planar solutions, $2\pi$ is replaced by $4\pi$. We leave to the
reader the pleasure to check that this was a priori obvious.

When $n\ge 2$, a proof of the existence of a $G_2(2n+1,n,-1) =
D_{2n}$-symmetric Eight is still missing. The existence of a
$D_n$-symmetric Eight is proved in~\cite{FT} but, while highly
probable, the fact that it is automatically $D_{2n}$-symmetric is not
proved. Figures~\ref{fig:famHuit5c} and~\ref{fig:action5bEightChain}
depict the case $n=2$.

\subsubsection{The Hip-Hop families: $G=G_1(2n,n,\pm 1)$ }

The original Hip-Hop (figures~\ref{fig:hh} and~\ref{fig:actionFamHH})
corresponds to the case $n=2$.  For all values of $n$, the existence
of the Hip-Hop family is proved in \cite{TV}, with the usual proviso
that no uniqueness, hence no continuity, is proved.  It seems very
likely that the natural end of the family is a pair of simultaneous
$n$-tuple collisions for $\varpi=\omega_2$, but this not proved
either.  The heuristic explanation is that the phenomenon is the same
as for a fixed center pb with angle $\alpha=2\pi$: in the inertial
frame, each body must turn by exactly $2\pi$ during time $T/2$).
Moreover, all along the family, the minimizing solutions should
possess the brake symmetry.

\begin{figure}[h]
     \centering
     \includegraphics[width=10cm]{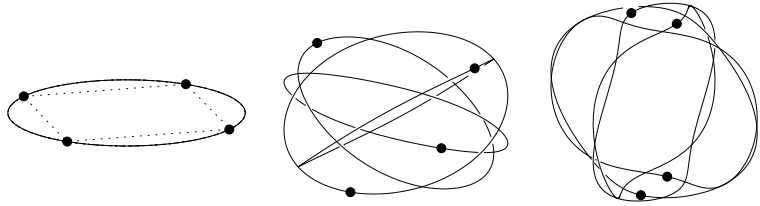}
     \caption{The original Hip-Hop family (Hip-Hop in the middle)}
     \label{fig:hh}
\end{figure}

\paragraph{Remark} Illustrating the remark at the end
of~\ref{sec:NGon}, figure~\ref{fig:actionFamHH} shows the two simplest
spatial choreographies (in the inertial frame) in the Hip-Hop family
for $n=2$. First described by S. Terracini and A.  Venturelli, they
correspond to the symmetry groups $G_\frac{3}{2}(4,2,\pm 1)$ and
$G_\frac{5}{6}(4,2,\pm 1)$. On the other hand, figure~\ref{fig:halo}
shows the original Hip-Hop in a frame rotating with its frequency
$\omega_2$.

\begin{figure}[halo]
  \centering
  \includegraphics[width=6cm]{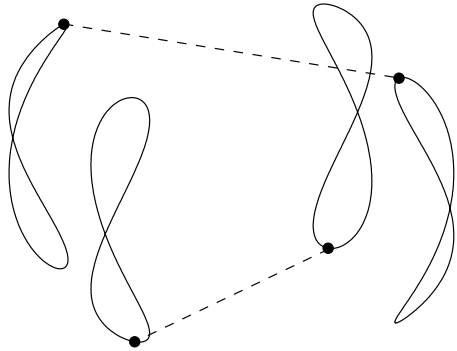}
  \caption{The Hip-Hop in a frame rotating with its very frequency}
  \label{fig:halo}
\end{figure}

\subsection{Chains families:  the role of angular momentum}
\label{sec:C}

The chains are planar choreographies of the equal-mass $N$-body
problem, similar to the Eight but with a number $\ell \geq 2$ of
lobes. The first example with $\ell > 2$ is Gerver's chain with 3
lobes and 4 bodies~\cite{CGMS}. Because of their symmetries, chains
with an even number $\ell$ of lobes have a vanishing angular momentum
while this is not the case when $\ell$ is odd. This fundamental
difference accounts for a totally different behaviour of the
corresponding Lyapunov families. We have studied numerically the cases
of 4 and 5 bodies and have observed:

-- when $\ell$ is even, a complete unfolding of the corresponding
unchained polygon into a vertical chain;

-- when $\ell$ is odd, a complete unfolding into a horizontal chain
(in a frame which is still rotating), followed by a plane family which
continues to a horizontal chain. 

Due to the existence of isomorphisms of the actions of different
symmetry groups $G_{r/s} (n,k,\eta)$, global minimization of the
action may succeed only for the Eights ($\ell=2$).


\subsubsection{Maximal chain families: $G=G_{N-1}(N,1,-1)$, $N=2n+1$} 
\label{sec:maximalChain}

When observed in a frame which rotates $N-1$ times per period in the
negative direction, the non-trivial solutions of the (VVE)
corresponding to the frequency $\omega_1$ give rise, to an
infinitesimal choreography which unfolds the $N-1$ circles.  When
$N=2n+1$ is odd, the full family hopefully continues up to the fixed
frame into a vertical zero angular momentum planar chain with $N-1$
lobes.  We recall that it was proved in section~\ref{sec:isomorphisms}
that the action of $G_{N-1}(N,1,-1)$ is, for $N=2n+1$, isomorphic to
the one of the Eight with $N$ bodies $G_2(N,n,-1)$.

We study now the simplest case where the maximal chain family and the
Eight family differ.

\subsubsection{The 4-lobe chain and the Eight for 5 bodies: two
  isomorphic actions of the symmetry groups}

\paragraph{Four-lobe chain: $G=G_4(5,1,-1)$}

When $N=5$, we have checked numerically that, indeed, the $G_4(5,1-1)$
family continues up to a vertical planar chain with four lobes.
During the unfolding, the two central lobes become smaller and flatten
more rapidly while the the two exterior lobes remain during a long
time almost vertical before flattening down as the Big Ears of Big
Brother.

We have
$$V=2\pi,\quad H_+=4\pi\frac{\omega_1}{\omega_1+\omega_2},\;
H_-=\inf \left(2\pi,4\pi\frac{\omega_1}{\omega_2-\omega_1} \right)
=2\pi.$$ Hence, on the left hand side ($\varpi+8\pi<0$), the estimate
goes all the way till the bifurcation of the chain family but on the
right hand side, it stops before:
$-2\pi\le\varpi+8\pi\le4\pi\frac{\omega_1}{\omega_1+\omega_2}$.  This
is due to the existence of the Eight family (see
figure~\ref{fig:famHuit5c}) which, as we have recalled, has the same
symmetries up to reordering (see figure~\ref{fig:action5bEightChain}).

\begin{figure}[h]
     \centering
     \includegraphics[width=11cm]{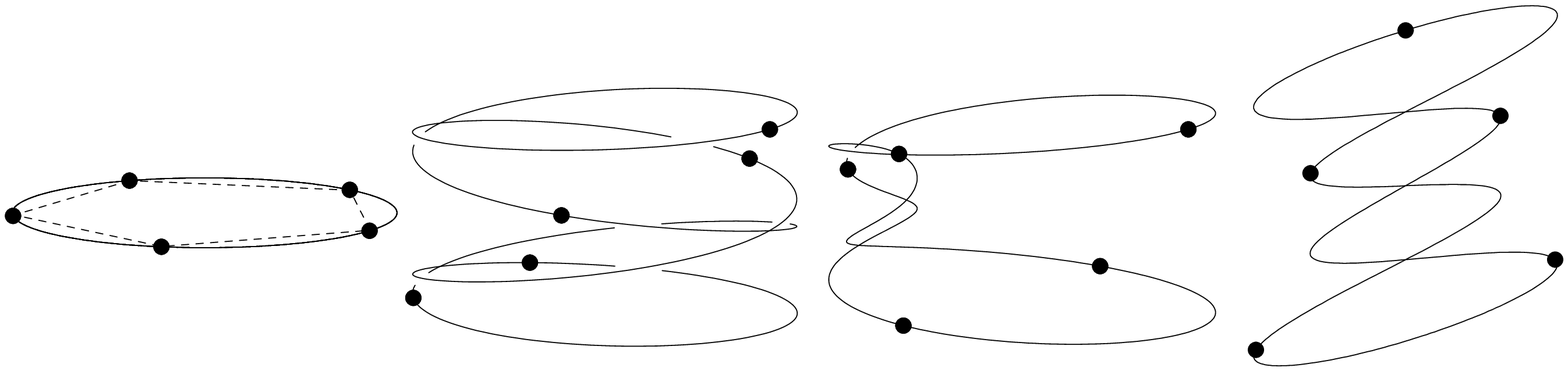}
     \caption{The family of the 5-body chain with 4 loops}
     \label{fig:famChaine5c4b}
\end{figure}

\paragraph{Five-body Eight: $G=G_2(5,2,-1)$}

The solutions of the (VVE) coresponding to the 2 families with
frequency $\omega_2$ (recall that they differ only by the replacement
of $\zeta^k$ by $\bar\zeta^k$) give rise to chains with respectively 3
lobes and 2 lobes, the order of the bodies in the choreography being
respectively the one of the retrograde and the direct stellated
pentagon.  We are interested now in the second one.

We have
$$V=2\pi\frac{\omega_1}{\omega_2},\quad
H_+=\inf(4\pi,8\pi\frac{\omega_1}{\omega_2-\omega_1},
12\pi\frac{\omega_1}{\omega_1+\omega_2})=4\pi,\;
H_-=8\pi\frac{\omega_1}{\omega_1+\omega_2} \cdot$$

It follows that $\lambda_\varpi^G=1$ as long as
$-2\pi\frac{\omega_1}{\omega_2}\le \varpi+4\pi\le
2\pi\frac{\omega_1}{\omega_2}$, i.e. until we reach the vertical
bifurcation of the Eight family (see
figure~\ref{fig:action5bEightChain}).

\begin{figure}[h]
     \centering
     \includegraphics[width=11cm]{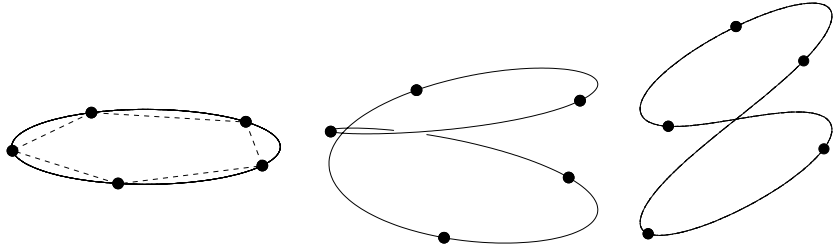}
     \caption{The family of the 5-body Eight.}
     \label{fig:famHuit5c}
\end{figure}

\noindent {\bf Remarks.} 1) The case of $G=G_{4}(5,1,-1)$ is the first
one where $H_+<V$.  This is because the second frequency $\omega_2$,
while ruled out from the $G$-invariant vertical solutions, is present
into the $G$-invariant horizontal solutions.

2) The estimate $H_+$ yields a lower bound of the action of the member
of the Lyapunov family which bifurcates at $\varpi = (-2\omega_2 -
\omega_1) \frac{\omega_1}{\omega_2}$. In particular, at this point,
the action is higher than it is at the bifurcation point, because,
$\frac{\omega_1}{\omega_2}<\frac{2\omega_1}{\omega_1+\omega_2}$.

\subsubsection{The 3-lobe chains for 4 or 5 bodies}

\paragraph{Four bodies} 

When $N=4$, the non-trivial solutions of the (VVE) corresponding to
the frequency $\omega_1$, more precisely to the group $G_3(4,1,\pm
1)$, give rise to 3 lobes chains when one starts with a frame which
rotates two full turns in the negative direction.  When the rotation
decreases, the chain starts opening but the central lobe decreases and
when it reaches approximately $-0.9$ turns by period, an almost
collision occurs. After that, the solution flattens to the horizontal
plane (with $\varpi$ not yet vanishing) where a second bifurcation
leads to the Gerver solution (figures~\ref{fig:famGerver}
and~\ref{fig:action4cChaine3b}).

\begin{figure}[h]
     \centering
     \includegraphics[width=11cm]{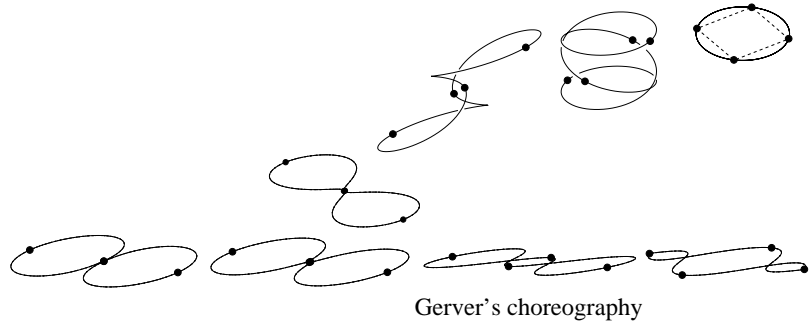}
     \caption{The planar Gerver family as a secondary bifurcation}
     \label{fig:famGerver}
\end{figure}

\paragraph{Five bodies} 

An analogous scenario is observed for $N=5$ for the frequency
$\omega_2$, more precisely with the group $G_3(5,2,1)$. but in this
case, the family never gets close to collision
(figures~\ref{fig:fam5cChaine3b} and~\ref{fig:action5cChaine3b}) and
the central lobe is bigger than the two extreme ones.

\begin{figure}[h]
     \centering
     \includegraphics[width=11cm]{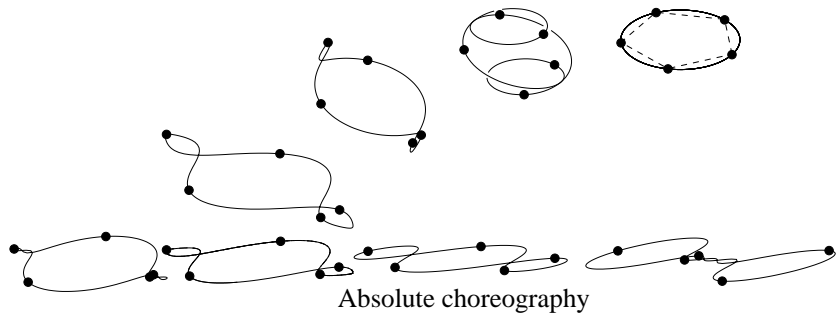}
     \caption{The $5$-body $3$-lobe chain family and its planar secondary bifurcation}
     \label{fig:fam5cChaine3b}
\end{figure}

\clearpage
\subsection{Action diagrams} 

Figures~\ref{fig:actionP12}--\ref{fig:action5cChaine3b} display
the action of families of solutions as a function of the frequency
$\varpi$ of the rotation of the frame. Each figure corresponds to a
fixed symmetry in the rotating frame (recall that the isomorphisms
between group actions are associated with relabelling of the bodies as
described in theorem~\ref{theorem:absolute}):

\begin{tabular}[t]{llll}
  Figure~\ref{fig:actionP12} &$G_2(3,1,-1)$ &$\equiv$ &$G_{-2}(3,1,1)$\\
  Figure~\ref{fig:actionFamHH} &$G_1(4,2,\pm 1)$\\
  Figure~\ref{fig:action4cChaine3b} &$G_3(4,1,-1)$ &$\equiv$ &$G_1(4,1,1)$\\
  Figure~\ref{fig:action5bEightChain} &$G_4(5,1,-1)$ &$\equiv$
  &$G_2(5,2,-1)$\\ 
  Figure~\ref{fig:action5cChaine3b} &$G_3(5,2,1)$ &$\equiv$ &$G_1(5,1,1)$
\end{tabular}

The solutions are represented in perspective in $\R^3$. This holds in
particular for solutions lying in the horizontal plane (relative
equilibria and horizontal secondary families in
figures~\ref{fig:action4cChaine3b} and~\ref{fig:action5cChaine3b}) or
in a vertical plane (Eights or chains in figures~\ref{fig:actionP12}
and~\ref{fig:action5bEightChain}). 

\paragraph{Display of the implications of
  theorem~\ref{theorem:absolute}}
Fat segments on the $\varpi$-axis and the corresponding fat part on
the graph of the action indicate intervals on which the relative
equilibrium of the regular $N$-gon is the unique absolute minimizer
for the given symmetry in the rotating frame. It appears that the
theorem detects global phenomena i.e., the presence of different
branches of Lyapunov families. 

\paragraph{Symmetry with respect to $\varpi=0$ of figure~\ref{fig:actionP12}}
This symmetry corresponds to the isomorphism $G_2(3,1,-1) \equiv
G_{-2}(3,1,1)$. Analogous symmetries could have been shown on the
other figures. 

On the $\varpi<0$-half (resp. $\varpi >0$-half) of the figure, the
configuration of the relative equilibrium is a positively (resp.
negatively) oriented equilateral triangle. In the rotating frame, it
makes two turns in the positive (resp. negative) direction. Notice
that in the inertial frame the four possible combinations of
orientation of the configuration and direction of motion occur at the
four bifurcation points $-3\omega_1$, $-\omega_1$, $\omega_1$ and
$3\omega_1$.

Also, in figure~\ref{fig:actionP12}, the continuity of the Lyapunov
family implies a change of sign of the $x$-coordinate of the body 0
when $\varpi$ goes through 0. This is consistent with the
$G_r(N,k,\eta)$-symmetry: if a loop is $G_r(N,k,\eta)$-symmetric, the
same is true for its image under the rotation of angle $\pi$ around
the vertical axis. 

\paragraph{Cases without torsion} The $G_{N-1}(N,1,-1)$-Lyapunov
family starting at $\varpi = -N \omega_1$ (resp. $+N\omega_1$) from
the relative equilibrium which completes minus (resp. plus) one turn
per period in the inertial frame has no torsion: indeed, it is merely
the family obtained by rotating the horizontal relative equilibrium
around the $y$-axis. As this family has constant action, this fact
accounts for the end of the action diagrams in
figures~\ref{fig:actionP12}, \ref{fig:action4cChaine3b} and
\ref{fig:action5bEightChain}.

\paragraph{About figure~\ref{fig:action5bEightChain}} The family of
solutions which in figure~\ref{fig:action5bEightChain} bifurcates at
$\varpi= (-2\omega_2-\omega_1)\frac{\omega_1}{\omega_2}$ is, up to
scaling, symmetric of that which, in
figure~\ref{fig:action5cChaine3b}, bifurcates at
$\varpi=\omega_1-3\omega_1$. More precisely, the latter family
transforms into the former one by:

1) reflection about $\varpi=0$, which corresponds to the isomorphism
$G_3(5,2,1) \equiv G_{-3}(5,2,-1)$;

2) translation of $-5\omega_2$ along the $\varpi$-axis, which
transforms the $G_{-3}(5,2,-1)$-symmetry into the symmetry group
$G_2(5,2,-1)$ of figure~\ref{fig:action5bEightChain} (see the remark
at the end of~\ref{sec:NGon});

3) scaling by $\omega_1/\omega_2$.

In the rotating frame, the family starts as the $P_{12}$-family but,
at some point, two more loops develop on the supporting curve which
then flattens to a horizontal planar (relative) choreography with two
loops, described twice per period. The figure below shows this
(quasiperiodic) solution in the inertial frame.

\vfill 

\begin{figure}[h]
     \centering
     \includegraphics[height=9cm]{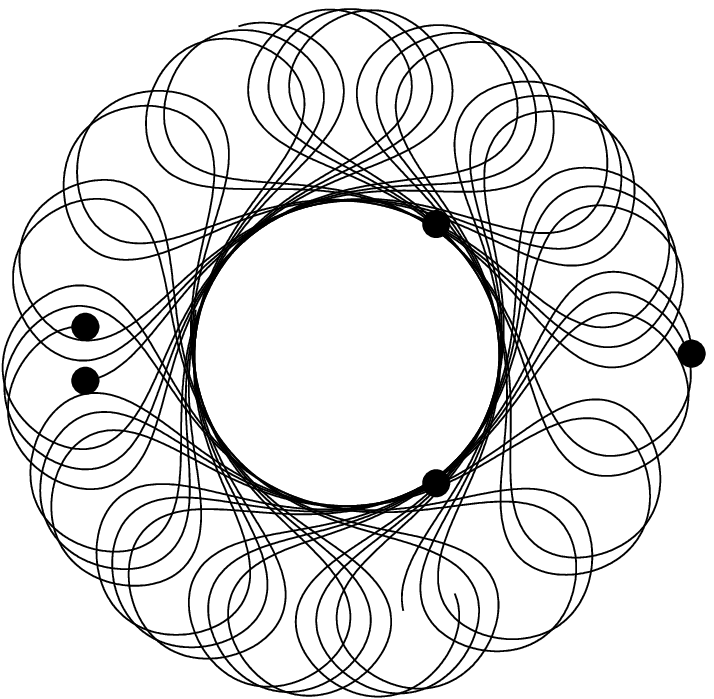}
     \label{fig:assiette}
\end{figure}

\vspace{1cm}

\begin{figure}[h]
     \centering
     \includegraphics[angle=90,height=17cm]{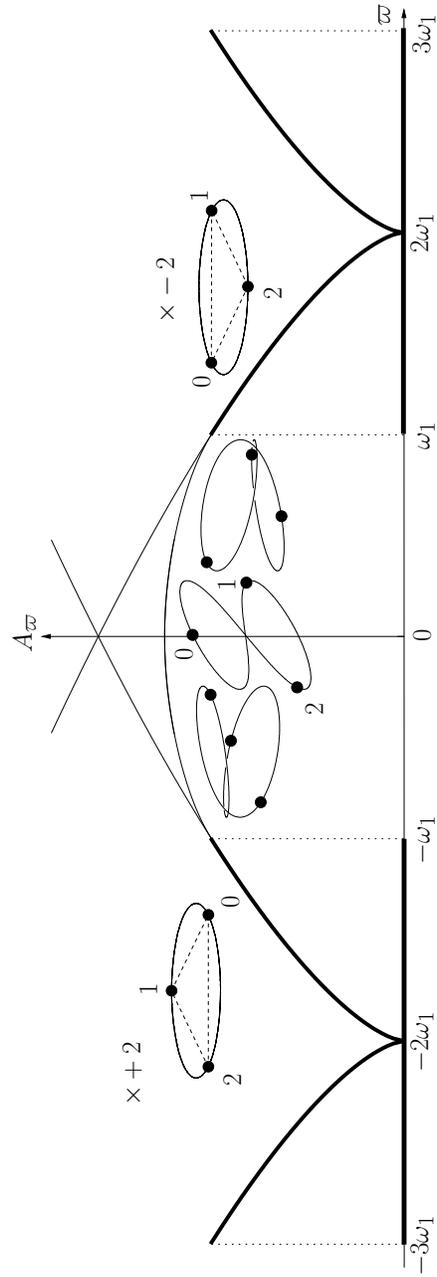}
     \caption{The action of the $P_{12}$ family and of two times
       Lagrange solution in the rotating frame}
     \label{fig:actionP12}
\end{figure}

\begin{figure}[h]
     \centering
     \includegraphics[angle=90,height=17cm]{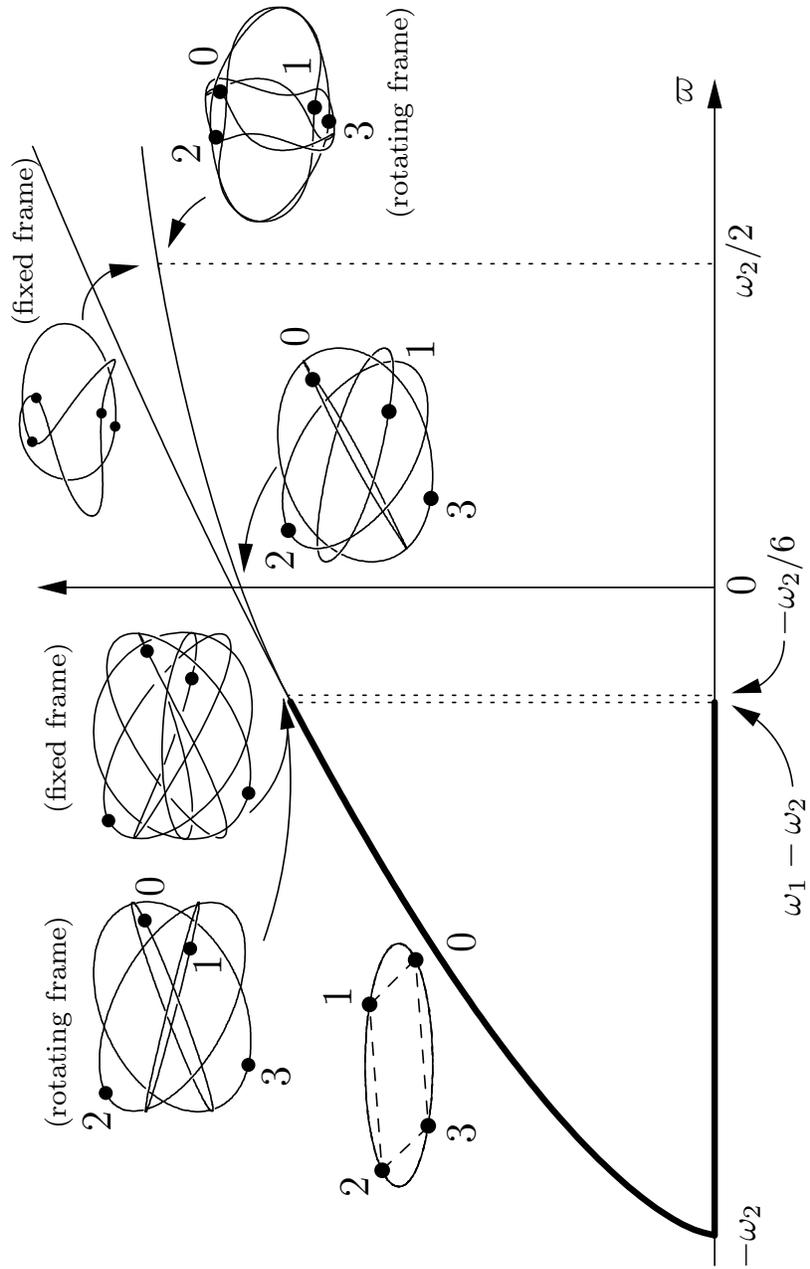}
     \caption{The action of the original Hip-Hop family (including two
       absolute choreographies discovered in~\cite{TV})}
     \label{fig:actionFamHH}
\end{figure}

\begin{figure}[h]
   \centering
   \includegraphics[angle=90,height=18cm]{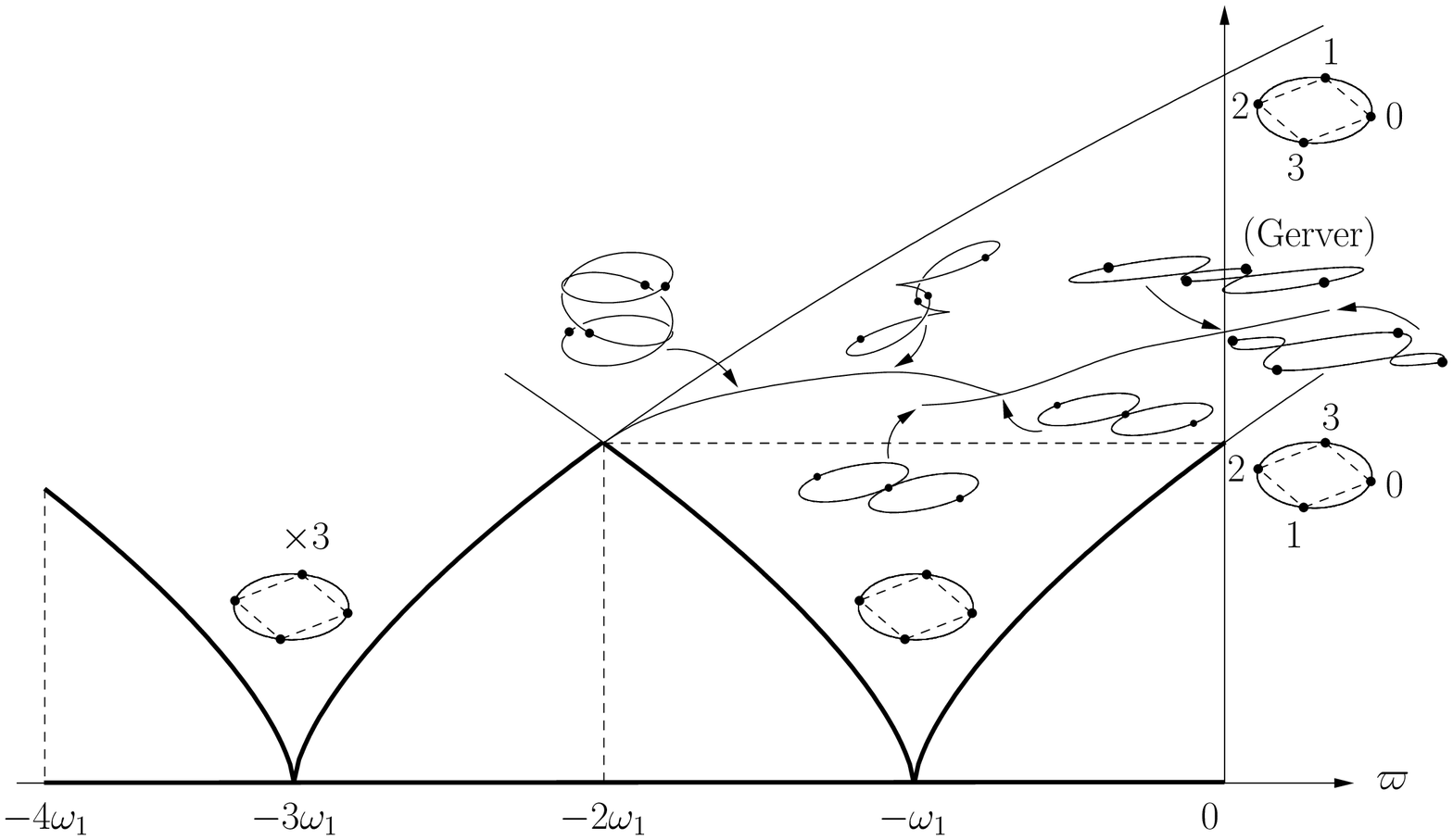}
   \caption{Action of the 4-body, 3-loop chain family, and of the
     planar Gerver family}
    \label{fig:action4cChaine3b}
\end{figure}

\begin{figure}[h]
     \centering
     \includegraphics[angle=90,height=17cm]{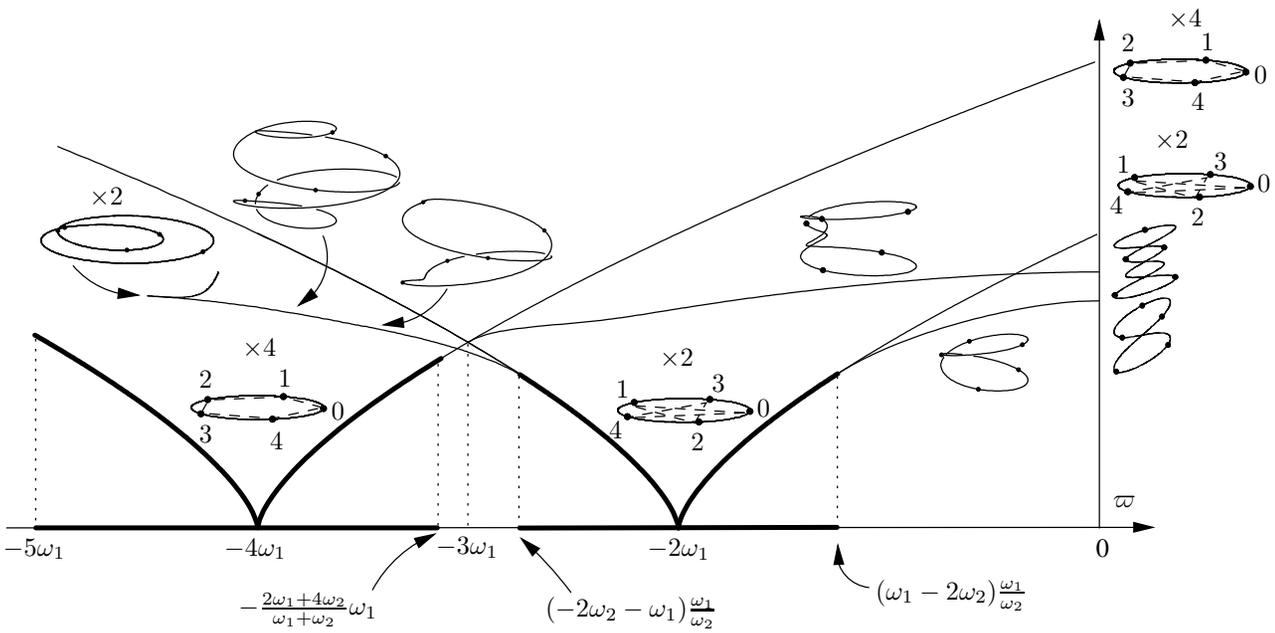}
     \caption{Action of the 5-body Eight and 4-loop chain
       families} 
     \label{fig:action5bEightChain}
\end{figure}

\begin{figure}[h]
   \centering
   \includegraphics[angle=90,height=18cm]{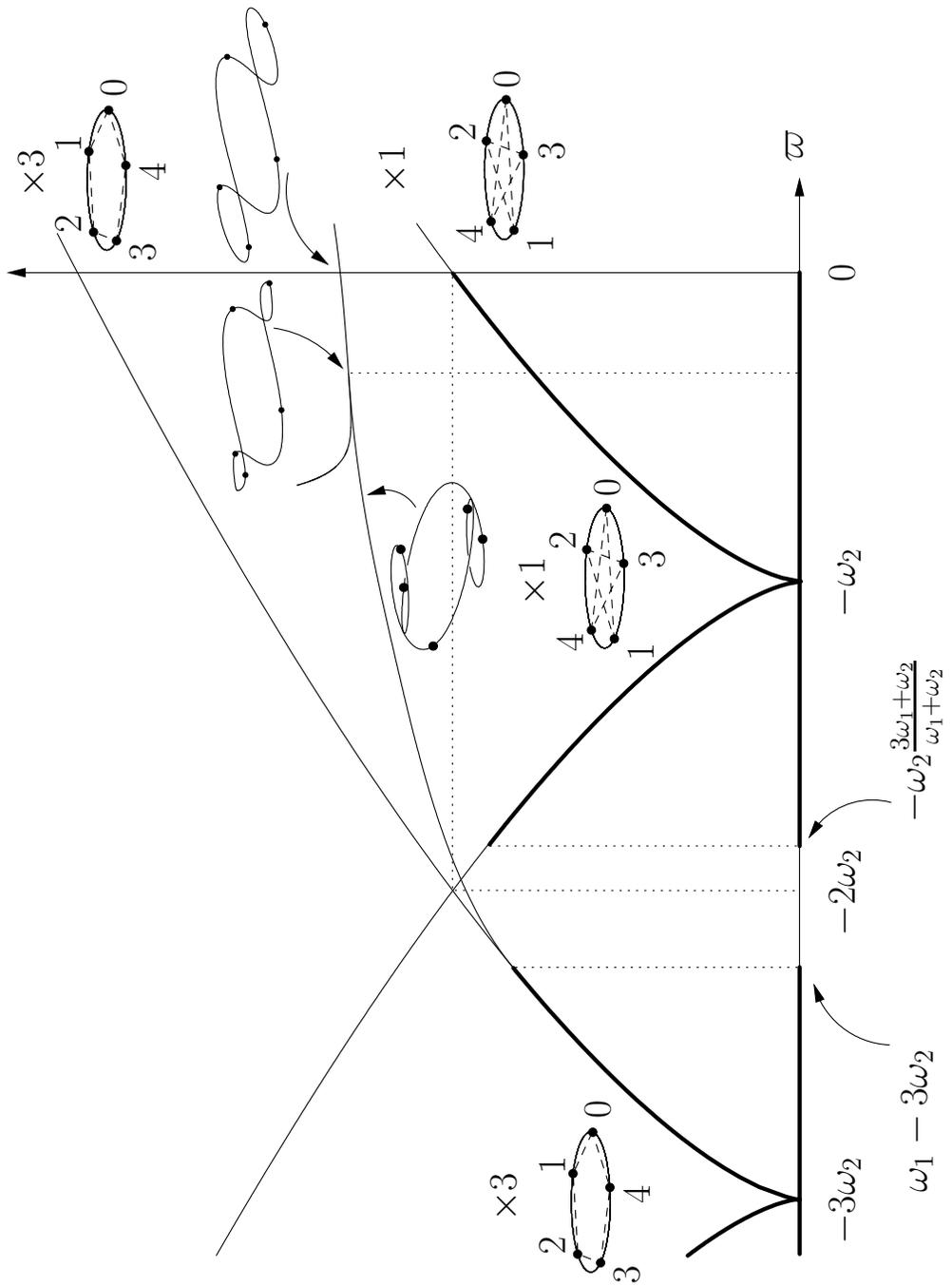}
   \caption{Action of the 5-body, 3-loop chain family, and of the
     corresponding planar family (cf.~\cite{S})}
   \label{fig:action5cChaine3b}
\end{figure}

\clearpage
\section{Appendix: Fourier expansions and the torsion}

Approximate Fourier expansions of $G_{r/s}(N,k,\eta)$-symmetric
Lyapunov families, in the same spirit as Marchal's computations in
\cite{Ma1,Ma2}, allow to evaluate the torsion under a regularity
hypothesis.

\subsection{The symmetry ansatz}

We are looking for local one-parameter families of solutions of the
$N$-body problem which, in a family of frames rotating with frequency
$$\varpi=\omega_1-\frac{r}{s}\omega_k+\tilde\varpi,$$ 
are periodic of period $T=s\frac{2\pi}{\omega_k}$.
{\it We will suppose that $\omega_k=2\pi$, and hence $T=s$.}

\medskip Such solutions are of the form
$$x_j(t)=(h_j(t),z_j(t)),\;
h_j(t)=e^{i(\omega_1-\frac{r}{s}\omega_k+\tilde\varpi)t} \tilde
h_j(t), \quad j=0,\cdots, N-1,$$ with
$$\tilde h_j(t)=\sum_{l=-\infty}^{+\infty}a_l^je^{i2\pi\frac{l}{s}t},\; 
z_j(t)=\Re\bigl(\sum_{l=-\infty}^{+\infty}b_l^je^{i2\pi\frac{l}{s}t}\bigr).$$
Moreover, we ask the solutions in the rotating frame $\tilde
x_j(t)=(\tilde h_j(t),z_j(t))$ to be symmetric under the action of
$G_{r/s}(N,k,\eta)$ described in section 3. Recall that an
$s$-periodic loop of configurations
$x(t)=\bigl(x_1(t),\cdots,x_N(t)\bigr)$ is invariant under the action
of $G_{r/s}(N,k,\eta)$ if and only if, for every
$(\theta,\delta,\beta,\xi)\in G_2$ representing an element of
$G_{r/s}(N,k,\eta)$, i.e. such that
$\theta-\frac{\beta}{2}-k\eta\frac{\delta}{N}=m\in\Z$, one has
$$\forall j\in \Z/{N\Z},\; x_j(t)=\rho
x_{\xi(j+\delta)}\bigl(\xi(t-\theta)\bigr), $$
where the action of $\rho$ on $\R^3=\C\times \R$ is defined by
$$\rho(h,z)=(e^{i2\pi\alpha} \bar h^\xi,e^{i\pi\beta} z) \quad
\mbox{with} \quad \alpha=\frac{r}{s}\theta-\frac{\delta}{N}\pmod{1},$$
where $\bar h^\xi =h$ if $\xi=+1$ and $\bar h^\xi = \bar h$ if $\xi =
-1$. 

\smallskip Translated in terms of Fourier coefficients, this reads:
$$a_l^j=e^{i2\pi(\alpha-\frac{l}{s}\theta)}
\overline{a_l^{\xi(j+\delta)}}^\xi,$$ 
for the horizontal components and
$$\Re (b_0^j-e^{i\pi\beta\xi} \overline{b_0^{\xi(j+\delta)}}^\xi)=0
\quad \hbox{and}\quad
b_l^j=e^{i(\pi\beta\xi-2\pi\frac{l}s{\theta})}\overline{b_l^{\xi(j+\delta)}}^\xi
\quad\hbox{if}\quad l\not=0$$ 
for the vertical components, implying restrictions on the coefficients
$a_l^j$ and $b_l^j$: 

\paragraph{Horizontal coefficients} We have
$$\alpha-\frac{l}{s}\theta = \frac{r-l}{s}\theta-\frac{\delta}{N} =
\frac{r-l}{s}(\frac{\beta}{2}+m) + \left(\frac{r-l}{s}k\eta-1\right)
\frac{\delta}{N}\cdot$$  
Fixing $\xi$ and $\delta$ and changing $\beta$, the angle
$e^{i2\pi(\alpha-\frac{l}{s}\theta)}$ takes at least two different
values as soon as $\frac{r-l}{s}$ is not even. Hence
$$a_l^j=0\quad\hbox{if}\quad \frac{r-l}{s}\quad\hbox{is not an even
  integer}.$$ 
 
If $\frac{r-l}{s}=2p$ is even, the symmetry conditions become
$$a_l^j=e^{i2\pi(2pk\eta-1)\frac{\delta}{N}}\overline{a_l^{\xi(j+\delta)}}^\xi\cdot$$
Applying these identities with both values $\xi=1$ and $\xi=-1$, we
get by difference
$$\forall p,j, a_{r-2ps}^{j}=\overline{a_{r-2ps}^{-j}}, \quad\hbox{hence}\quad 
\forall p,\; a_{r-2ps}^0=\overline{a_{r-2ps}^0}\cdot$$
So, the coefficients $a_{r-2ps}^0$ are real and
$$\forall p,j,\; a_{r-2ps}^j=e^{-i2\pi(2pk\eta-1)\frac{j}{N}}a_{r-2ps}^0\cdot$$
Finally, as the center of mass is at the origin, we have that for all
$l$, $\sum_{j=0}^{N-1}a_l^j=0$, hence 
$$\left(\sum_{j=0}^{N-1}e^{-i2\pi(2pk\eta-1)\frac{j}{N}}\right)a_{r-2ps}^0=0,$$
which implies
$$a_{r-2ps}^j=0\quad\hbox{for all}\quad p\quad\hbox{such that}
\quad2pk\eta-1=0\pmod N.$$

\medskip To summarize,
$$a_l^0=0\quad\hbox{unless possibly for}\quad l=r-2ps \quad\hbox{with}\quad 2pk\eta-1\not=0\pmod N$$
$$\forall p,j,\; a_{r-2ps}^j=e^{-i2\pi(2pk\eta-1)\frac{j}{N}}a_{r-2ps}^0\cdot$$

\paragraph{Vertical coefficients} Giving successively its two possible
values 0 and 1 to $\beta$, we get that
$$\forall j,\;\; \Re b_0^j=0.$$ Moreover, 
$$\frac{\beta}{2}\xi-\frac{l}{s}\theta =  
\frac{\beta}{2}(\xi-\frac{l}{s})-m\frac{l}{s}-k\eta\frac{l}{s}\frac{\delta}{N}$$
takes two different values for $\beta=0$ and $\beta=1$ as long as
$\xi-\frac{l}{s}$ is not an even integer, which implies that, for
$l\not=0$, $b_l^j=0$ can be different from 0 only if $l$ is of the
form $l=(2p+1)s$, with $p$ an integer.

Now, if $l\not=0$, choosing $\delta=-j$ and giving its two possible
values $\pm 1$ to $\xi$ and noticing that $\beta\xi=\beta\pmod 2$, we
get
$$b_l^j=e^{i2\pi(\frac{\beta}{2}(1-\frac{l}{s})-m\frac{l}{s}+k\eta\frac{l}{s}\frac{j}{N})}b_l^0
=e^{i2\pi(\frac{\beta}{2}(1-\frac{l}{s})-m\frac{l}{s}+k\eta\frac{l}{s}\frac{j}{N})}
\overline{b_l^0},$$
hence $b_l^0\in\R$. Finally, as the center of mass is at the origin,
$\forall l,\;\; \sum_{j=0}^{N-1}b_l^j=0$,
that is
$$e^{i2\pi(\frac{\beta}{2}(1-\frac{l}{s})-m\frac{l}{s})}
\left(\sum_{j=0}^{N-1}e^{i2\pi
    k\eta\frac{l}{s}\frac{j}{N}}\right)b_l^0=0.$$ 
Finally, $b_l^0$ (and hence all $b_l^j$) must vanish unless $k\eta l\not=0\pmod {sN}$.

\medskip To summarize,
$$b_l^0=0\quad\hbox{unless possibly for}\quad l = (2q+1)s
\quad\hbox{with}\quad (2q+1)k\eta\not=0\pmod N,$$ 
$$\forall q,j,\;\; b_{(2q+1)s}^j=e^{i2\pi k\eta(2q+1)\frac{j}{N}}b_{(2q+1)s}^0\cdot$$

In particular,
$$z_j(t)=\sum_{q\ge 0}c_{2q+1}\cos 2\pi(2q+1)(t+k\eta\frac{j}{N}),$$
where we have used the notation
$$c_{2q+1}=b_{(2q+1)s}^0+b_{-(2q+1)s}^0,\quad q=0,1,\cdots$$

\medskip For example, these conditions for the $P_{12}$ family (that
is for the group $G_{2}(3,1,-1)$) become
$$a_l^0\in\R,\quad a_l^j=0\quad\hbox{if either}\quad l=1\pmod 2\quad\hbox{or}\quad l=0\pmod 3,$$
$$b_l^0\in\R,\quad b_l^j=0\quad\hbox{if either}\quad l=0\pmod 2\quad\hbox{or}\quad l=0\pmod 3,$$ 
which coincide with the conditions found by Marchal in \cite{Ma1}. 
 
\subsection{The regularity ansatz}

In the rotating frame, the solutions we are interested in are the
ones tangent to the cylinder of solutions of (VVE)
$$(A_0\zeta^je^{i2\pi\frac{r}{s} t}, \epsilon\Re{\zeta^{k\eta
    j}e^{i2\pi t}}),\; j=0,\cdots,N-1,$$ 
where as usual $\zeta=e^{i\frac{2\pi}{N}}$.   

\medskip In the inertial frame, these solutions are of the form
$$x_j(t)=(h_j(t),z_j(t))\qquad j=0,\cdots, N-1,$$
$$h_j(t)=e^{i(\omega_1+\tilde\varpi)t}\sum_{p}a_{r-2ps}^0
e^{i2\pi[-2pt-(2pk\eta-1)\frac{j}{N}]},$$ 
$$z_j(t)=\sum_{q\ge 0}c_{2q+1}\cos 2\pi(2q+1)(t+k\eta\frac{j}{N}),$$
where the coefficients $a_l^0(\epsilon)$ and $c_l(\epsilon)$ are real
and the summations are respectively over all integers $p\in\Z$ and
$q\in\Z_+$ such that $$2pk\eta-1\not=0\pmod N\quad\hbox{and}\quad
(2q+1)k\eta\not=0\pmod N.$$ 

We will assume that, in the reduced phase space where the relative
equilibrium becomes an equilibrium, the one-parameter family of
$s$-periodic solutions we are looking at generates an analytic
foliation of an analytic 2-dimensional surface, more precisely, that
it is the image under an analytic embedding of the trivial planar
family $u(t)=\epsilon\cos 2\pi t/s,\, v(t)=\epsilon\sin 2\pi t/s$.  As
$(\cos^k 2\pi t/s)(\sin^l 2\pi t/s)$ has no harmonics of order larger than
$k+l$, this implies in particular that the Fourier expansion of the
coefficient of $\epsilon^n$ in $x(t)$ does not contain harmonics of
order larger than $n$. This justifies taking $$\epsilon=c_1$$ as our
main local parameter and making the following ansatz:
$$a_{r-2ps}^0=A_{2p}\epsilon^{|2p|}+O(\epsilon^{|2p|+2}),\quad
c_{2q+1}=C_{2q+1}\epsilon^{2q+1}+O(\epsilon^{2q+3}),$$ which in the
case of the $P_{12}$ family coincides with the one made in~\cite{Ma1}.

\smallskip Using both ansatz, an identification in the equations of
motion allows us to determine the leading coefficients $a_l^0$ and
$c_l$, as well as the dominant term in $\tilde\varpi$, in terms of
$\epsilon$.  The dominant terms in the components of $z_j(t)$ are
$$z_j(t)=\epsilon\cos 2\pi(t+k\eta\frac{j}{N})+ C_3\epsilon^3\cos
6\pi(t+k\eta\frac{j}{N})+O(\epsilon^5),$$ while those of $h_j(t)$ are
$$h_j=e^{i(\omega_1+\tilde\varpi)t} \sum_{p=0,\pm
  1}a_{r-2ps}^0e^{i2\pi (-2pt+\frac{j[p]}{N})}+O(\epsilon^4),$$ 
that is
$$h_j=e^{i(\omega_1+\gamma\epsilon^2)t} \left[
  \begin{array}[c]{l}
    (A_0+\alpha\epsilon^2) e^{i2\pi\frac{j}{N}}+ \\
    \left(A_{2}e^{i2\pi
        (-2t+\frac{j[1]}{N})}+A_{-2}e^{i2\pi
        (2t+\frac{j[-1]}{N})}\right)\epsilon^2
  \end{array}
\right] +O(\epsilon^4),$$
where we have used the following notations (we have directly set to
zero the coefficient of $\epsilon$ in $a_r^0$ and $\tilde\varpi$
because this is an immediate consequence of the equations in the next
section):
$$a_r^0=A_0+\alpha\epsilon^2+O(\epsilon^4),\quad
\tilde\varpi=\gamma\epsilon^2+O(\epsilon^4).$$ 

\paragraph{The Eight families $G_2(N=2n+1,n,-1,)$}
$$h_j(t)=e^{i(\omega_1+\tilde\varpi)t}
\left(a_{2}^0e^{i2\pi\frac{j}{N}}+a_{4}^0e^{i2\pi(2t+(-2n+1)\frac{j}{N})} 
+\cdots\right),$$

\paragraph{The Hip-Hop families $G_1(N=2n,n,\pm 1)$}
$$h_j(t)=e^{i(\omega_1+\tilde\varpi)t}\left(a_{1}^0e^{i2\pi\frac{j}{N}}+
a_{3}^0e^{i2\pi(2t+(2n+1)\frac{j}{N})}+a_{-1}^0e^{i2\pi(-2t-(2n-1)\frac{j}{N})}
+\cdots\right),$$

\paragraph{The maximal chain families $G_{2n}(2n+1,1,-1)$}
$$h_j(t)=e^{i(\omega_1+\tilde\varpi)t}\left(a_{2n}^0e^{i2\pi\frac{j}{N}}
  +a_{2n+2}^0e^{i2\pi(2t-\frac{j}{N})}+a_{2n-2}^0e^{i2\pi(-2t+3\frac{j}{N})}
+\cdots\right),$$
except if $N=3$ where the last term is absent.
 
\subsection{Identification of dominant coefficients }

It is remarkable how well the two ans{\"a}tze above make it possible to
identify Fourier expansions of the Lyapunov families: the regularity
ansatz makes the system of equations block-triangular, while the
symmetry ansatz makes the solution unique for a given choice of group.

\paragraph{Notations}

$$e^{i2\pi\frac{j}{N}}-e^{i2\pi\frac{l}{N}} = \vec
r_{jl}=u_{jl}+iv_{jl} = \rho_{jl}e^{i4\pi \theta_{jl}},\quad
-(2pk\eta-1)j=j[p].$$ 
$$A_{jl}=A_0\rho_{jl},\quad  B_{jl}=\frac{1}{A_{jl}}\sin^2
2\pi(k\eta\frac{j-l}{2N}), \quad \Theta_{jl;p}=\theta_{jl}-\theta_{j[p]l[p]}.$$
Finally, we recall the restrictions on the values of $p$ and $q$ which
come into the sums; in particular, $\sum_{p=\pm 1}$ will mean the sum
restricted to those values of $p=\pm 1$ such that $2pk\eta-1\not=0
\pmod{N}$. 

\paragraph{Mutual distances} With the notations above,
$$h_j-h_l=e^{i(\omega_1+\tilde\varpi)t} \sum_{p}a_{r-2ps}^0e^{-i4\pi
  pt}\vec r_{j[p]l[p]}\; ;$$ 

$$z_j-z_l=\sum_{q\ge 0}c_{2q+1}\left[\cos
  2\pi(2q+1)(t+k\eta\frac{j}{N}) -\cos 2\pi(2q+1)(t+k\eta\frac{l}{N})\right].$$
Mod $O(\epsilon^4)$ we obtain
$$||h_j-h_l||^2=(A_0\rho_{jl})^2+2A_0\left[\rho_{jl}\alpha+\sum_{p=\pm
    1} A_{2p}<\vec r_{jl},e^{-i4\pi pt}\vec r_{j[p]l[p]}>\right]\epsilon^2;$$
$$|z_j-z_l|^2=\left[\cos 2\pi(t+k\eta\frac{j}{N})- \cos
  2\pi(t+k\eta\frac{l}{N})\right]^2\epsilon^2,$$ 
that is
$$||h_j-h_l||^2=(A_0\rho_{jl})^2+2A_0\rho_{jl}\left[\alpha+\sum_{p=\pm
    1}A_{2p}\rho_{j[p]l[p]}\cos
  4\pi(pt+\theta_{jl}-\theta_{j[p]l[p]})\right]\epsilon^2\;
;$$ 
$$|z_j-z_l|^2=4\sin^2 2\pi(t+k\eta\frac{j+l}{2N})\sin^2
2\pi(k\eta\frac{j-l}{2N})\epsilon^2,$$ 
which, using the notations above, becomes
$$||h_j-h_l||^2=A_{jl}^2\left[1+\frac{2}{A_{jl}}
  \left(\alpha+\sum_{p=\pm 1}A_{2p}\rho_{j[p]l[p]}\cos
    4\pi(pt+\Theta_{jl;p})\right)\epsilon^2\right];$$ 
$$|z_j-z_l|^2=2A_{jl}B_{jl} \left(1-\cos
  4\pi(t+k\eta\frac{j+l}{2N})\right) \epsilon^2.$$

Finally, $\|x_j-x_l\|^{-3}$ equals
$$A_{jl}^{-3}\left[1-\frac{3}{A_{jl}} \left(
    \begin{array}[c]{l}
      \alpha+ \sum_{p=\pm 1}A_{2p}\rho_{j[p]l[p]}\cos
      4\pi(pt+\Theta_{jl;p})+\\
      B_{jl} \left(1-\cos 4\pi(t+k\eta\frac{j+l}{2N})\right)
    \end{array}
  \right) \epsilon^2\right]+O(\epsilon^4).$$

We will now plug in the obtained expressions into the equations of
motion
$$\ddot h_j=\sum_{l\not=j}\frac{h_l-h_j}{||x_l-x_j||^3},\quad \ddot
z_j=\sum_{l\not=j}\frac{z_l-z_j}{||x_l-x_j||^3}.$$ 

\paragraph{Horizontal equations} Recall that
$$h_j=e^{i(\omega_1+\tilde\varpi)t} \sum_{p=0,\pm 1}a_{r-2ps}^0
e^{i2\pi (-2pt+\frac{j[p]}{N})}+O(\epsilon^4),$$ 
hence
$$
  \ddot h_j=e^{i(\omega_1+\tilde\varpi)t}\sum_{p=0,\pm 1}
  \left(
    \begin{array}[c]{c}
      a_{r-2ps}^0 e^{i2\pi (-2pt+\frac{j[p]}{N})} \times\\
      \left[-(\omega_1+\tilde\varpi)^2+8\pi
        p(\omega_1+\tilde\varpi)-16\pi^2p^2\right]
    \end{array}
  \right)
  +O(\epsilon^4),
$$
that is
$$e^{-i(\omega_1+\tilde\varpi)t}\ddot
h_j=-(A_0+\alpha\epsilon^2)(\omega_1+\gamma\epsilon^2)^2
e^{i2\pi\frac{j}{N}}$$  
$$+\sum_{p=\pm 1}A_{2p}\left[-(\omega_1+\gamma\epsilon^2)^2+8\pi
  p(\omega_1+\gamma\epsilon^2)-16\pi^2p^2\right]e^{i2\pi
  (-2pt+\frac{j[p]}{N})} \epsilon^2+O(\epsilon^4),$$ 
or
\begin{eqnarray*}
  e^{-i(\omega_1+\tilde\varpi)t}\ddot h_j
  &=&-\omega_1^2A_0e^{i2\pi\frac{j}{N}}+\\
  &&\left[
    \begin{array}[c]{l}
      -\omega_1(2A_0\gamma+\omega_1\alpha)e^{i2\pi\frac{j}{N}}+ \\
      \sum_{p=\pm 1}A_{2p}\left[-\omega_1^2+8\pi
        p\omega_1-16\pi^2p^2\right] e^{i2\pi (-2pt+\frac{j[p]}{N})}
    \end{array}\right] \epsilon^2+ \\
  &&O(\epsilon^4),
\end{eqnarray*}
Finally, as
$$e^{-i(\omega_1+\tilde\varpi)t}(h_l-h_j)=
(A_0+\alpha\epsilon^2)\vec r_{lj}+ \sum_{p=\pm
  1}A_{2p}\epsilon^2e^{-i4\pi pt} \vec r_{l[p]j[p]}+O(\epsilon^4),$$
the horizontal part of the equations splits into the 0-th order 
equation, which is nothing but the equation satisfied by the relative
equilibrium:
$$-\omega_1^2A_0e^{i2\pi\frac{j}{N}}=
\sum_{l\not=j}\frac{A_0}{(A_0\rho_{jl})^3}\vec r_{lj}\;,$$ 
and the second order equation
$$-\omega_1(2A_0\gamma+\omega_1\alpha)e^{i2\pi\frac{j}{N}}
+\sum_{p=\pm 1}A_{2p}\left[-\omega_1^2+ 8\pi
  p\omega_1-16\pi^2p^2\right] e^{i2\pi (-2pt+\frac{j[p]}{N})} =$$ 
\begin{eqnarray*}
  &&\sum_{l\not=j}A_{lj}^{-3}\left(\alpha\vec r_{lj}+\sum_{p=\pm
      1}A_{2p} e^{-i4\pi pt}\vec r_{l[p]j[p]}\right)+\\
  &&-3A_0\sum_{l\not=j}A_{jl}^{-4}\left(
    \begin{array}[c]{l}
    \alpha+\sum_{p=\pm 1} A_{2p}\rho_{j[p]l[p]}\cos
    4\pi(pt+\Theta_{jl;p})+\\
    B_{jl}\left(1-\cos 4\pi(t+k\eta\frac{j+l}{2N})\right)
    \end{array}
  \right) \vec r_{lj}. 
\end{eqnarray*}
We will call $(H_j)$ this equation. 

\smallskip \noindent Now, because of the symmetry ansatz, the $N$
equations $(H_j)$ are equivalent to one of them, for example $(H_0)$,
which is of the following form
$$U+Ve^{i4\pi t}+We^{-i4\pi t}=0,$$
where the complex numbers $U, V,W$ are affine functions of the 4 real
unknowns
$$\alpha,\; A_{2},\; A_{-2},\; \gamma.$$ Moreover, it turns out that,
because of the invariance of the expressions $A_{0l},B_{0l},\cdots$
under the change of $l$ into $-l$, the coefficients of $U,V,W$ are
indeed real.  As $(H_0)$ has to be satisfied for all values of $t$, it
is equivalent to the three real affine equations  
$$U=V=W=0.$$

\paragraph{Vertical equations} At the order of approximation
$O(\epsilon^4)$, we have
$$z_j(t)=\epsilon(\cos 2\pi(t+k\eta\frac{j}{N})+C_3\epsilon^3 \cos
6\pi(t+k\eta\frac{j}{N})+O(\epsilon^5),$$ 
hence
$$z_l-z_j=\epsilon\left[u_{jl}\sin 2\pi(t+k\eta\frac{j+l}{2N})+
  C_3v_{jl}\epsilon^2\sin
  6\pi(t+k\eta\frac{j+l}{2N})+O(\epsilon^4)\right],$$ 
where
$$u_{jl}=-2\sin 2\pi(k\eta\frac{j-l}{2N}),\quad v_{jl}= -2\sin
6\pi(k\eta\frac{j-l}{2N}).$$ The vertical equation $(V_j)$ is obtained
by identifying the $\epsilon^2$-terms in the identity $\pmod
{O(\epsilon^4)}$
$$-4\pi^2\cos 2\pi(t+k\eta\frac{j}{N})-36\pi^2C_3\epsilon^2 \cos
6\pi(t+k\eta\frac{j}{N})=$$ 
$$\sum_{l\not=j}||x_l-x_j||^{-3}
\left(u_{jl}\sin 2\pi(t+k\eta\frac{j+l}{2N})+C_3v_{jl}\epsilon^2\sin
  6\pi(t+k\eta\frac{j+l}{2N})+O(\epsilon^4)\right)$$ The
identification of the terms of order 0 give the vertical variational
equation (VVE) which is already satisfied by $z_j(t)=\cos
2\pi(t+k\eta\frac{j}{N})$.

\noindent The identification of the terms of order 2 gives the two
remaining relations beween the five unknowns
$\alpha,A_2,A_{-2},C_3,\gamma$. They are of the form
$$\Re(X_je^{i2\pi t})=0,\quad \Re(Y_je^{i6\pi t})=0,$$
where $X_j$ is an affine function of $\alpha,A_2,A_{-2}$ and $Y_j$ is
an affine function of $A_2,A_{-2},C_3$. As above, the symmetry ansatz
implies that the $N$ equations $(V_j)$ are equivalent to one of them,
for example $(V_0)$ and the invariance of the expressions
$A_{0l},B_{0l},\cdots$ under the change of $l$ into $-l$ implies that
$X_0$ and $Y_0$ are real, hence that $(V_0)$ reduces to exactly two
real equations. 


\paragraph{Identification of coefficients} Finally, we have 5 real
equations which are affine in the 5
unknowns $$\alpha,A_2,A_{-2},C_3,\gamma.$$ They can be solved in the
following order (assuming non degeneracies which will prove true in
the cases investigated below):

-- In $(H_0)$ the term in $\epsilon^0$ depends only on $A_0$, which it
determines. 

-- In $(H_0)$ the term $U$ in $\epsilon^2$ constant with respect to
time depends only on $\alpha$ and $\gamma$ and can be used to
eliminate $\alpha$.

-- In $(H_0)$ the coefficients $V$ and $W$ of $\epsilon^2 e^{\pm i4\pi
  t}$ allow to eliminate $A_{\pm 2}$. (For the Eight families, it can
help to remember or it can be checked, that $a_0^0=A_2\epsilon^2 +
... =0$.)

-- In $(V_0)$, the coefficient of $\epsilon^2 e^{i6\pi t}$ (after
simplification by $\epsilon$) allows to eliminate $C_3$. (In the case
of the $P_{12}$ family, $C_3=0$.)

-- In $(V_0)$, the coefficient of $\epsilon^2 e^{i2\pi t}$ eventually
allows to compute $\gamma$. 


\paragraph{Remark} The fact that, in the rotating frame, the
horizontal period is half the vertical one follows from the invariance
of the problem under the symmetry with respect to the horizontal plane
(materialized in the action of $\beta$). This fact accounts for the
generic aspect displayed by the solutions in a frame which accompanies
the rotation of the regular $N$-gon, as exemplified on
figure~\ref{fig:halo}. Notice that the same type of behaviour, with
the same explanation, is observed in the restricted problem for the
vertical solutions originating from the Lagrange points (see
\cite{Za}).

\subsection{Torsion of the first cases}

Recall that we are computing the torsion of the Lyapunov families
which bifurcate at $\varpi = -\frac{r}{s}\omega_k + \omega_1$. Since
$\omega_1>0$, $\gamma$ is necessarily $\geq 0$.

\paragraph{Three bodies}

\subparagraph{The family of rotated Lagrange solutions ($S_2(3,1,-1)$)} 
$$\gamma = 0 \quad \mbox{as expected.}$$

\subparagraph{The $P_{12}$-family ($S_2(3,1,-1)$)} 
$\gamma = \displaystyle\frac{12}{19} \left( 6\pi^7 \right)^{1/3}
\simeq 16.589$

\noindent This value is consistent with Marchal's
computation~\cite{Ma1}.  Indeed, writing with primes quantities
introduced in his book, he finds that the frequency shift of the
$P_{12}$-family is 
$$\frac{\alpha'}{2\pi} + h.o.t. = \frac{3}{19} c_1'^2 + h.o.t.,$$
to be compared with our $\gamma \epsilon^2$. But $\omega_1' = 1$ while
$\omega_1 = 2\pi$. Hence
$$2 \pi \frac{3}{19} c_1'^2 = \gamma\epsilon^2$$
(that Marchal's masses are not the same as ours does not come into
play in this equality between frequencies). Besides, his $c_1'$ is the
amplitude of oscillations of $z_1-z_2$ at the first order and the
edges of the equilateral triangle he considers have length $1$. On the
other hand, our $\epsilon$ is the amplitude of oscillations of $z_1$
and we consider a triangle whose edges have length $a_0\sqrt 3$, with
$a_0^{-3} = 4\sqrt{3}\pi^2$. Hence
$$\left(c_1'/{\scriptstyle
    \frac{1}{\sqrt{3}}}\right):{\scriptstyle \frac{1}{\sqrt{3}}} =
\epsilon : a_0,$$
whence the above expression of $\gamma$.

\paragraph{Four bodies}

\subparagraph{The 4-body, 3-lobe chain family ($S_2(4,1,-1)$)}

$$\gamma = \frac{48\, \pi^{7/3}}{41 \times 2^{1/3}7^{2/3}}
\left( 9\sqrt 2 -11 \right) \left( 2\sqrt2 +1 \right)^{2/3} > 9$$

\subparagraph{The 4-body Hip-Hop family ($S_1(4,2,\pm 1)$)}

$$\gamma = \frac{4692\;\pi^{7/3}}{2446096\times 2^{1/4} \sqrt{2\sqrt{2} +
    1}} \left(440 \sqrt 2 + 981\right) > 19$$

\paragraph{Five bodies}

\subparagraph{The 5-body, 4-lobe chain family ($S_4(5,1,-1)$)} 

$$\gamma = {{6\,2^{{{1}\over{3}}}\,\left(17995-7823\,\sqrt{5}\right)\,\pi^{{{7
 }\over{3}}}}\over{5981 \times 5^{{{1}\over{3}}}}} > 5.$$

\subparagraph{The 5-body Eight family ($S_2(5,2,-1)$)} 

$$\gamma =
\frac{\pi^{{{7}\over{3}}}(1+\sqrt{5})^{1/6}(1+3\sqrt{5})^{1/6}}{2^{11} 
  \times 5^{1/3} \times 11^{55/6} \; 761\, 809} \times $$
$$\left(
  \begin{array}[c]{l}
    \left(230356305630646272\, \sqrt{5}+954953942246092800\right)
    \sqrt{\sqrt{5}-1}\,\sqrt{3\,\sqrt{5}-1}\\
    +10427244637440119808\,\sqrt{5}-3532108049365632000
  \end{array}\right) > 19.
$$

\subparagraph{The 5-body, 3-lobe chain family ($S_3(5,2,1)$)} 

\begin{eqnarray*}
  \gamma &=& 
  \frac{\pi^{{{7}\over{3}}} \left(\sqrt{5}+1\right)^{{{1}\over{6}}}
    \left(3\,\sqrt{5}-1\right)^{{{1}\over{6}}}}{69129994912 \times
    5^{1/3} \times 11^{11/6}} \times \\
  &&\left(
  \begin{array}[c]{l}
    \left(522541527\, \sqrt{5} - 177004875\right) - \\
    \left(11543868\,\sqrt{5}+ 47855700\right) \,
    \sqrt{\sqrt{5}-1}\,\sqrt{3\,\sqrt{5}-1} 
  \end{array}\right) > 12
\end{eqnarray*}

\paragraph{Six bodies}

\subparagraph{The 6-body, 5-loop chain ($S_5(6,1,-1)$)}

$$\gamma = \frac{48 \, \pi^{7/3}}{781199 \times 6962^{1/3}}
(32634\sqrt{3}-39889) (15-4\sqrt{3})^{2/3} > 3$$

\subparagraph{A 6-body Hip-Hop ($S_1(6,2,-1)$)} $\gamma > 14$ (long expression)

\clearpage \bigskip\emph{Acknowledgments. } The authors warmly thank
Carles Sim{\'o} for his invaluable advice on numerical computations, and
Richard Montgomery for his comments and questions. Thanks also to
Michel H{\'e}non who made us aware of his researches~\cite{H} on families
of spatial periodic solutions of the restricted circular three-body
problem which bifurcate from planar solutions. In the same way as his
families were continued to the full problem, it would be natural to
continue ours by varying the masses.

\end{document}